\begin{document}
\newcommand {\TT}{\mathbb{T}}
\newcommand {\Ind}{\mathbbm{1}}
\newcommand{\bcc}{\color{magenta}}
\newcommand{\bco}{\color{olive}}
\newcommand{\bcb}{\color{blue}}
\newcommand {\cP}{\mbox{${\cal P}$}}
\newcommand {\cS}{\mbox{${\cal S}$}}
\newcommand {\cO}{\mbox{${\cal O}$}}
\newcommand {\w}{\mbox{${\omega}$}}
\newcommand {\ep}{\xi}
\newcommand{\half}{\frac{1}{2}}
\newcommand{\ti}[1]{\tilde{#1}}
\newtheorem{stat}{Statement}
\newtheorem{examp}[stat]{Example}
\newtheorem{assump}{Assumption}[section]
\newtheorem{decth}[stat]{Theorem}
\newtheorem{prop}[stat]{Proposition}
\newtheorem{cor}[stat]{Corollary}
\newtheorem{thm}[stat]{Theorem}
\newtheorem{lemma}[stat]{Lemma}
\newtheorem{remark}[stat]{Remark}
\newtheorem{def1}{Definition}[section]

\title{
Many Greedy Cleaners in a Poisson Environment}
\author{Sergey Foss\\Heriot-Watt University and \\ Novosibirsk State University\footnote{Email address: s.foss$@$hw.ac.uk. Research of S.~Foss was supported by EPFL visiting fellowship and 
by RSF research grant 17-11-01173}\\ \and Thomas Mountford\\ \'{E}cole Polytechnique F\'{e}d\'{e}ral de Lausanne\footnote{Email address: thomas.mountford$@$epfl.ch. 
}}


\maketitle

\begin{abstract}
We introduce a new ``greedy cleaning'' model where
a star-like state space (containing $N$ halflines connected by the origin) is covered by a homogeneous Poisson process of ``dust particles'', and $N^{\alpha}$  cleaners/workers proceed with cleaning in a ``greedy'' manner: each worker chooses the closest particle next.
Assuming $\alpha \in (0,1)$, we analyse the asymptotic behaviour of the workers, as $N\to\infty$. We show that eventually all of them escape to infinity and that the way how do they do it depends on the value of $\alpha$. 
\end{abstract}

Keywords: Greedy cleaning, greedy service, Poisson dust, star-like space, many workers.

\section{Introduction}

Greedy cleaning models are known for a long time, and there is an increasing  
interest to the area within the recent years. An overview on the topic
may be found in \cite{BFL}.
\vspace{0.2cm}

\noindent
The broadly known ``greedy cleaning'' problem may be presented as follows. There is given an unbounded connected closed subset ${\cal X}$ of Euclidean space, having infinite Lebesgue measure. At time $t=0$, countably many ``dust" particles are placed there, as points of a Poisson process that has a constant intensity with respect to a natural measure associated with the state space. A single
 worker starts from a fixed location and removes these particles one-by-one as follows. There is another independent rate-1 Poisson process on the ``time'' halfline and, for $i=1,2,\ldots$, the worker waits for the $i$'th ring for exponential-1
random time $\xi_i$, then moves/jumps to a new (the $i$'th) particle,
instantaneously removes it and stays at this location until the next ring. 
So, by time $S_n=\xi_1+\ldots + \xi_n$, the worker cleans out $n$ particles from the space. We are interested in the following ``greedy-type'' dynamics:
each time the worker chooses the closest particle next. 
Then the basic question is: whether will the space ${\cal X}$ be cleaned from all particles by time $\infty$ or not?
Since the total number of dust particles is denumerable, we may arbitrarily numerate them and denote by $T_k$ the time of particle $k$ removal, so $T_k=S_n$ for some $n$ if particle $k$ is eventually removed and 
$T_k=\infty$,  otherwise.
Then one can formulate a more precise question: what is the probability ${\mathbf P} (T_k<\infty, \ \mbox{for all} \ k)$
and when, in particular, this probability equals 0 or 1?
\vspace{0.2cm}

\noindent
Note that in the case of a single worker the distribution of inter-ring  times does not play any role and has been introduced for clarity of the process above. However, its exponentiality will play an important role later on in this paper. 
\vspace{0.2cm}

\noindent
The introduced model may be viewed as an example of a {\it deterministic} walk in a {\it random} environment, which may be opposed to {\it random} walks in either {\it deterministic} or {\it random} environments.
\vspace{0.2cm}

\noindent
It appears that the formulated problem is either relatively easy or very hard to solve, depending on the state space. The case of a real line, ${\cal X} = {\cal R}$ is easy, and one can use the 0-1-type laws to obtain that, for $X(t)$ being the location of the server at time $t$,
\begin{align*}
{\mathbf P}(X(t) \to\infty) = {\mathbf P}(X(t)\to -\infty) =1/2  \  (\mbox{due to the space symmetry}),
\end{align*}
for any initial $X(0)$, and, moreover, there is known the explicit distribution of 
${\mathbf P} (\inf_t X(t) \in \cdot \ | \  X(t)\to\infty)$ and of
related characteristics (see \cite{G1}). The model becomes more complex when the Poisson process on the line is space-inhomogeneous.
The case where its rate is a function of the distance from the origin has been analysed in \cite{GT}, where the authors showed that, depending on the rate properties, either the whole space is cleaned out from the dust or only a half of it. 
A similar problem has been analysed
in the case where the state space contains two parallel lines, see \cite{G2}.
In \cite{RST}, the authors analysed a version of the original model of a homogeneous Poisson process on the line, but assume in addition that  each dust point needs independently a random number of the worker's visits (either 1, with probability $p$, or 2, with probability $1-p$) to be cleaned out. If $p=1$,
then this is the original model, and only half a line is eventually cleaned out. However, as it is shown in \cite{RST}, for any value $p<1$,  the whole
line is eventually cleaned out from the dust.
\vspace{0.2cm}

\noindent
In the 2D case, an answer to the key question is unknown, for any reasonable variant of the state space -- either the whole plane ${\cal X}={\cal R}^2$ or a cone or
a slab. There have been several attempts to solve the problem, both analytically and/or numerically. In particular, A.~Holroyd \cite{H} has run a number of simulations that do not conclusively support either answer to the question. 
There are many more advanced problems, related to the trajectory of the worker and, in particular, to hitting
times of certain areas of the state space (see again \cite{BFL}). 
\vspace{0.2cm}

\noindent
There is a belief that, by analogy to a random walk,  in dimensions 3 or more it should typically be ${\mathbf P} (T_k<\infty \ \mbox{for all} \ k)=0$. However, we are unaware of any rigorous results here.
\vspace{0.2cm}

\noindent
There is another class of models (we call them ``greedy service''  models) which is close in spirit to our ``greedy cleaning'' models.  In a greedy service model, there are again (one or more )``greedy'' workers/servers that clean/serve the dust, and, in addition, there is an independent temporal-spatial Poisson (or, more generally, renewal) input process of dust particles. The main problems here are to establish stability of a model (in the case of a bounded state space) or to analyse the asymptotic trajectory(ies) of the worker(s) (in the case of an unbounded state space). The first stability conjecture for  continuous circle has been formulated in \cite{CG}, see further \cite{RFK} for a survey on the subject. A substantial progress in understanding discrete-space models has been obtained
in the 90's, see \cite{FL1}, \cite{Sch}, \cite{FL2}, \cite{KM} and references therein. Within the last decade, a number of new results have been obtained for continuous-space models, see \cite{R}, \cite{LU}, \cite{FRS}, and finally the long-standing initial 
conjecture from \cite{CG} has been proven in \cite{RS} in the Markovian case, while there are still open questions in a broader generality.
\vspace{0.2cm}

\noindent
One can place several workers to clean the space. Greedy service models with 2 and with 3 workers on the positive halfline
have been analysed by Schmidt \cite{S}. A model with any number of workers on the
positive halfline and its further extensions are considered in
\cite{FM}. 
\vspace{0.2cm}

\noindent
It appears that there is a significant progress in the studies of greedy
service mechanisms, while only a little has been found in the greedy cleaning models.
\vspace{0.2cm}

\noindent
This paper contributes to the latter direction.
We introduce a new greedy cleaning model with several workers and with a ``star-like'' state space, which
is a union of several halflines connected by the origin, and consider its asymptotic behaviour when both the number of workers and the number of halflines tend to infinity, with the latter to be significantly bigger than the former. 
We can prove that, with probability 1, each worker has eventually to choose a halfline on which they will stay.  It is also clear that if the number of halflines is very large compared to the number of workers then the chance of two workers having the same halfline is small, while if the number of workers is strictly greater than the number of halflines then the pigeon principle demands that at least one line is shared.  
So the question arises as to how many workers are needed, as a function of the number of halflines for there to be a good chance of ``doubling up".  We stress here that, as far as we can see, given $N$, the number of halflines, it is not clear that the probability of "doubling up" is monotone in the number of workers.  Some monotonicity question are discussed in the appendix.
This question can be extended to considering when it is possible for $k$ workers to have the same ultimate direction.
\vspace{0.2cm}

\noindent
A star-like state space is a very natural object. Such topological structures  have been proven to be historically efficient (``All roads  lead to Rome''!). They naturally appear  in many areas: in (tele)communication networks (like call centres, multilayer networks), biology, physics, etc.
\vspace{0.2cm}

\noindent
 The rest of the paper is organised as follows.
 In Section 2 we introduce the model and formulate our main results. Then in Section 3 we obtain a number of preliminary results for the model with many halflines. In Section 4, we deal with results on times and prove the first statement of Theorem \ref{th1}.
Then in Section 5 we prove the second statement of Theorem \ref{th1} and
in Section 6 the last statement. In Section 7 we comment on a possible proof of Theorem
\ref{th2}. Section 8 contains a number of results for an auxiliary model with a single halfline. 
 In Appendix, we collect a few monotonicity properties of our model.

\section{Model Description and Main Results}

We consider a star-like state space: there are $N$ halflines that start from the origin in different directions. So the state space is ${\mathcal X}=\cup_{i=1}^N {\cal R}^+_i$ where ${\cal R}_i^+$ is the $i$th
halfline, and all halflines have a single common point (the origin).
We equip $\mathcal{X}$ with the following $L_1$-type distance:  for two points from ${\mathcal X}$, with one at distance $x$ from the origin and the other at distance $y$, the distance $d$ between them
is equal to $|x-y|$ if they lie on the same halfline, and  $d=x+y$ if the points belong to different halflines.
\vspace{0.2cm}

\noindent
Each halfline ${\cal R}_i$ is covered by a homogeneous rate-1 Poisson process
of ``dust" particles ${\cal K}_i$, and these processes are mutually independent. So ${\cal K} = \cup_{i=1}^N {\cal K}_i$ is a homogeneous Poisson process on the state space ${\cal X}$.  
In the case where a specific halfline ${\cal R}_i^+$ is being discussed, we may tacitly identify it with ${\cal R}^+$, the real halfline and, in particular, identify the point on ${\cal R}_i^+$ at distance $y$ from the origin with the positive value $y$.
\vspace{0.2cm}

\noindent
Let $0< \alpha <1$. We defer discussions of $ \alpha \geq 1$ for a companion article. 
We assume there are $N^{\alpha}$ ``workers'' that clean
the space from the dust (to be more precise, $[N^{\alpha}]$ workers, where $[x]$ is the integer part of number $x$ but in the following where refering to integer quantities we will drop the square brackets). Initially (at time $0$) all the workers are 
 located 
at the origin. Each worker $j$ has his own clock process which is represented by an independent rate-1 Poisson process $M_j(t)$. When $j$th clock rings,
worker $j$ instantaneously jumps to the closest existing dust
particle, removes it and stays at that location until its clock rings again.  
\vspace{0.2cm}

\noindent
We consider asymptotic dynamic properties of the model, as $N$ grows to infinity.
\vspace{0.2cm}

\noindent
We use the following terminology:
a jump of a worker is an {\it advance} (along a halfline) if this is either
a jump from the origin to the closest dust particle or a jump
from one to another dust particle on the same halfline. Otherwise,
this is a {\it skip} (from one halfline to another), this is a jump from (a dust particle on) one halfline
to (a dust particle on) another. In the case of a skip, we may also say
that  a worker {\it returns to}, or {\it passes the origin}.
  \vspace{0.2cm}

\noindent
We enumerate the lines $l=1,2,\ldots,N$ and the workers
$w=1,2,\ldots,N^{\alpha}$ and let $l_w(t)=i$ if worker $w$ is located on line $i$
at time $t$ (we let $l_w(t)=0$ if the worker is still at the origin at time $t$) and $d_w(t)$ its distance
from the origin at time $t$. 
\vspace{0.2cm}

\noindent
Let $\rho_t$ be the distance from the origin to the closest existing dust particle at time $t$. Let
$\sigma_x$ be the first time when some worker, say $w$, skips from
one halfline to a dust point at another halfline at a distance at least
$x$ from the origin. Clearly, $\sigma_x \ge \sup \{t: \rho_t \le x\}$ a.s. 
\vspace{0.2cm}

\noindent
For $t\ge 0$, introduce event $A_{2}^{(t)} = \{ l_{w_1}(s) \ne l_{w_2}(s), \ \mbox{for all} \ 
w_1\ne w_2 \ \mbox{and all}$ $s\ge t \}$, so the event occurs if all workers are located at different lines at all times $s$ starting from time $t$. 
Then let  $A_2 = \cup_{t\ge 0} A_{2}^{(t)}$  be the event that eventually all workers stay at different halflines. Further, let
$\theta = \theta_N$ be the time instant of the very last skip (of any worker).
\vspace{0.2cm}

\noindent
Our main results are stated in the following two theorems.

\begin{thm}\label{th1}
(I) For any $\alpha \in (0,1)$ and $x>0$, 
\begin{align}\label{eq:111}
{\mathbf P} (\sigma_x = \infty ) \to 1, \ \mbox{as} \ N\to\infty.
\end{align}
Further, for any $\varepsilon >0$,
\begin{align}\label{eq:1111}
{\mathbf P} (\theta_N \le N^{1-\alpha+\varepsilon} \to 1, \ \mbox{as} \ N\to\infty.
\end{align}
(II) If $2/3 < \alpha < 1$, then 
\begin{align}\label{eq:112}
{\mathbf P}(A_{2})\to 0, \ \mbox{as} \ N\to\infty.
\end{align}
(III) If $0 < \alpha <2/3$, then
\begin{align}\label{eq:113}
{\mathbf P}(A_{2})\to 1, \ \mbox{as} \ N\to\infty.
\end{align}
\end{thm}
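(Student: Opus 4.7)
The plan is to reduce ${\mathbf P}(A_2^c)$ to a collision event at a deterministic time $T$, apply a union bound over pairs of workers, and then control the pair-collision probability by exploiting an exclusion (negative-correlation) mechanism built into the dynamics.

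\emph{Reduction and union bound.} By (\ref{eq:1111}), the last-skip time satisfies $\theta_N \le T := N^{1-\alpha+\varepsilon}$ with probability tending to one for any fixed $\varepsilon > 0$. On this event no worker ever skips after time $T$, so each worker is permanently pinned to the halfline it occupies at $T$, and $A_2^c$ differs from $\{\exists\, w_1\ne w_2:\, l_{w_1}(T)=l_{w_2}(T)\}$ by a set of probability $o(1)$. Exchangeability over the workers now yields
\begin{align*}
{\mathbf P}(A_2^c) \;\le\; o(1) + \binom{N^\alpha}{2}\, q_N,
\qquad q_N := {\mathbf P}(l_1(T) = l_2(T)).
\end{align*}
Since $\binom{N^\alpha}{2} = \Theta(N^{2\alpha})$, it is enough to prove a uniform bound of the form $q_N = o(N^{-4/3})$ in order to cover all $\alpha < 2/3$.

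\emph{The pair-collision estimate.} The naive bound $q_N = O(1/N)$ coming from marginal uniformity of $l_w(T)$ (by symmetry over the $N$ halflines) only handles $\alpha < 1/2$; the improvement exploits that skip destinations are negatively correlated. In the pure-skip approximation the destinations of successive skips are the line-labels of the order statistics (by distance to the origin) of the initial Poisson dust, and by the splitting property these form an i.i.d.\ uniform sequence on $\{1,\ldots,N\}$; in particular, as long as the cumulative number of skips remains below $\sim N$, every skip lands on a halfline not currently occupied by any worker. For two workers to settle on the same halfline there must therefore be a temporal coincidence in which a second worker skips to a halfline already occupied (or only recently vacated) by another, and both then fail to skip off. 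The single-halfline auxiliary results of Section~8 control the second ingredient by showing that of two workers simultaneously present on a single halfline one is very likely to leave. Combining the smallness of the coincidence window with this suppression factor yields the needed estimate on $q_N$.

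\emph{Main obstacle.} The delicate step is the quantitative form of the exclusion estimate: one must carefully track the size of the ``coincidence window'' during which both workers' last skips to the same halfline can occur and estimate the conditional probability that both workers subsequently remain there. The exponent $4/3$ needed --- and hence the precise threshold $\alpha = 2/3$ --- emerges from balancing the $N^{2\alpha}$ worker pairs against a suppression factor of order $N^{-1/3}$ beyond the baseline $1/N$. This bookkeeping requires a tight control of the residual Poisson structure of the dust at time $T$ together with the multi-worker single-halfline dynamics from Section~8, and is the step most sensitive to the value of $\alpha$.
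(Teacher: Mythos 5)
Your proposal covers only part (III) of the theorem, and even there it rests on unproven inputs. You invoke \eqref{eq:1111} as your starting point, but \eqref{eq:1111} is itself part of the statement; the paper has to earn it through the machinery of Sections 3--4 (Lemmas \ref{escape}, \ref{happytostay}, \ref{goodrate} and Theorem \ref{propsigma}), whose central output is the identification of the settlement scale $e^{-c\sqrt{\log N}}$ with $c=\sqrt{2(1-\alpha)\log 2}$ for the closest-dust process $\rho_t$ --- nothing in your sketch produces this. Part (II), that ${\mathbf P}(A_2)\to 0$ for $\alpha>2/3$, is absent entirely: it is not a union-bound statement at all but a constructive second-moment-type argument (the \emph{bold}/\emph{success} stopping times $s_i$ of Section 5 and the coupling with i.i.d.\ indicators in Lemma \ref{lln}), so your plan as written cannot be completed into a proof of the full theorem.

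For part (III) itself, the reduction to a collision at time $T=N^{1-\alpha+\varepsilon}$ plus a union bound is in the right spirit (the paper unions over workers $u$ and jump indices $j\le 2N^{1-\alpha+\varepsilon}$ rather than over pairs, which is cleaner), but your key estimate is asserted rather than derived, and the heuristic supporting it fails in two concrete ways. First, the claim that as long as the cumulative number of skips is below $\sim N$ ``every skip lands on a halfline not currently occupied'' is false: a skip goes to the closest existing dust particle, which can perfectly well lie on an occupied halfline --- this is precisely the collision mechanism the paper exploits in Section 5 (condition (II) of boldness). Second, the asserted $\alpha$-independent suppression factor $N^{-1/3}$ beyond the baseline $1/N$ has no derivation and misrepresents the true structure: the paper's bound per (worker, jump) opportunity is ${\mathbf P}(D(u,j)) < N^{-3(1-\alpha)+3\varepsilon}$ (Lemma \ref{probest}), the product of a window factor of order $N^{-(1-\alpha)}$ (Lemmas \ref{regul} and \ref{CJM}: a second worker can only follow $u$ if a dust particle lies within roughly $N^{-(1-\alpha)}$ of $u$) and a factor of order $N^{-2(1-\alpha)}$ that the environment fails to be $2$-blocking (Lemma \ref{block} and Corollary \ref{Willwriteit}, which rest on the single-halfline asymptotics $\log P_k(x)\sim -k\log^2 x/(2\log 2)$ of Section 8 evaluated at $x\approx e^{-c\sqrt{\log N}}$). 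Both exponents depend on $\alpha$, and the threshold $2/3$ emerges from $N^{\alpha}\cdot N^{1-\alpha}\cdot N^{-3(1-\alpha)} = N^{3\alpha-2}$, not from a fixed $N^{-4/3}$ pair bound; your target $q_N=o(N^{-4/3})$ happens to be numerically consistent with the resulting per-pair bound $N^{\alpha-2}$, but the route you describe to it --- a coincidence window plus ``one of two workers likely leaves'' --- is not quantified and cannot be without the blocking estimate and the scale $e^{-c\sqrt{\log N}}$. Your final paragraph essentially concedes that this step, which is the heart of the proof, is missing.
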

\vspace{0.2cm}

\noindent
The approach offered is sufficiently robust to permit generalization.  Let 
 $A_{m}^{(t)} = \{\mbox{for all} \ s\ge t \ \nexists \mbox{ distinct } w_1, w_2, \cdots w_m : l_{w_1}(s) = l_{w_2}(s) = \cdots = l_{w_m}(s) \
 \}$ and
 let  $A_m = \cup_{t\ge 0} A_{m}^{(t)}$ .  Following the line of the proof of Theorem \ref{th1},
 with natural minor changes, one can obtain
 
 \begin{thm}\label{th2}
(I) If $(2m-2)/(2m-1) < \alpha < 1$, then 
\begin{align}\label{eq:114}
{\mathbf P}(A_{m})\to 0, \ \mbox{as} \ N\to\infty.
\end{align}
(II) If $0 < \alpha <(2m-2)/(2m-1)$, then
\begin{align}\label{eq:115}
{\mathbf P}(A_{m})\to 1, \ \mbox{as} \ N\to\infty.
\end{align}
\end{thm}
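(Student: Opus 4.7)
The plan is to extend the $m=2$ arguments underlying Theorem \ref{th1}(II,III) to arbitrary $m$. Let $L_w \in \{1,\ldots,N\}$ denote the ultimate halfline of worker $w$; its almost sure existence is guaranteed by Theorem \ref{th1}(I) together with \eqref{eq:1111}. Introduce the counting variable
\[
Y_N \;=\; \#\Big\{\{w_1,\ldots,w_m\}\subset\{1,\ldots,N^\alpha\}:L_{w_1}=\cdots=L_{w_m}\Big\},
\]
so that $A_m^c = \{Y_N \ge 1\}$, and observe that by exchangeability in both workers and halflines,
\[
\mathbf{E}[Y_N] \;=\; \binom{N^\alpha}{m}\, q_m^{(N)}, \qquad q_m^{(N)} := \mathbf{P}(L_1 = L_2 = \cdots = L_m).
\]

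The technical heart of the proof is a matching two-sided estimate
\[
q_m^{(N)} \;\asymp\; N^{-(m-1)(2-\alpha)},
\]
generalizing the single-tuple bound implicit in Theorem \ref{th1}(II,III) for $m=2$. Given this,
\[
\mathbf{E}[Y_N] \;\asymp\; N^{(2m-1)\alpha - 2(m-1)} \;=\; N^{(2m-1)(\alpha - \alpha_c)}, \qquad \alpha_c := \tfrac{2m-2}{2m-1}.
\]
Part (II) of Theorem \ref{th2} then follows immediately from Markov's inequality, since $\mathbf{P}(A_m^c) \le \mathbf{E}[Y_N] \to 0$ when $\alpha < \alpha_c$. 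For part (I), where $\mathbf{E}[Y_N] \to \infty$, I would conclude $\mathbf{P}(Y_N \ge 1) \to 1$ by a second-moment bound: decomposing $\mathbf{E}[Y_N^2]$ as a sum over pairs of $m$-subsets $(T,T')$ indexed by $j = |T \cap T'| \in \{0,\ldots,m\}$, the $j=0$ contribution is asymptotic to $\mathbf{E}[Y_N]^2$, while each $j \ge 1$ term takes the form $\binom{N^\alpha}{m}\binom{m}{j}\binom{N^\alpha-m}{m-j}\, q_{2m-j}^{(N)}$ and is negligible by the same estimate $q_k^{(N)} \asymp N^{-(k-1)(2-\alpha)}$ applied at $k = 2m - j$.

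I would establish the key estimate by induction on $m$, with the inductive step adding one more worker to an existing group of $k$ settlers on a common halfline $\ell$. The incremental factor $N^{-(2-\alpha)} = N^{-1}\cdot N^{-(1-\alpha)}$ decomposes naturally: $N^{-1}$ for selecting the specific halfline $\ell$ (skip destinations being approximately uniform over the $N$ halflines), and $N^{-(1-\alpha)}$ for the newcomer to avoid all further skips during the remaining portion of the skip window $[0,\theta_N]$ of total length $\theta_N = N^{1-\alpha+o(1)}$ by \eqref{eq:1111}. The main obstacle is the verification of the inductive step: the $k$ workers already settled on $\ell$ bias a newcomer's dynamics, both by having removed near-origin dust from $\ell$ (reducing the skip-in probability) and by modifying the Poisson environment the newcomer must traverse on $\ell$ before settling. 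For $m$ fixed as $N \to \infty$, these corrections should be $o(1)$ multiplicatively, which one proves by a local analysis of the dust remaining on $\ell$ at times of order $\theta_N$, paralleling the single-line arguments of Section 8. This is where the ``natural minor changes'' alluded to after Theorem \ref{th2} become nontrivial bookkeeping; once the single-tuple estimate is in place with sufficient uniformity, the second-moment step for part (I) proceeds routinely.
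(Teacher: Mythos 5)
Your exponent bookkeeping is internally consistent: I verified that with $q_k^{(N)}\asymp N^{-(k-1)(2-\alpha)}$ the overlap-$j$ term in $\mathbf{E}[Y_N^2]$ is $N^{2(j-1)-\alpha(2j-1)}\cdot\mathbf{E}[Y_N]^2$, increasing in $j$ and negative in exponent at $j=m$ precisely when $\alpha>(2m-2)/(2m-1)$, so the thresholds match. Your part (II) is essentially the paper's argument reorganized: the first-moment bound $\mathbf{E}[Y_N]\le\binom{N^\alpha}{m}q_m^{(N)}$ is the Section 6 union bound over events $D(u,j)$, generalized as Section 7 sketches, with cost $N^{-(m-1)(1-\alpha)+O(\varepsilon)}$ for the cluster of $m-1$ extra dustparticles inside the $N^{-(1-\alpha)}$ window of Lemma \ref{regul} and $N^{-m(1-\alpha)+O(\varepsilon)}$ for the environment not being $m$-blocking (Corollary \ref{Willwriteit}).

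For part (I), however, you diverge from the paper, and your route has a genuine gap. The paper never proves a matching lower bound on $q_m^{(N)}$ nor any decorrelation between tuples: it runs an independent-trials construction instead, using the stopping times $s_i$ at which fresh halflines (supplied by Proposition \ref{bandersnatch}) are first visited, a modified notion of \emph{bold} ($m-1$ dustparticles beyond the first within the $N^{-(1-\alpha)}$ window, plus an environment from which $m$ workers escape with the probability given by Lemma \ref{lem3}), and the coupling with i.i.d.\ indicators of Lemma \ref{lln}; the strong Markov property at $s_i$ and the untouched Poisson environment beyond $x_i$ make the $\approx Ne^{-c'\sqrt{\log N}}$ trials effectively independent, so one success of probability $N^{-(2m-1)(1-\alpha)-O(\varepsilon)}$ occurs with high probability and no second moment is needed. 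Your asserted $j=0$ factorization $\mathbf{E}[Y_N^2]\le(1+o(1))\mathbf{E}[Y_N]^2$ is exactly the correlation control this construction is engineered to avoid: the events $\{T\ \mbox{monochromatic}\}$ for disjoint $T$ are driven by rare local dust clusters and compete for the same pool of $N^\alpha$ workers and the same skip flow $\rho_t$, so proving their asymptotic independence head-on would essentially force you to rebuild the $s_i$/coupling machinery for pairs of tuples, at which point the second moment is superfluous. Moreover, your inductive factor attribution is wrong as a mechanism even though the totals come out right: ``$N^{-1}$ for selecting $\ell$ uniformly per skip'' cannot be sustained, since summed over the newcomer's $\approx N^{1-\alpha}$ skips it would yield $N^{-\alpha}$, not $N^{-1}$ --- the line $\ell$ is a feasible target only during the brief window in which its near-origin particle is the global minimum. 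The correct ledger is $N^{-(1-\alpha)}$ for an additional dustparticle inside the window of Lemma \ref{regul}, times $N^{-\alpha}$ for the joiner being the designated worker, times $N^{-(1-\alpha)}$ for the additional escape, the latter coming from Lemmas \ref{lem2}--\ref{lem3} at scale $x=e^{-c\sqrt{\log N}}$, where $\exp(-\log^2 x/2\log 2)=N^{-(1-\alpha)+o(1)}$; likewise your ``avoid all further skips over a window of length $\theta_N$'' gets $N^{-(1-\alpha)}$ for the wrong reason --- it is the Section 8 escape asymptotics, not a duration count. Repairing these two points (the lower bound on $q_m^{(N)}$ with uniformity, and the disjoint-tuple decorrelation) is where all the real work lies, and the paper's stopping-time scheme is the more economical way to do it.
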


\section{Preliminary results}

The purpose of this section is to provide elementary results on the environment of ``dust" particles.
\vspace{0.2cm}

\noindent
We introduce the filtration 
$\{\mathcal{G}_t\}_{t \geq 0}$ where 
\begin{align*}
\mathcal{G}_t = \sigma ((l_w(s),d_w(s)), s\le t, 1\le w\le N^{\alpha})
\end{align*} 
is the sigma-algebra generated 
by the moves of the $N^\alpha $ workers by time $t$ (we consider the process to be right continuous).  
\vspace{0.2cm}

\noindent
We now describe the conditional distribution of the Poisson processes on the halflines at times $t$ (or at stopping times $T$).  If a halfline $l={\cal R}_i$ is fixed, then we can classify the ``information"  (for $l$ given) by four types of jumps occurring at time $s$ from point $x$ to point $y$.  To this end we introduce a variable $D_\ell (s) $ defined according to the four cases as below:\\
1) a jump to $l$, this is either an advance along $l$ or a skip to $l$ from another halfline (this means that $y$ belongs to $l$ while $x$ may be either on $l$ or on any other halfline).  Here we get the information $D_l(s) = y$ (where $y$ identified with its distance from the origin), this means : ``no dust particles on $l$ within distance $y$ from the origin''. \\
2) an advance within $j \ne l$.  Here $D_l(s) \ = \ (y-2x)^+$ .\\
3) a skip from $x\in j \ne l $ to $y\in k \ne l$. Here $D_l(s) \ = \ y$ \\
4) a skip from $x\in l$ to $y\in j \ne l $.  Here $D_l(s) \ = \ y+2x$. 
\vspace{0.2cm}

\noindent
At time $t$, let $I_l(t) \ = \ max_{s \leq t} D_l(s) $, the noneffaced dust particles on the halflines conditional upon $\{I_t (l) \}_{l} $ are conditionally independent Poisson processes on the 
halflines with, for any halfline $l$,  rate one on $[I_l(t), \infty )$ and rate zero on $(0, I_l(t))$.
\vspace{0.2cm}

\noindent
In particular if $\tau $ is, say,  the $j$'th  jump time for a given worker, then the environment ``seen" from this worker on its halfline away from the origin, given $\mathcal{G}_\tau $, is simply a rate-one Poisson process.  (Similarly -- for other stopping times representing worker jumps).
\vspace{0.2cm}

\noindent
Note that the process
\begin{align*}
X(t):= \left( (l_w(t),d_w(t)), 1\le w \le N^{\alpha}; I_l(t), 1\le l \le N\right)
\end{align*}
is a continuous-time Markov process that possesses the strong Markov property.
\vspace{0.2cm}

\noindent
In this and subsequent sections, we will introduce various ``bad" events $F_i$.
Mostly, such an event can be described as $F_i = \{T_i \leq  r_i\}$ or $\{T_i < \infty \}$, for some fixed time $r_i$ and some stopping time $T_i$ (with respect to filtration $( \mathcal{G}_t)_{t \geq 0} )$.  When we will say (for time $t \ \leq r_i \ $) that $F_i $ ``is not happened by time $t$", we will mean that $T_i > t$.
\vspace{0.2cm}

\noindent
By the superposition theorem for Poisson processes, the initial distribution of the distances of dust particles  (from all halflines) to the origin 
constitutes a rate-$N$ Poisson process.  This high intensity process will have many useful law of large numbers properties.   We also wish to establish regularity properties holding on each of the $N$ halflines' environments.   In particular,  we wish to show that each halfline cannot have ``too many" dust particles close to the origin and that a certain large scale regularity may be assumed (see Proposition \ref{prop1} below).  
\vspace{0.2cm}

\noindent
According to the introduced dynamics, workers jump (either advance or skip) at Poisson times.  We can see that, with  a high probability, all workers  for a long time will only make skips after the first advance from the origin. The intuition behind this is that, for 
the first dust point on any halfline, the next dust point on this halfline is usually on the distance of order 1, 
while the closest point on the other halflines is of the order of two times the distance of the current dust particle from the origin, which will be small compared to the O(1) distance along it current halfline.  
However we will show that 
unless the dust environment is negligibly ``extreme'', a single isolated worker will stop skipping after having made $\log^3(N)$ consecutive advances.
\vspace{0.2cm}

\noindent
In our model with many workers acting on the same halflines things can be more complicated.
In principle a worker could travel far in a halfline but still return to the origin even if the dust environment is not extreme since a large number of other workers could move to this halfline.  As
a technical approach, we consider behaviour of workers until the random time when the position of the 
dust particle closest to the origin (over all halflines) becomes greater than 1.  In fact we will see in the following section that this time will be infinity with probability tending to one, as $N $ tends to infinity.  (See (i) of the statement of Theorem \ref{th1}.)
\vspace{0.2cm}

\noindent
Recall from Section 2 our definitions of $\rho_t $  and  
$\sigma_x$.  
 In particular, for times before $ \sigma_1 $, no worker will enter a halfline whose closest dust particle is at distance greater than one from the origin. The value of this is that,
as we already claimed, 
no worker will skip before time $ \sigma_1 $ after having made $\log^3N$ consecutive advances. The time $\sigma_1$ will be shown to be infinite (see Section 4), with probability tending to one, so this conclusion, holding up to time $ \sigma_1 $ will hold forever.
\vspace{0.2cm}

\noindent
The first two propositions concern the regularity of the dust particles on the halflines and the regularity of Poisson processes.  Since they rely on simple 
estimates on probabilities for extreme events for  Poisson processes on the halfline, we leave the proofs to the reader.
\begin{prop} \label{prop1}
The following events on dust environments for $N$ halflines have probability tending to zero as $N$ tends to infinity:\\
$ F_{0} := \{$initially, there exists a halfline ${\it l}$ so that, for some $y \geq \log^2N$, the number of dust points on ${\it l}$ within distance  $y$ from the origin, $M_l(y) $,  is either less than or equal to $y/2$
or greater than or equal to $ 2y\}$;\\
$F_{1} :=\{$initially, there exists a halfline ${\it l}$ with at least $\log N$ dust points within distance $2$ from the origin $\}$;
\\
$F_{2} :=\{$initially, there exists a halfline ${\it l}$ so that, for some $y \geq \log^{2} N$, the number of dust points in $]y, 5y/4[$ is outside of the interval $(y/8,y/2)$.
\end{prop}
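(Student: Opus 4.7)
My plan is to reduce all three assertions to standard Chernoff large-deviation estimates for Poisson random variables on a single halfline, combined with a union bound over the $N$ halflines. The Poisson processes on distinct halflines are i.i.d., so the only real task is to estimate the bad event on one halfline and verify that its probability is $o(1/N)$; the threshold $\log^2 N$ is chosen precisely so that this holds.

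For $F_1$ the computation is immediate: $M_l(2)$ is Poisson with mean $2$, so the Chernoff tail bound gives $\mathbf{P}(M_l(2) \geq \log N) \leq \exp(-c \log N \log\log N)$, and a union bound yields $\mathbf{P}(F_1) \leq N\exp(-c\log N\log\log N) \to 0$. For $F_0$, on a fixed halfline $M_l(y)$ is Poisson$(y)$, and Chernoff gives
$$\mathbf{P}(M_l(y) \notin (y/2, 2y)) \leq 2\exp(-cy)$$
for some absolute $c>0$. To handle the continuum of $y \geq \log^2 N$ at once, I would fix a small $\epsilon > 0$ and pass to the geometric grid $y_k = (1+\epsilon)^k \log^2 N$, $k \geq 0$. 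By monotonicity of $y \mapsto M_l(y)$, the occurrence of the bad event on any $y \in [y_k, y_{k+1}]$ forces either $M_l(y_k) \leq (1+\epsilon) y_k / 2$ or $M_l(y_{k+1}) \geq 2 y_{k+1}/(1+\epsilon)$; for $\epsilon$ small each is a constant-factor deviation of a Poisson from its mean, hence has probability at most $2\exp(-c' y_k)$. Summing the geometric series in $k$ and union-bounding over the $N$ halflines gives $\mathbf{P}(F_0) = O(N \exp(-c'\log^2 N)) \to 0$.

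The argument for $F_2$ proceeds in the same spirit: the number of dust points in $(y, 5y/4)$ is Poisson with mean $y/4$, and the required window $(y/8, y/2)$ again corresponds to a constant-factor deviation from that mean, yielding a single-$y$ bound of the form $\exp(-cy)$. The only extra care needed is in the passage from the grid to all $y$, since now both endpoints of the interval depend on $y$; one handles this by comparing the count on $(y, 5y/4)$ for $y \in [y_k, y_{k+1}]$ with the counts on slightly enlarged/shrunken intervals with grid endpoints, choosing $\epsilon$ small enough that these still represent constant-factor deviations.

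The whole argument is essentially bookkeeping, which is why the authors leave it to the reader. The only mildly subtle step, and the closest thing to an obstacle, is this continuum-to-grid passage: for $F_0$ it is trivial by monotonicity, and for $F_2$ it requires a slightly more careful interval comparison, but in each case the loss is absorbed into the constant in the exponent, leaving the decay rate $\exp(-\Omega(\log^2 N))$ intact, so the union bound over $N$ halflines still produces an $o(1)$ bound.
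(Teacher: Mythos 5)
Your proposal is correct and is exactly the argument the paper has in mind: the authors omit the proof, remarking only that it relies on ``simple estimates on probabilities for extreme events for Poisson processes on the halfline,'' which is precisely your Chernoff-plus-union-bound scheme, and your geometric grid $y_k=(1+\epsilon)^k\log^2 N$ (with monotonicity for $F_0$ and the enlarged/shrunken interval comparison for $F_2$) correctly supplies the continuum-to-grid step needed to make the single-$y$ bounds $e^{-cy}$ rigorous, leaving decay $N e^{-\Omega(\log^2 N)}\to 0$. No gaps; the computation for $F_1$, giving $N\exp(-c\log N\log\log N)\to 0$, is likewise fine.
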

\vspace{0.2cm}

\noindent
{\it Remark:} To reinforce our discussion of the way "event $A$ has not occurred by time $t$" is used, we say $F_0 $ has not occurred by time $t$ (where $t$ is possibly a stopping time), if there does not  exist an $l$ and  $y \geq \log^2N$ so that $N_l(t,y)$, the number of revealed points by time t of $l \cap (0,y),$ satisfies $N_l(t,y) \notin (y/2, 2y)$.
\vspace{0.2cm}

\noindent
For any given $N$, let $T^{(N)} = \{t_{1}^{(N)} < t_{2}^{(N)} < t_{3}^{(N)} < \ldots \}$ be the ordered distances from the origin to the initial dust particles on the state space. As we mentioned, they form a rate-$N$ Poisson process.

\begin{prop} \label{prop2}
For $T^{(N)}$ as above, the following events have probability tending to zero as $N$ tends to infinity:\\ 
$ F_3 :=\{$there exists $i \leq \frac{N}{\log N}$ so that 
$t_{i + N^{\alpha} }^{(N)}- t_{i}^{(N)}  \notin (\frac{1}{2 N^{1-\alpha}} , \  \frac{2}{N^{1-\alpha}} ) \}$.\\
$F_4 :=\{$there exists $i\in (N^{1/2}, N/\log N)$ so that $\vert \frac{Nt_i^{(N)}}{i} -1 \vert \ \geq \ 2/\log N\}.
$
\end{prop}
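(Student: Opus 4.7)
The plan is to reduce both bounds to standard concentration estimates for the Gamma distribution and then apply a union bound. Recall that the ordered distances $t_1^{(N)} < t_2^{(N)} < \ldots$ of a rate-$N$ Poisson process on the halfline satisfy $t_n^{(N)} \stackrel{d}{=} \frac{1}{N}\sum_{k=1}^n E_k$ where $E_1, E_2, \ldots$ are i.i.d. $\mathrm{Exp}(1)$. Equivalently, $Nt_n^{(N)}$ has a $\mathrm{Gamma}(n,1)$ distribution, and for any fixed $i$ the increment $t_{i+m}^{(N)} - t_i^{(N)}$ is independent of $t_i^{(N)}$ and has the same distribution as $t_m^{(N)}$. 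All estimates below come from the one-line Chernoff bound
\begin{equation*}
{\mathbf P}\left(\left|\frac{1}{n}\sum_{k=1}^n E_k - 1\right| \ge \delta\right) \le 2\exp(-c n \delta^2), \qquad 0<\delta<1/2,
\end{equation*}
with a universal $c>0$, obtained by optimising $\mathbf{E} e^{\lambda E_1}=1/(1-\lambda)$.

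For $F_3$, fix $i\le N/\log N$. The increment $t_{i+N^\alpha}^{(N)}-t_i^{(N)}$ equals $N^{\alpha-1}$ times $\frac{1}{N^\alpha}\sum_{k=1}^{N^\alpha}E_k$, so its mean is $N^{\alpha-1}=1/N^{1-\alpha}$. The bad event $\{t_{i+N^\alpha}^{(N)}-t_i^{(N)}\notin (1/(2N^{1-\alpha}),\,2/N^{1-\alpha})\}$ is exactly the deviation of this normalised sum from $1$ by at least $1/2$, which by the Chernoff bound has probability at most $2\exp(-cN^\alpha/4)$. A union bound over the at most $N/\log N$ indices gives
\begin{equation*}
{\mathbf P}(F_3) \le \frac{2N}{\log N}\exp(-cN^\alpha/4) \longrightarrow 0 \text{ as } N \to \infty.
\end{equation*}

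For $F_4$, fix $i\in(N^{1/2},N/\log N)$. Now $Nt_i^{(N)}/i$ is exactly the normalised sum $\frac{1}{i}\sum_{k=1}^i E_k$, with mean $1$. Applying the same Chernoff bound with $\delta=2/\log N$ (which is well below $1/2$ for $N$ large) yields
\begin{equation*}
{\mathbf P}\left(\left|\frac{Nt_i^{(N)}}{i}-1\right|\ge \frac{2}{\log N}\right) \le 2\exp\!\left(-\frac{4ci}{\log^2 N}\right) \le 2\exp\!\left(-\frac{4c N^{1/2}}{\log^2 N}\right).
\end{equation*}
A union bound over at most $N/\log N$ values of $i$ bounds ${\mathbf P}(F_4)$ by $2N\exp(-4cN^{1/2}/\log^2 N)/\log N$, which tends to $0$.

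There is no genuine obstacle here: both bounds are routine Gamma tail estimates combined with a union bound, and the ranges of $i$ in the statement are precisely chosen so that the exponential decay rate ($N^\alpha$ for $F_3$, $N^{1/2}/\log^2 N$ for $F_4$) dominates the polynomial loss $N/\log N$ from the union bound. The only mildly delicate point is the choice of the lower cutoff $N^{1/2}$ in $F_4$: for $i$ much smaller than $1/\delta^2=(\log N)^2/4$ the Chernoff bound degrades, which is exactly why small indices are excluded from the statement.
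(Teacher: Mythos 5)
Your proof is correct and is exactly the routine argument the paper intends: the paper gives no proof of Proposition 2, stating only that the events ``rely on simple estimates on probabilities for extreme events for Poisson processes on the halfline'' and leaving the details to the reader, and your Gamma/Chernoff-plus-union-bound computation is the standard way to fill this in. One cosmetic slip: you state the Chernoff inequality for $0<\delta<1/2$ but apply it to $F_3$ with deviation exactly $\delta=1/2$; either close the range to $0<\delta\le 1/2$ or invoke the general subexponential bound $2\exp\left(-cn\min(\delta,\delta^2)\right)$, which changes nothing in the conclusion.
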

\vspace{0.2cm}

\noindent
The next result is simple to prove but is useful in that it ensures a large supply of halflines which will be first visited at reasonably predictable times.
\begin{prop}  \label{bandersnatch}
For $ 0 < k_2 < k_1 < \infty $, let $W_N$ be the number of halflines whose closest to the origin dust point (at time $0$) lies in the interval
\begin{align*}
(  e^{- k_1 \sqrt{\log N} }, e^{- k_2 \sqrt{\log N} } ).
\end{align*}
As $N$ tends to infinity, 
\begin{align*}
    \frac{W_N}{N e^{- k_2 \sqrt{\log N} } }\to 1
\end{align*}
in probability.
\end{prop}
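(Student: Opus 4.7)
The plan is to recognise that $W_N$ is a simple binomial sum and to prove the claim by computing its mean and applying a Chebyshev bound. On each halfline $i$, let $X_i$ denote the distance from the origin to the closest initial dust particle. By the definition of the environment, the $X_i$ are i.i.d.\ Exp$(1)$ random variables, and
\begin{align*}
W_N \ = \ \sum_{i=1}^{N} \Ind\{X_i \in (a_N, b_N)\}, \qquad a_N := e^{-k_1 \sqrt{\log N}}, \ b_N := e^{-k_2 \sqrt{\log N}}.
\end{align*}
Hence $W_N \sim \text{Binomial}(N, p_N)$ with $p_N = e^{-a_N} - e^{-b_N}$.

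First I would estimate the mean. Since both $a_N$ and $b_N$ tend to $0$, a Taylor expansion gives
\begin{align*}
p_N \ = \ (1 - a_N + O(a_N^2)) - (1 - b_N + O(b_N^2)) \ = \ b_N - a_N + O(b_N^2).
\end{align*}
Because $k_2 < k_1$, we have $a_N / b_N = e^{-(k_1 - k_2)\sqrt{\log N}} \to 0$, and $b_N^2 / b_N = b_N \to 0$, so $p_N / b_N \to 1$. Consequently
\begin{align*}
\frac{\mathbf{E}[W_N]}{N b_N} \ = \ \frac{N p_N}{N e^{-k_2 \sqrt{\log N}}} \ \longrightarrow \ 1.
\end{align*}

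For the concentration step, note that $N b_N = \exp(\log N - k_2 \sqrt{\log N}) \to \infty$, so $\mathbf{E}[W_N] \to \infty$ as well. Since $W_N$ is binomial, $\mathrm{Var}(W_N) = N p_N (1 - p_N) \leq \mathbf{E}[W_N]$. For any fixed $\varepsilon > 0$, Chebyshev's inequality gives
\begin{align*}
\mathbf{P}\bigl( |W_N - \mathbf{E}[W_N]| > \varepsilon \mathbf{E}[W_N] \bigr) \ \leq \ \frac{1}{\varepsilon^2 \mathbf{E}[W_N]} \ \longrightarrow \ 0.
\end{align*}
Combining this with the asymptotic $\mathbf{E}[W_N]/(N e^{-k_2\sqrt{\log N}}) \to 1$ yields $W_N / (N e^{-k_2\sqrt{\log N}}) \to 1$ in probability, which is the desired conclusion.

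There is no serious obstacle here: the proposition is essentially a weak law of large numbers for a sum of independent indicators whose expected count $N e^{-k_2 \sqrt{\log N}}$ diverges. The only point requiring a touch of care is verifying that $a_N$ is negligible compared to $b_N$ (so that the mass of the interval is concentrated near $b_N$) and that the second-order terms in the expansion of $e^{-a_N} - e^{-b_N}$ are small, both of which follow immediately from $k_2 < k_1$ and $b_N \to 0$.
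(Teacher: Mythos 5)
Your proof is correct: the distances to the first dust point on the $N$ halflines are indeed i.i.d.\ Exp$(1)$, so $W_N$ is Binomial$(N, e^{-a_N}-e^{-b_N})$, your mean asymptotics $p_N \sim b_N$ (using $a_N/b_N = e^{-(k_1-k_2)\sqrt{\log N}} \to 0$ and $b_N \to 0$) are right, and since $Nb_N \to \infty$ the Chebyshev step closes the argument. The paper deliberately omits the proof of this proposition as ``simple to prove,'' and your binomial-mean-plus-Chebyshev argument is exactly the standard weak-law computation the authors intended, so there is nothing to compare against or correct.
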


\noindent
The next result addresses the worker process rather than merely the initial dust particle environment.  We wish to show that when the process is reasonably advanced a worker after a skip (occuring definitely before time $\sigma_1$) has a reasonable possibility of advancing $\log^3N$ times without skipping.
\begin{lemma} \label{escape}
\noindent Fix $k > 0$ and $ 0 < \varepsilon < k \wedge (1- \alpha ) / 2$.  Consider the worker/dust process on $N$
halflines and let  $t$ be a stopping time at which worker $w$ skips. 
 to a dust particle on the halfline denoted ${\it l}$ at the distance from the origin 
 $ x \in \left[ {e^{-k \sqrt{\log N}}},{e^{-(k - \varepsilon)\sqrt{\log N}}} \right]$. 

\noindent Let $A$ be the event that, after time $t$, the worker $w$ advances at least $\log^3N$ 
consequent jumps along ${\it l}$ without skipping and within $2\log^3N$ units of time.                                                                                                                                                                                                                                                                                                                                                                                                                                                                                                                                           
and that all other workers, who were on ${\it l}$ at the moment $t$ of the worker's arrival (call them ``{\it black}'' workers), skip for other halflines without having advanced along ${\it l}$ by time $t+log^2(N)$ and no further workers enter ${\it l}$.  Let $A(\sigma_1) $ be the event that event $A$ has not been ruled out by time $\sigma_1 $,  which means that the selected worker $w$ has not skipped up to this time and if it has jumped  $\log^3N$ times then it has done so within time  $2 \log^3N$  and every ``black'' worker is either continuing to remove its current dust particle or has left after that. 
Then, on the event $D(t,w)$,  
$$
{\mathbf P}( A ( \sigma_{\bco 1}) \backslash (  F_1  \cup F_3  \cup F_4  ) | \mathcal{G}_t ) 
\ge \frac{1}{( \log N + 1)^{2}} \hspace{0.2cm} \left(\frac{1}{N}\right)^{\frac{k^2 (1+\varepsilon / k)}{2\log 2}} - 2e^{- \log^3N},
$$
for all $N$ large enough.  
\end{lemma}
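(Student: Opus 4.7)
\textbf{Proof plan for Lemma \ref{escape}.}
My plan is to construct an explicit favourable scenario for worker $w$ and bound its probability from below using the Poisson conditioning described above. Given $\mathcal{G}_t$, the unrevealed dust on $l$ beyond position $x$ is a rate-$1$ Poisson process with iid $\mathrm{Exp}(1)$ gaps $Y_1,Y_2,\ldots$, independent of $w$'s rate-$1$ clock and of the dust environments on the other halflines. Writing $X_j:=x+Y_1+\cdots+Y_j$ for the would-be position of $w$ after $j$ advances, the central observation is that $Y_j<X_{j-1}$ alone forces the $j$-th jump to be an advance, since the nearest dust on any halfline other than $l$ is at distance at least $X_{j-1}$ from $w$ via the origin.

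The dominant contribution comes from a \emph{doubling phase}. Pick a small $\delta=\delta(N)>0$, for instance $\delta=1/\sqrt{\log N}$, and let $m=\lceil k\sqrt{\log N}/\log(2-\delta)\rceil$, so that on the event below $X_m\in[1,C]$ for an absolute constant $C=C(k)$. For each $j=1,\ldots,m$, I impose $Y_j\in((1-\delta)X_{j-1},X_{j-1})$; conditionally on $(Y_1,\ldots,Y_{j-1})$,
\[
\mathbf{P}\bigl(Y_j\in((1-\delta)X_{j-1},X_{j-1})\bigm|\mathcal{G}_t,Y_1,\ldots,Y_{j-1}\bigr)=e^{-X_{j-1}}\bigl(e^{\delta X_{j-1}}-1\bigr)\geq \delta X_{j-1}e^{-X_{j-1}},
\]
and the event itself forces $X_{j-1}\geq(2-\delta)^{j-1}x$. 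Chaining,
\[
\mathbf{P}(\mathrm{doubling})\;\geq\;\delta^m\,e^{-\sum_{j\leq m}X_{j-1}}\,x^m(2-\delta)^{m(m-1)/2}.
\]
Since $\sum X_{j-1}\leq 2X_m=O(1)$, the exponential factor is a positive constant; $\delta^m=e^{-O(\sqrt{\log N}\log\log N)}$ is subpolynomial in $N$; and expanding $1/\log(2-\delta)=(1/\log 2)(1+O(\delta))$ gives $x^m(2-\delta)^{m(m-1)/2}=N^{-k^2/(2\log 2)}\,e^{O(\sqrt{\log N})}$. Multiplying, one obtains $\mathbf{P}(\mathrm{doubling})\geq N^{-k^2(1+\varepsilon/k)/(2\log 2)}/(\log N+1)^2$ for all $N$ sufficiently large, after weighing the subpolynomial loss against the $\varepsilon/k$ slack.

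The remaining components of $A(\sigma_1)\setminus(F_1\cup F_3\cup F_4)$ contribute only lower-order corrections. In the \emph{extension phase} ($j=m+1,\ldots,\log^3N$), since $X_{j-1}\geq 1$ and grows at least linearly by the LLN for $\mathrm{Exp}(1)$-sums, each advance succeeds with probability at least $1-e^{-X_{j-1}}$, and a geometric-series bound shows the whole phase succeeds with probability bounded below by a positive constant. The timing requirement that $w$'s clock rings at least $\log^3N$ times in $[t,t+2\log^3N]$ is a Poisson concentration that produces the $-2e^{-\log^3N}$ correction of the statement. The \emph{black-worker} condition, using $F_1^c$ together with a structural bound (before $\sigma_1$, any worker on $l$ arrived by a skip to position $<1$ and has advanced by at most $O(1)$ since then), amounts to forcing $O(\log N)$ independent $\mathrm{Exp}(1)$ gaps to exceed $O(1)$ positions, contributing a polylogarithmic prefactor absorbed into the $(\log N+1)^{-2}$. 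Finally, the \emph{no-entry} condition follows from the first moment method: by $F_3^c$, skip destinations are nearly uniform among the $N$ halflines, so the expected number of new skips into $l$ in the window is at most $N^{\alpha}\cdot N^{-1}\cdot 2\log^3N\to 0$.

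The main obstacle is the careful bookkeeping of the doubling probability, verifying that every polylogarithmic and subpolynomial loss (from $\delta^m$, from replacing $2$ by $2-\delta$ in the combinatorial exponent, from the clock, from the black workers, and from the extension phase) can be simultaneously absorbed into the $(\log N+1)^{-2}$ prefactor and the $\varepsilon/k$ slack in the exponent. A secondary delicate point is the structural claim that black workers sit at $O(1)$ positions on $l$: this I would derive from the regularity propositions above, in particular Proposition \ref{prop1}, together with a bootstrap argument using that prior to $\sigma_1$ no worker could have skipped to a distant point.
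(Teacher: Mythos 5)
Your doubling-phase computation is sound, but note that it is essentially a re-derivation of the lower bound of Lemma \ref{lem1}, which the paper's proof of this lemma simply invokes (through the events $E_1$, $E_2$ of an isolated-particle configuration on $(x,10x/3]$ plus single-halfline escape); the exponent bookkeeping with the $\varepsilon/k$ slack checks out. The genuine gaps are in the two parts that are actually specific to the multi-worker setting. First, the black workers. Your mechanism (``forcing $O(\log N)$ independent $\mathrm{Exp}(1)$ gaps to exceed $O(1)$ positions'') does not correspond to anything in the model, and taken literally it would cost $e^{-c\log N}=N^{-c}$, a \emph{polynomial} factor that cannot be absorbed into $(\log N+1)^{-2}$. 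The point you must confront is clock order: in your doubling scenario the first fresh particle lies in $(x,2x)$, so if a black worker rings before $w$ does, that worker advances to it and event $A$ fails outright. (All black workers sit at positions below $x$ automatically, because the particle at $x$ still exists at time $t$, so no worker can ever have advanced past it; your proposed bootstrap via Proposition \ref{prop1} is unnecessary and the weaker ``$O(1)$ positions'' claim is useless here.) The paper pays exactly the prefactor $(\log N+1)^{-2}$ — valid on $F_1^c$, which caps the number of black workers at $\log N-1$ — for the event that $w$ makes the \emph{next two} moves, and arranges via $E_1$ that those two advances clear $l$ up to $10x/3$, after which every black worker's nearest remaining particle is off $l$ and it must skip, with no further probabilistic cost. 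Your outline contains the prefactor but attaches it to the wrong event, leaving the interleaving of clocks unhandled.

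Second, and more seriously, your ``no further workers enter $l$'' step fails. Workers do not skip to near-uniformly chosen halflines: a skip goes to the globally nearest dust particle, and $F_3$, $F_4$ control spacings of the order statistics of the rate-$N$ process, not destination uniformity, so the bound $N^{\alpha}\cdot N^{-1}\cdot 2\log^3 N$ has no justification. Worse, even if a first-moment bound of polynomial size $N^{\alpha-1}\log^3 N$ did hold, subtracting it additively destroys the lemma: the statement permits only a correction $2e^{-\log^3 N}$, and the lemma is applied in Section 4 with $k$ near $c=\sqrt{2(1-\alpha)\log 2}$, where the main term $N^{-k^2(1+\varepsilon/k)/(2\log 2)}(\log N+1)^{-2}$ is of order comparable to or smaller than $N^{-(1-\alpha)}$, i.e.\ comparable to or smaller than your error term, so the resulting lower bound can be vacuous exactly in the regime where the lemma is used. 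The paper instead makes no-entry \emph{deterministic}: on $E_1$ the nearest particle on $l$ is beyond $10x/3$, while outside the events (a), (b) and $F_3\cup F_4$ the closest-particle distance $\rho_s$ moves by at most $x/3$ during the $2\log^3N$ window, so any worker that skips finds a strictly closer particle off $l$; the only probabilistic corrections are the super-polynomially small clock large-deviation events, which is precisely where the $2e^{-\log^3 N}$ term comes from. Your proposal needs this deterministic shielding (or an equivalent), not a union/first-moment correction of polynomial size.
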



\noindent
\begin{proof}
We first note that the fact that worker $w$ skips to a site distance $x<1$ from the origin implies that $t < \sigma_1 $.  Given our definition of event $A$ and of event $F_1$, we can suppose that the number of workers present on halfline $l$ at time $t$ 
is bounded by $\log N -1$ at all times less than  $\sigma_1$.
\vspace{0.2cm}

\noindent
We secondly note that, by the Markov property for Poisson processes, the conditional distribution of 
the dust points on $(x, \infty ) $ given $ \mathcal{G}_t $ is simply a rate-1 Poisson process.  Therefore we can apply Lemma \ref{lem1} and use simple Markov properties of Poisson processes to conclude that, given $ \mathcal{G}_t $ and for all $N$ large enough, the conditional probability  of the intersection of
the following two events $E_1$ and $E_2$ is at least $\exp\left(-(1+\varepsilon/200)\log^2x/ (2\log 2) \right)$. We define the events as follows: 
on the interval $(x, \infty ) $ of halfline ${\it l}$,\\ 
%
$E_1:=\{$there are a single dust particle on $[5x/3, 2x]$ and a single dust particle on $[25x/9, 10x/3]$ and there is 
no other dust particles on $]x, \frac{10x}{3}]\}$, and \\
$E_2:=\{$for a single half line model with only one worker at $x$, given these two dust particles the environment on $[10x/3, \infty )$
and with a single dust particle added at the origin, this worker 
will make 
 $\log^3N$ jumps to the right, without returning to the origin$\}$.


\vspace{0.2cm}
\noindent Given this dust environment event and assuming that event $F_1$ does not occur, the conditional probability that, among the workers present (worker $w$ and all ``black'' workers), the next two moves are made by the worker $w$ is 
at least $(\log N + 1)^{-2}$.  So the probability that the two events above occur is at least 
$( \log N + 1)^{-2} \exp\left(-(1+ \varepsilon/200)\log^2x/2\log 2\right) $.
\vspace{0.2cm}

\noindent
Now, unless one of the following events occur: \\
\indent
a) $w$ takes time greater than $2\log^3N $ to make $\log^3N$ moves or\\
\indent
b) the number of moves by any worker in the next $2\log^3N$ time units exceeds $4\log^3N$ or\\
\indent
c) either $F_3$ or $F_4$ is violated,\\

\noindent
we have that $\rho_s$, the position of the closest dust particle to the origin does not change by more than $x/3 $ in the time it takes for $w$ to make $\log^3N$ jumps and so event $A$ occurs.
\end{proof}
\vspace{0.2cm}

\noindent
The above result will now  be used to ensure that for any $k <   \sqrt {2\log 2(1-\alpha )} $, there is an ``appropriate" chance that 
a worker skipping into a new halfline at distance $x$ greater than $e^{-k \sqrt{\log N}}$ will then always advance along it until time $\sigma_1 $.

\begin{lemma} \label{happytostay}
\noindent Fix a stopping time $T$ and worker $w$.  Let $\lambda \ = \ \lambda (T, w) $ be the first time after $T$ that $w$ makes $\log^3(N) $ consecutive advances without skipping.
There exists universal strictly positive $c_0$ so that
\begin{align*}
{\mathbf P}(w \ \mbox{skips in} \ (\lambda, \sigma_1) \backslash ( F_1 \cup F_2))\ \leq \ 
 e^{-c_{0} \log^2N}
\end{align*}
 (We interpret the event to be empty if $\sigma_1 \leq \ \lambda $).
\end{lemma}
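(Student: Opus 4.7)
My plan is to bound the probability that $w$ ever skips during $(\lambda,\sigma_1)$ by analysing the dust configuration ahead of $w$ on $l$ and summing skip-probabilities across $w$'s future clock rings. Working on the event $F_0^c\cap F_3^c\cap F_4^c$ in addition to the $F_1^c\cap F_2^c$ allowed by the statement (extra cost $o(1)$ by Propositions~\ref{prop1} and~\ref{prop2}, which is negligible against $e^{-c_0\log^2 N}$), I first locate $w$. Since by time $\lambda$ worker $w$ has completed $\log^3 N$ consecutive advances along $l$ starting from some position $\xi\in[0,1)$, the Poisson LLN (valid on $F_0^c$) for its $\mathrm{Exp}(1)$ gaps gives
\[
  y_0\;:=\;d_w(\lambda)\;\geq\;\tfrac{1}{2}\log^3 N.
\]
For $w$ to skip at a later clock ring $s\in(\lambda,\sigma_1)$ while still on $l$, the gap $\delta_s$ from $d_w(s)$ to the nearest remaining dust ahead on $l$ must exceed $d_w(s)+z_s\geq y_0$, where $z_s\geq 0$ is the distance from the origin to the nearest remaining dust on any halfline $\neq l$.

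\textbf{The tail estimate.} By the Markov description of the environment (the paragraph following Proposition~\ref{prop2}), conditional on $\mathcal G_s$ the dust on $l$ above the exploration level $I_l(s)$ is a fresh rate-$1$ Poisson process; hence $\delta_s$ is stochastically dominated by a sum of $m+1$ i.i.d.\ $\mathrm{Exp}(1)$ random variables, where $m$ counts the number of original dust particles removed between $d_w(s)$ and the next remaining particle. Under $F_1^c$, the preamble argument of Lemma~\ref{escape} caps the number of workers ever on $l$ before $\sigma_1$ by $O(\log N)$; combining this with the $\mathrm{Poisson}(1)$ clocks and the fact that each such worker entered $l$ at distance $<1$ (initial position or pre-$\sigma_1$ skip), I aim to show (this being the delicate step, see below) that $m$ exceeds $M:=\log^2 N$ in any gap $w$ would encounter only with probability at most $e^{-c\log^2 N}$, uniformly in $s$. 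Conditioned on $m\leq M$, the Chernoff bound on a sum of i.i.d.\ $\mathrm{Exp}(1)$ variables (taking $\theta=1/2$) yields
\[
  \mathbf P(\delta_s>y_0\mid m\leq M)\;\leq\;2^{M+1}\,e^{-y_0/2}\;\leq\;e^{-c_0\log^2 N}
\]
for a universal $c_0>0$ and all large $N$, since $y_0/2=\log^3 N/4$ dominates $(M+1)\log 2$. Summing over the subsequent clock rings of $w$ (with a geometric decay factor in $k$ coming from the fact that $w$'s position at its $k$-th non-skipping jump exceeds $y_0$ plus a sum of $k$ positive i.i.d.\ gaps) preserves the bound.

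\textbf{Main obstacle.} The most delicate ingredient is the uniform bound on $m$, which must hold throughout the potentially large random interval $(\lambda,\sigma_1)$. The argument has to combine (i) the $O(\log N)$ cap on workers on $l$ from $F_1^c$; (ii) the $\mathrm{Poisson}(1)$ clocks limiting the rate at which any single worker can create removals; and (iii) the structural fact that a worker reaching the region $\Omega(\log^3 N)$-away from the origin must first traverse that distance from an entry point in $[0,1)$ by successive advances, itself an event governed by Poisson concentration. Together these rule out a concentrated burst of $\geq\log^2 N$ removals in the relevant $l$-interval with the desired probability, closing the argument.
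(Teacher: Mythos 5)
Your overall strategy --- a per-ring tail bound on the forward gap $\delta_s$, summed over the clock rings of $w$ --- is genuinely different from the paper's, but it has a real gap at precisely the step you flag as delicate, and as calibrated that step is false. $F_1^c$ only caps the number of competing workers on $l$ by about $\log N$, and the run of consecutively removed particles immediately ahead of $w$ can be extended by one at each jump of this pool (workers leapfrogging to their nearest remaining particle), i.e.\ at combined rate of order $\log N$, while $w$ rings at rate $1$. Hence the probability that $m$ reaches your threshold $M=\log^2 N$ before $w$'s next ring is of order $\left(\frac{\log N}{\log N+1}\right)^{\log^2 N}\approx e^{-\log N}=N^{-1}$, not $e^{-c\log^2 N}$. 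Worse, since $\sigma_1=\infty$ with probability tending to one, your union is over an unbounded number of rings, so polynomially small per-ring bounds cannot be summed; and your geometric decay in $k$ presumes $d_w$ increases along advances, whereas an ``advance'' may be a backward same-line jump and $w$'s forward increments are not i.i.d.\ gaps once others remove particles ahead. To repair the scheme the threshold for $m$ must scale like $\log^3 N$ (comparable to $y_0$), and you then need the structural facts that the run ahead of $w$ grows only one particle at a time and resets whenever $w$ advances --- at which point you are effectively forced into counting, interval by interval, how many particles ahead of $w$ must be cleaned by others without $w$ cleaning any.

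That count is exactly the paper's proof, which is space-indexed rather than time-indexed and so avoids any union over rings: with $x_0=d_w(\lambda)\ \geq\ \log^3 N/8$ (obtained from $F_2^c$, where you use $F_0^c$), set $x_i=x_0(5/4)^i$, $J_i=(x_i,x_{i+1})$, and let $A_i$ be the event that $w$ cleans a point of $J_i$ before skipping. A skip from the region of $J_i$ forces all of the at least $x_0(5/4)^{i+1}/8$ particles of $J_{i+1}$ (a count guaranteed by $F_2^c$) to have been cleaned, each by one of at most $\log N$ workers, and each cleaning is performed by $w$ with conditional probability at least $1/\log N$; this yields ${\mathbf P}(A_{i+1}^c\mid A_1\cap\cdots\cap A_i\cap F_1^c\cap F_2^c)\leq (1-1/\log N)^{x_0(5/4)^{i+1}/8}$, and summing over $i$ gives $e^{-c_0\log^2 N}$ uniformly in time. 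A secondary flaw in your write-up: the claim that adding $F_0,F_3,F_4$ costs ``$o(1)$, negligible against $e^{-c_0\log^2 N}$'' is inverted --- $o(1)$ is enormous compared with the target; since the lemma's event excludes only $F_1\cup F_2$, any extra bad event must itself have probability $O(e^{-c\log^2 N})$. This happens to hold here (e.g.\ ${\mathbf P}(F_0)\leq Ne^{-c\log^2 N}$, and $F_3,F_4$ are stretched-exponentially small), but it must be verified quantitatively rather than asserted.
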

\noindent
{\it Remark:} Given the right-hand side upper bound, the lemma implies that the conclusion will hold for all workers, outside a set of small probability. 

\begin{proof}
We let $ {\it l} $ denote $ \ {\it l}_w(\lambda) $ and  $ x_0 $ denote $d_w(\lambda) $.
\vspace{0.2cm}

\noindent
Under $F_2^c$ the distance $x_0 $ must be at least $\log^2N + \frac{\log^3N- 2\log^2N}{4} \ \geq \ \frac{\log^3N}{8}$ for $N$ large.  
Note that  on $F_1^c$ from $\lambda $ up until time $\sigma _ 1 $ no further workers may enter ${\it l}$ and the number of other workers on the halfline is less than or equal to $\log N-1$.
\vspace{0.2cm}

\noindent
For $i \geq 1$, we define  $x_i \ = \ x_0 (\frac54 )^i $. We may assume, without loss of generality, that there is no dustpoints at any of these points.  We write $J_i$ for the interval
$(x_i, x_{i+1})$ and note that, on the complement of $F_2 $, the number of dustpoints in $J_i $ must exceed $x_0 (5/4)^i/8$, for all $i$.  We write $A_i $ for the event that $w$ cleans a point in $J_i $ before skipping and observe that, conditional upon event $A_i $, each dustparticle in $J_{i+1} $ has conditional probability (given $A_i$ and the cleaner of preceding particles) of being cleaned
greater than $\frac{1}{\log N }$.  Thus we have 
$$
P(A_{i+1}^c | A_1\cap A_2 \cdots \cap A_i \cap F_1^c \cap F_2^c) \ \leq \ \left(1 - \frac{1}{\log N} \right)^{x_0(5/4)^{i+1}/8}. 
$$
The result follows.
\end{proof}

\section{Results on Times}

In what follows, we say that, from time $t$ on, a worker $w$
{\it escapes}, or {\it escapes to infinity} (along a certain halfline $l$),
if the worker only advances along $l$ within time interval
$(t, \infty)$. Then the {\it escape time} is the time of the last skip of
the worker (or time 0 if there is no skip at all). 

\noindent
A major part of our work is understanding process $(  \rho_s : s \ \geq \ 0 )$.  We first give an easy upper bound.

\begin{lemma} \label{easy}
Fix $k \ \in (0, \infty ) $ and let $s_0 \ = \ N^{1-  \alpha } e^{- k \sqrt { \log(N)}}  $.  For any $ \delta > 0 $
$$
P( \rho_{s_0} \
 \geq \ e^{- (k - \delta)  \sqrt {\log(N)}}  \backslash F_4 ) \quad < \ e ^{-N^{(1- \alpha )/2 }},
$$
for $N$ large.
\end{lemma}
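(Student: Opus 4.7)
The plan is to compare a lower bound on the initial number of dust particles within distance $r := e^{-(k-\delta)\sqrt{\log N}}$ of the origin against an upper bound on the total number of worker jumps by time $s_0$. The key point is that the event $\{\rho_{s_0} \ge r\}$ forces \emph{every} initial dust particle in $[0,r]$ (across all halflines) to have been removed by time $s_0$, since dust particles never reappear. If one can show that, on $F_4^c$, the number of such initial particles exceeds the total number of removals performed by time $s_0$, then the desired conclusion follows. Quantitatively, the initial count is $\sim Nr = Ne^{-(k-\delta)\sqrt{\log N}}$, whereas the expected number of jumps is $\mu := N^\alpha s_0 = Ne^{-k\sqrt{\log N}}$, so the former exceeds the latter by a factor $\sim e^{\delta\sqrt{\log N}} \to \infty$, which is what makes the argument work.

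For the lower bound on the initial count, note that $Nr = N^{1-(k-\delta)/\sqrt{\log N}}$ lies in the range $(N^{1/2}, N/\log N)$ for $N$ large, since $(k-\delta)/\sqrt{\log N}\to 0$. Choosing $i$ slightly less than $Nr$ and applying Proposition \ref{prop2}, on $F_4^c$ one gets $t_i^{(N)} \le r$, so the number of initial dust particles within distance $r$ of the origin is at least $n := \lfloor Nr(1 - 2/\log N)\rfloor$. For the upper bound on removals, the superposition of the $N^\alpha$ independent rate-$1$ worker clocks is a rate-$N^\alpha$ Poisson process, and each ring accounts for at most one removed particle; hence the number of particles removed by time $s_0$ is dominated by a Poisson variable with mean $\mu$. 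The standard Chernoff estimate for Poisson tails gives, for $t := n/\mu \sim e^{\delta\sqrt{\log N}}$,
\begin{align*}
P(\mathrm{Poisson}(\mu)\ge n)\ \le\ \exp\!\bigl(-\mu(t\log t - t + 1)\bigr)\ \le\ \exp\!\Bigl(-\tfrac{\delta}{2}\sqrt{\log N}\cdot Ne^{-(k-\delta)\sqrt{\log N}}\Bigr)
\end{align*}
for $N$ large. Since $Ne^{-(k-\delta)\sqrt{\log N}}= N^{1-(k-\delta)/\sqrt{\log N}}$ eventually exceeds $N^{(1+\alpha)/2}$ (because $(k-\delta)/\sqrt{\log N}\to 0 < (1-\alpha)/2$), the exponent is far larger than $N^{(1-\alpha)/2}$.

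Combining both ingredients: on the intersection of $F_4^c$ with the event $\{\mathrm{Poisson}(\mu) < n\}$, at most $n-1$ particles have been cleaned by time $s_0$, yet at least $n$ initial particles lie in $[0,r]$, so at least one such particle survives and $\rho_{s_0} < r$. This yields $P(\{\rho_{s_0} \ge r\}\setminus F_4) \le e^{-N^{(1-\alpha)/2}}$ for $N$ large. There is no real obstacle here: the whole argument is essentially a two-scale counting estimate plus a Chernoff bound, and the only thing one needs to watch is the book-keeping with the sub-polynomial factor $e^{-k\sqrt{\log N}}$ versus powers of $N$, which is handled by the observation $(k-\delta)/\sqrt{\log N}\to 0$.
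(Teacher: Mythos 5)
Your proof is correct and takes essentially the same route as the paper's: both cap the total number of removals by time $s_0$ at roughly $2Ne^{-k\sqrt{\log N}}$ via a Chernoff bound on the workers' Poisson clocks (you use the superposed rate-$N^{\alpha}$ process, the paper a per-worker bound with a union over workers) and then use $F_4$ to translate between counts and positions of the rate-$N$ dust process — your ``more initial particles in $[0,r]$ than removals'' formulation is just the dual of the paper's ``$\rho_{s_0}$ is at most the position of the $J$-th smallest dust point''. The only cosmetic point is that your index-range check $Nr \in (N^{1/2}, N/\log N)$ needs $\delta < k$, which is harmless since the event in question shrinks as $\delta$ grows, so the general case follows by monotonicity.
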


\begin{proof}
Using the Chernoff bound, we may conclude that, by time  $s_0$, outside  probability
bounded above by 
$N^\alpha e^{-bs_0}$, each worker has made less 
than $2s_0$ jumps,
for some universal strictly positive $b$. 

\noindent
Thus the number of jumps by time $s_0$ made by $\rho _. $, the position of the closest dust particle to the origin, is bounded above by 
$ 2 \exp \left(-k\sqrt{\log N}\right)$, outside the small probability given above. 
We have $\rho_{s_0} \  $ is not bigger than the $2 \exp \left(-k\sqrt{\log N}\right) $'th smallest value of the rate-$N$ Poisson process of dustparticles.
 Thus, outside of event $F_4$, the desired conclusion holds.

\end{proof}
\noindent
This proof (and similar ``upper bound" results for $\rho_. $) simply rely on large deviations bounds for Poisson processes: large positive deviations of the worker jump process, lower large deviation bounds for the dustparticle Poisson process.
The ``lower bounds" for $\rho_. $) are not so immediate: indeed they are not true for $k <  c :=  \sqrt{2(1- \alpha)\log 2}$ as we will see in this section.
The basic problem is that while we must have a good number of worker jumps not all of them are skips (which necessarily result in a change in $\rho_.$).
\noindent
We now get some simple bounds on the jump rates for $\rho_. .$

\noindent
We will use the results in Section 8 for workers on a single halfline to analyze the evolution of our model.

\noindent
We introduce a comparison model.  Let $(X_t^x: t \geq 0 ) $ be the position at time $t$ of a worker $w$ on $[0 , \infty )$ starting at $x$, where dustparticles are placed at $0$ and on $(x, \infty ) $ according to a rate $1$ Poisson process.   The position $0$ is taken to be absorbing.  This process is useful to compare with our multi worker process on $\mathcal{X}$. Here is a simple observation that follows from natural monotonicity properties (and does not need a proof). 
\begin{prop} \label{coup}
Consider a worker $w$ who skips at time $t$ to halfline $l$ in $\mathcal{X}$ at distance $y$ less than $x$ from the origin.  At time $t$  we generate a process $(X_s^{3x}: s \geq 0 ) $ by using the Poisson process of jump times for $w$ (shifted by $t$) as the jump times for $X$ and the dust particle environment on $(3x, \infty )$ for $X$ to be the dustparticles environment on $(y,\infty ) $ (on $l$ shifted by $3x-y$).  Then with this coupling, if $ X^{3x} _s = 0 $ for $s \leq \rho_{2x} -t$, then $w$ has skipped by time $s${\bco .}

\end{prop}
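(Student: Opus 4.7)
The plan is to prove the contrapositive: assuming $w$ has not yet skipped at time $s$, with $s \leq \rho_{2x} - t$, we deduce that $X^{3x}_s \neq 0$. The driving inequality is $y + 2x < 3x$, reflecting that $w$'s cheapest skip option---to a dust particle on some halfline $l' \neq l$ at distance at most $2x$ from the origin---costs strictly less than $X^{3x}$'s backward option to the absorbing point $0$ at distance $3x + \delta_X$, whenever such a nearby particle exists. The condition $s \leq \rho_{2x} - t$ is used precisely to guarantee the existence of a dust particle at distance $\leq 2x$ from the origin at the relevant tick times.

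Let $P = \{p_1 < p_2 < \cdots\}$ denote the positions of the dust on $l \cap (y, \infty)$ at time $t$, measured in $l$-coordinates shifted by $-y$, so that $X^{3x}$ sees particles at $\{3x + p_k\}$. Suppose $X^{3x}$ returns to $0$ at tick $j$ of the common Poisson clock; the greedy rule for $X^{3x}$ forces $p_j > 3x + 2 p_{j-1}$ (with $p_0 := 0$). Assume for contradiction that $w$ has not skipped through tick $j$, so $w$ advances at each of the ticks $1, \ldots, j$. Let $0 = m_0 < m_1 < \cdots < m_j$ be the indices in $P$ of the particles visited by $w$; the strict inequality $m_i > i$ arises exactly when other workers have removed some $p_k$ with $k \leq m_i$ before $w$ could reach it.

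Set $j^* = \min\{i \leq j : m_i \geq j\}$, which exists since $m_j \geq j$. I claim that at the moment of tick $j^*$, the closest-to-origin dust lies on some halfline $l' \neq l$. Indeed, the closest dust on $l$ at that moment is $y + p_{m_{j^*}}$, and were this the minimizer of $\rho$, we would have $p_{m_{j^*}} \leq 2x - y < 3x < p_j$, forcing $m_{j^*} < j$ in contradiction with the definition of $j^*$. Hence there is a halfline $l' \neq l$ with dust within distance $2x$ of the origin, giving $z_{\min}(j^*) \leq 2x$, where $z_{\min}(j^*)$ denotes the minimum distance-to-origin of the closest dust on halflines $l' \neq l$ at the tick time. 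The greedy advance condition for $w$ at tick $j^*$ then reads
\begin{equation*}
p_{m_{j^*}} - p_{m_{j^*-1}} \;\leq\; y + p_{m_{j^*-1}} + z_{\min}(j^*) \;<\; 3x + p_{m_{j^*-1}},
\end{equation*}
using $y < x$ and $z_{\min}(j^*) \leq 2x$. Rearranging yields $p_{m_{j^*}} < 3x + 2 p_{m_{j^*-1}} \leq 3x + 2 p_{j-1}$, where the last inequality uses $m_{j^*-1} \leq j-1$ by the minimality of $j^*$. But $p_{m_{j^*}} \geq p_j > 3x + 2 p_{j-1}$, a contradiction.

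The main technical obstacle is the bookkeeping around other workers who may remove dust on $l \cap (y, \infty)$ before $w$ reaches it, so that the visited-index sequence $(m_i)$ overshoots $(i)$ and no naive position-lockstep with $X^{3x}$ is maintained. The resolution is to choose the first tick $j^*$ at which $(m_i)$ catches up to $j$ and to rule out the degenerate case where $\rho$ is attained on $l$ itself at that moment; the remaining comparison then collapses to the short chain of inequalities bootstrapped from $X^{3x}$'s reason for jumping backward at tick $j$. This monotonicity-under-removals is the \emph{natural monotonicity} referenced in the statement.
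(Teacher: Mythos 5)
Your argument is correct, and it is worth noting that the paper gives no proof at all here: Proposition \ref{coup} is introduced as ``a simple observation that follows from natural monotonicity properties (and does not need a proof)''. Your write-up therefore supplies the missing verification, and it correctly isolates the one point where the obvious coupling is \emph{not} lockstep: because other workers may remove particles on $l$ ahead of $w$, the index sequence $(m_i)$ of particles $w$ actually visits can run ahead of the sequence $(i)$ visited by $X^{3x}$, so $w$'s forward gaps are not comparable tick-by-tick to the gap $p_j - p_{j-1}$ that triggers $X$'s absorption. Your catch-up index $j^* = \min\{ i \le j : m_i \ge j\}$, together with ruling out that the global minimizer of distance-to-origin sits on $l$ at that tick (whence $z_{\min}(j^*) \le 2x$, which combined with $y<x$ gives the strict skip-threshold bound $y + z_{\min}(j^*) < 3x$), closes this cleanly; the resulting chain $p_{m_{j^*}} < 3x + 2p_{m_{j^*-1}} \le 3x + 2p_{j-1} < p_j \le p_{m_{j^*}}$ is a genuine contradiction. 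Two steps you use implicitly deserve a line each: (a) that at each tick the closest remaining dust on $l$ to the origin is exactly $w$'s advance target, i.e.\ no dust survives on $l$ behind $w$ --- true because $y$ was the globally nearest particle at the skip time $t$ (so $l$ carries no dust in $(0,y)$), greedy jumps never pass over surviving particles, and dust is never created; and (b) the reading of $\rho_{2x}$ in the statement as the exit time $\sup\{u : \rho_u \le 2x\}$, legitimate because dust is only removed, so $\rho_\cdot$ is nondecreasing and $\rho \le 2x$ indeed holds at every tick up to real time $t+s$. With those remarks added your proof is complete, and it shows the paper's ``does not need a proof'' is mildly optimistic: the desynchronization caused by the other workers genuinely requires the $j^*$ device (or an equivalent), not bare monotonicity.
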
 

\noindent
We now use Lemma \ref{lem1}  from the final Section to show the following

\begin{lemma} \label{half}
 Fix  $\delta > 0 $.  Assume that worker $w$ skips to a halfline $l$ at random time $t \ \leq \ \sigma_ {e^{- k \sqrt {\log(N)}}. }$ Let $A$ be the event that  $w$ advances to distance $1/100$ from $0$ before skipping. Then $P(A \backslash F_1) $ is bounded  above by
$
N^{- (k^2{\bco - }\delta)/(2\log(2))},
$
for all $N$ sufficiently large.
\end{lemma}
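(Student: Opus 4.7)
\bigskip

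\noindent \textbf{Proof proposal.}  The strategy is to reduce the multi-worker dynamics on $\mathcal{X}$ to the one-worker, one-halfline dynamics via the coupling of Proposition \ref{coup}, and then quote the escape bound of Lemma \ref{lem1}.  The numerology $N^{-k^2/(2\log 2)}$ comes out of the identity $\log^2(e^{-k\sqrt{\log N}}) = k^2 \log N$, which matches the single-halfline escape exponent $\log^2(x)/(2\log 2)$.

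\smallskip

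\noindent First I set $x = e^{-k\sqrt{\log N}}$.  Because $t \le \sigma_x$, the worker $w$ skips to some point $y$ on $l$ with $y < x$.  I apply Proposition \ref{coup} to introduce a one-worker process $(X^{3x}_s)_{s \ge 0}$ on $[0,\infty)$, starting at $3x$ with $0$ absorbing, whose Poisson jump times are those of $w$ (shifted by $t$) and whose dust environment on $(3x,\infty)$ is the image under the translation by $3x - y$ of the dust of $l$ on $(y,\infty)$.  By the proposition, whenever $X^{3x}$ is absorbed at $0$ by some time $s \le \rho_{2x} - t$, the worker $w$ must already have skipped.  Contrapositively, on the event $A$ and up to the time $\tau := (t_A - t) \wedge (\rho_{2x} - t)$ where $t_A$ is when $w$ first reaches $1/100$, the one-halfline process $X^{3x}$ has not been absorbed.

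\smallskip

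\noindent Next, on the event $A \cap \{t_A \le \rho_{2x}\}$, the identification of the dust environments forces $X^{3x}$ to reach distance at least $1/100$ (in fact $1/100 + 3x - y$) from the origin without being absorbed.  Applying Lemma \ref{lem1} (the upper-bound counterpart to the single-halfline estimate already exploited in Lemma \ref{escape}), the probability that a single worker on the halfline, starting at $3x$, can advance from $3x$ to distance $1/100$ before being pushed back to the origin is at most
\[
\exp\!\left(-\tfrac{1-\eta}{2\log 2}\,\log^2(3x)\right)
\]
for any prescribed $\eta>0$ and all $N$ large.  Since $\log(3x) = \log 3 - k\sqrt{\log N}$, we have $\log^2(3x) = k^2 \log N + O(\sqrt{\log N})$, which, after choosing $\eta$ small enough, yields the stated upper bound $N^{-(k^2 - \delta)/(2\log 2)}$.

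\smallskip

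\noindent It remains to rule out $A \cap \{t_A > \rho_{2x}\}$ (where the coupling window closes before $w$ reaches $1/100$) and to verify that the presence of other workers on $l$ does not disrupt the above.  On $F_1^c$, at most $\log N$ workers are present on $l$, and any additional worker that later enters $l$ only makes $w$ \emph{more} likely to skip (because its advances shrink $w$'s effective free range), so the bound survives a simple monotonicity argument.  The window-closure event requires $\rho$ to jump from below $x$ to above $2x$, and since $\rho$ increases only when the closest dust particle in the whole space is removed and the rate-$N$ Poisson density yields $\Theta(Nx)$ particles in $(0,2x)$, one obtains via standard Poisson large-deviation estimates that this happens with probability $O(N^{-c})$ for $c$ as large as desired, negligible compared to the main estimate.

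\smallskip

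\noindent The main obstacle is the careful bookkeeping for the coupling's time window and the interference by other workers: the factor $3x$ in Proposition \ref{coup} supplies just enough slack, and the $\log N$ bound on the number of interfering workers (from $F_1^c$) is absorbed into the $\delta$.  Once these are handled, the conclusion reduces cleanly to the one-dimensional estimate of Lemma \ref{lem1}.
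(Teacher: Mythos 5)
Your proposal is correct and essentially reproduces the paper's own proof: the paper likewise applies Proposition \ref{coup} with $x = e^{-k\sqrt{\log N}}$, bounds the main term by the probability that the coupled single-worker process $X^{3x}$ reaches $1/100$ (via the upper-bound mechanism of Lemma \ref{lem1}, with $\log^2(3x) = k^2\log N + O(\sqrt{\log N})$ absorbed into $\delta$), and disposes of the window-closure event by standard Poisson computations, comparing the window length to the time $Z$ it takes $w$ to make $\log N$ further jumps. The only point you leave implicit --- that on $A\setminus F_1$ the worker needs fewer than $\log N$ advances to reach $1/100$ (this control of the dust near the origin, rather than the bound on the number of interfering workers, is the real role of $F_1$ in this lemma), so that $t_A - t$ is of order $\log N$ and your window-closure estimate applies over the relevant time horizon --- is precisely what the paper encodes in its definition of $Z$.
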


\begin{proof}
We apply Proposition \ref{coup} with $x \ = \ e^{- k \sqrt {\log(N)}}$.  We immediately have that the probability that $w$ advances to $1/100 $  before skipping
and before $F_1 $ does not occur is less than
$$
P ( X^{3 e^{- k \sqrt {\log(N)}}} \mbox{ reaches } 1/100  ) \ + \
P( \sigma_ {2e^{- k \sqrt {\log(N)}}} -  \sigma_ {e^{- k \sqrt {\log(N)}}} > Z)
$$
where $Z$ is the time for $w $ to make $\log(N)$ additional  jumps after random time $t$.  
The first bound is less than $ N^{- (k^2{\bco - }\delta)/(2\log(2))} /2 $ for $N$ large by Lemma \ref{lem1} and the second is of smaller order as $N$ tends to infinity, 
by standard Poisson computations. 
\end{proof}
\vspace{0.3cm}

\noindent
As noted, the conditional rates of the Poisson processes are known.  Though $\rho_t$ is not adapted to our filtration, the times of jumps of this process are adapted: there is a jump (that is a change)
in $\rho_t $ if  at time $t$ a worker skips to another halfline.
The jump rate for our process at time 
$t$ given the filtration $ \{ \mathcal{G}_s\}_{s \geq 0}$,
$$
\lim_{h \rightarrow 0} P \left( \rho_{t+h} \ \ne \ \rho_t  | \mathcal{G}_t \right) /h, 
$$
remains constant over time intervals free of worker jumps and on such intervals is at least the probability that the next jump is a skip (summed over all $N^ \alpha $ workers).
We similarly define the skipping rate to be \\
$$ 
\lim_{h \rightarrow 0} P \left(\mbox{ a skip occurs in time interval } (t,t+h) | \mathcal{G}_t \right) /h .
$$
\\
\begin{lemma} \label{goodrate}
Let event $F_5$ be defined by 
$$
F_5:=\{ \exists s \leq \frac{N^{1- \alpha}}{e^{- (c + \frac{2\varepsilon}{3})\sqrt{\log N}}} \
\mbox{so that the total skipping rate}  \mbox{ is less than } 9N^  {\alpha }/10 
\}.
$$
Then 
${\mathbf P}(F_5)  \to  0 $ as $N$ tends to infinity,
\end{lemma}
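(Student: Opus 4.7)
The plan is to combine an upper bound on $\rho_s$ with a lower bound on each worker's individual conditional skip probability: when $\rho_s$ is very small and few workers have advanced far, every active worker contributes a term close to $1$ to the total skipping rate, and there are roughly $N^\alpha$ such workers. Write $T$ for the time bound appearing in the definition of $F_5$.

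I would first apply Lemma \ref{easy} with $k = c + 2\varepsilon/3$ and some auxiliary $\delta < \varepsilon/3$, obtaining that outside $F_4$ (and a Chernoff event for the clocks) $\rho_T \leq \exp(-(c+\varepsilon/3)\sqrt{\log N})$; since $\rho_\cdot$ is non-decreasing in time, this extends to $\sup_{s\le T}\rho_s = o(1)$. Using the conditional Poisson description of the environment beyond each $I_l(s)$ recalled at the start of Section 3, the probability that a worker $w$ with $l_w(s)\neq 0$ skips next, given $\mathcal{G}_s$, equals
\[
\mathbb{P}\bigl(X > Z_w(s) + 2 d_w(s) - I_{l_w(s)}(s) \mid \mathcal{G}_s\bigr),
\]
with $X\sim\operatorname{Exp}(1)$ and $Z_w(s) := \min_{l'\ne l_w(s)}(I_{l'}(s)+Y_{l'})$ where $Y_{l'}$ are i.i.d.\ $\operatorname{Exp}(1)$. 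Using $I_{l_w(s)}(s) \ge d_w(s)$ and a Poisson order-statistics estimate for $\mathbb{E}[e^{-Z_w(s)}\mid \mathcal{G}_s] = e^{-\rho_s}(1+O(\log N /N))$ (controlled uniformly in $w$ outside a small-probability event via Propositions \ref{prop1}--\ref{prop2}), each active worker's skip probability is at least $(1-o(1))\exp(-d_w(s)-\rho_s)$.

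The remaining work is to count how many workers are both active and have $d_w(s)$ small. A Chernoff bound on the $N^\alpha$ independent rate-$1$ clocks shows that at most $o(N^\alpha)$ workers are still at the origin once $s\ge \log^2 N$. Applying Lemma \ref{half} with $k = c+2\varepsilon/3$ bounds the per-skip probability of reaching distance $1/100$ without skipping by $N^{-(1-\alpha)-\Omega(\varepsilon)}$, while a further Chernoff bound gives at most $2N^\alpha T$ skips in $[0,T]$, so Markov's inequality makes the number of workers ever at distance $\ge 1/100$ by time $T$ also $o(N^\alpha)$. Thus at every $s \in [\log^2 N,T]$ there are at least $(1-o(1))N^\alpha$ workers with $d_w(s) < 1/100$, each contributing skip probability $\ge \exp(-1/100-o(1))(1-o(1)) \ge 0.99(1-o(1))$, so $R(s)\ge \tfrac{9}{10}N^\alpha$ for $N$ large. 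The initial window $s<\log^2 N$ is handled identically, since $\rho_s$ is only smaller there.

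The main obstacle is promoting these pointwise estimates to the uniform-in-$s$ statement required by $F_5$. Between at most $2N^\alpha T$ jump times $R(\cdot)$ is constant, $\rho_\cdot$ is piecewise constant and monotone, and the "first-reach-distance-$1/100$" event for any given worker occurs at most once; a union bound over these $O(N^\alpha T)$ discrete events together with the good events controlling $\rho_\cdot$ and the advanced-worker population transfers the pointwise control into the uniform one and yields $\mathbb{P}(F_5) \to 0$.
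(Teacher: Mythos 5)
Your proposal follows the same architecture as the paper's proof: control $\rho_s$ up to the time horizon via Lemma \ref{easy}, use Lemma \ref{half} (with a Chernoff count of the total number of skips and Markov's inequality) to show that only $o(N^\alpha)$ workers ever pass distance $1/100$, and then argue that each remaining worker, conditional on jumping, skips with probability close to one because its forward environment is an unrevealed rate-one Poisson process. Your explicit criterion ${\mathbf P}(X > Z_w(s) + 2d_w(s) - I_{l_w(s)}(s) \mid \mathcal{G}_s)$ with $I_{l_w(s)}(s)\ge d_w(s)$ is a correct and in fact cleaner rendering of the paper's ``skips unless there is a dust particle in $(x,x+2/100)$ or $\rho_t\ge 1/100$.'' One parameter slip: you invoke Lemma \ref{half} with $k=c+2\varepsilon/3$, but your own application of Lemma \ref{easy} only guarantees $\rho_s \le e^{-(c+\varepsilon/3)\sqrt{\log N}}$, so skips may land in the band between the scales $e^{-(c+2\varepsilon/3)\sqrt{\log N}}$ and $e^{-(c+\varepsilon/3)\sqrt{\log N}}$ where that invocation is not licensed; you should take $k=c+\varepsilon/3$ (as the paper does, with $c+\varepsilon/2$ against a $\rho$-bound at scale $e^{-(c+\varepsilon/2)\sqrt{\log N}}$). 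This is harmless, since $(c+\varepsilon/3)^2/(2\log 2) > 1-\alpha$ still yields the needed $N^{-(1-\alpha)-\Omega(\varepsilon)}$ per-skip bound.

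The genuine gap is your treatment of the initial window. You compute skip probabilities only for workers with $l_w(s)\ne 0$, and then dismiss $s<\log^2 N$ with ``$\rho_s$ is only smaller there.'' That reasoning does not address the actual deficit: at small times (at $s=0$, for all of them) almost all workers are still at the origin, and under the strict definition of a skip --- a jump from one halfline to another, while a jump from the origin is an \emph{advance} --- those workers contribute zero to the skipping rate, so the rate at $s=0$ is near zero regardless of how small $\rho_s$ is. The statement is only true (and is only used, in Corollary \ref{goodrate2} and Corollary \ref{llsigma}) if jumps from the origin are counted alongside skips: both remove the dust particle realizing $\rho_s$, which is exactly why the paper opens its proof by discarding jumps through the origin as automatic contributors and ``only need[s] to analyse workers on a halfline.'' Once you adopt that convention, an origin-bound worker contributes rate $1$ outright, your Chernoff bound on the clocks becomes unnecessary, and your count goes through uniformly on all of $[0,T]$; without it, the ``handled identically'' sentence cannot be repaired. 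Your final uniformity paragraph (rates constant between the $O(N^\alpha T)$ jump times, workers excluded permanently once past $1/100$) is fine and is implicit in the paper's argument.
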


\noindent
{\it Remark:} The $9 / 10 $ bound could be improved but is sufficient for our purposes.
\begin{proof}
As noted above, jumps of workers through the origin necessarily change $\rho_.$ so we only need to analyse workers on a halfline.
It is easy to see that if a worker (at $x$ on halfline $l$) is at distance less than $1/100 $ from the origin then, conditional upon jumping, they will skip unless either there is a dustparticle in $(x,x+ 2/100) $ on $l$ or $\rho _t  \ \geq \ 1/100 $.  The latter is contained in the event
\begin{align*}
\{ \rho_{  \frac{N^{1- \alpha}}{\exp ({(c + \frac{2\varepsilon}{3})\sqrt{\log N}})}} \ \geq \ 1/100 \} 
\end{align*}
which, by Lemma \ref{easy}, has probability tending to zero. So it remains to show that the number of workers that reach distance $1/100 $ from the origin before time  $ \frac{N^{1- \alpha}}{e^{{\bco - }(c + \frac{2\varepsilon}{3})\sqrt{\log N}}} $ is small compared to $N^ \alpha $.  Fix a worker $w$.  By the Markov property, each time they skip to a point on a halfline  at distance less than 
$e^{{\bco -}(c + \frac{\varepsilon}{2})\sqrt{\log N}}$ from the origin, their chance of advancing along the halfline to distance $1/100 $ before skipping is, by Lemma \ref{half}, less than $N^{-(c + \frac{\varepsilon}{3})^2/ 2\log(2)} < N^{-(1- \alpha + b \varepsilon)  }$ (for $ b \ = \ \frac{2}{3} \sqrt{ \frac{1- \alpha}{2 \log(2)}}$ and $N$ large).  Thus the expected number of workers to reach distance $1/100 $ from the origin by time $\frac{N^{1- \alpha}}{e^{{\bco -}(c + \frac{2\varepsilon}{3})\sqrt{\log N}}}$ is less than $N^{-\varepsilon b }$, and the result follows.
\end{proof}
\vspace{0.5cm}
\noindent 
This bound and basic Poisson process bounds immediately yield 
\begin{cor} \label{goodrate2}
Let $a\ge 0, R\ge 1$, $b=a+R$ and let $N(a,b)$ be the number of jumps of $\rho_s$ in time interval $[a,b]$.
There exists an event $B_N$ of probability tending to one as $N$ tends to infinity
so that, for every $1 \leq R \leq N^{1- \alpha} $, the 
 conditional probability ${\mathbf P}(C(R)|B^c_N)$ of the 
event
\begin{align*}
C(R)  :=  \quad \{  \exists a : \  \rho_b \leq \frac{1}{e^{{\bco -}(c + \frac{\varepsilon}{2})\sqrt{\log N}}} \ \mbox{and} \ 
\vert N([a,b]) -RN^ \alpha \vert > RN ^{\alpha} / 5\}
\end{align*}
is bounded
above by 
$
 N/R e ^{-kRN^\alpha },
$
for some universal $k > 0$.
\end{cor}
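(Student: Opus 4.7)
The plan is to combine Lemma \ref{goodrate} with a union bound and a Chernoff-type estimate for counting processes with adapted intensity. Throughout, I interpret the $\rho_b$ bound in $C(R)$ as $\rho_b \leq e^{-(c+\varepsilon/2)\sqrt{\log N}}$, i.e., in the same regime where Lemma \ref{goodrate} gives a good lower bound on the skipping rate.

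First, I would set $B_N = F_5^c \cap G_N$, where $G_N$ is the event that for every sub-interval $[a,b] \subset [0,T_0]$ with $b-a \geq 1$ and $T_0 := N^{1-\alpha}e^{(c+2\varepsilon/3)\sqrt{\log N}}$, the total number of worker jumps lies within $(1 \pm 1/20)N^\alpha(b-a)$. Because the superposition of the $N^\alpha$ independent rate-$1$ worker clocks is a rate-$N^\alpha$ Poisson process, standard Chernoff bounds applied on a suitable discretization (with interpolation) yield $\mathbf{P}(G_N^c) \to 0$, so $\mathbf{P}(B_N) \to 1$.

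Second, the crucial observation is that $\rho_s$ is monotone non-decreasing in $s$, since dust particles are only removed. Hence on the event $\{\rho_b \leq e^{-(c+\varepsilon/2)\sqrt{\log N}}\}$ the same upper bound holds for all $s \in [a,b]$, and in particular $b \leq T_0$. On $B_N$, Lemma \ref{goodrate} then guarantees that the skipping rate at every time $s \in [a,b]$ is at least $9N^\alpha/10$. Since every skip produces a jump of $\rho_\cdot$ and every jump of $\rho_\cdot$ is caused by a skip, $N([a,b])$ equals the number of skips in $[a,b]$.

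Third, fix $a$ and condition on $\mathcal{G}_a$. The upper tail is immediate: $N([a,b])$ is dominated by the total number of worker jumps in $[a,b]$, which on $G_N$ is at most $(1+1/20)RN^\alpha < 6RN^\alpha/5$. For the lower tail, I would write the skip-count as a counting process $S_s$ with $\mathcal{G}$-adapted compensator $\int_a^s \lambda_u\, du$ where $\lambda_u \geq 9N^\alpha/10$ on the relevant event, and apply the standard exponential martingale $\exp(-\theta S_s - \int_a^s(e^{-\theta}-1)\lambda_u\, du)$ evaluated at the bounded stopping time $b$ (after suitable localization on the event $\{\rho_b \leq e^{-(c+\varepsilon/2)\sqrt{\log N}}\} \cap B_N$). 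Optimizing in $\theta$ yields
$$
\mathbf{P}\bigl(N([a,b]) \leq 4RN^\alpha/5,\ B_N,\ \rho_b \leq e^{-(c+\varepsilon/2)\sqrt{\log N}}\bigr) \leq e^{-k_0 RN^\alpha}
$$
for an absolute constant $k_0 > 0$. Finally, union-bound over $a$ along a grid of spacing $R/100$ in $[0,T_0]$; this produces $O(T_0/R) = O(N/R)$ intervals (using $T_0 \leq N/100$ for $N$ large since $\alpha > 0$), and shifting $a$ by $R/100$ changes $N([a,b])$ by at most the count in two sub-intervals of length $R/100$, which on $G_N$ is $\leq (1+1/20)RN^\alpha/50$, absorbed comfortably in the $RN^\alpha/5$ margin.

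The main obstacle is the lower tail bound: because the skipping rate is an $\mathcal{G}_\cdot$-adapted random process rather than a deterministic constant, one cannot invoke off-the-shelf Poisson concentration. The resolution is to carry out the Chernoff estimate inside the counting-process/exponential-martingale framework, where the adapted lower bound on the intensity is exactly what is needed. Everything else is routine union-bounding and Poisson concentration.
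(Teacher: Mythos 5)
The paper gives no written proof of this corollary --- it is asserted to follow ``immediately'' from Lemma \ref{goodrate} and basic Poisson bounds --- and your route is exactly the intended one: take $B_N\supset F_5^c$ together with a clock-regularity event, kill the upper tail of $N([a,b])$ deterministically by the rate-$N^\alpha$ superposition of the workers' clocks, handle the lower tail by the exponential martingale for a counting process whose $\mathcal{G}_\cdot$-adapted intensity is at least $9N^\alpha/10$ on $F_5^c$ (this is indeed the right substitute for off-the-shelf Poisson concentration, since the skipping rate is random), and union-bound over a grid of $O(N/R)$ values of $a$, which reproduces the $N/R$ prefactor. Your normalizations of the statement's typos (reading the threshold as $\rho_b\le e^{-(c+\varepsilon/2)\sqrt{\log N}}$ and conditioning on $B_N$ rather than $B_N^c$) are also the correct ones.

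There is, however, one genuine gap: the step ``and in particular $b\le T_0$.'' Monotonicity of $\rho$ only gives $\rho_s\le e^{-(c+\varepsilon/2)\sqrt{\log N}}$ for all $s\le b$; it does not bound $b$, and nothing in $B_N=F_5^c\cap G_N$ does either. This matters twice: the intensity bound from $F_5^c$ is available only up to the time horizon of $F_5$, and your grid covers only $[0,T_0]$ while $C(R)$ quantifies over all $a\ge 0$. The point is not cosmetic: after the last skip $\rho_t$ is frozen at $\rho_\infty$ while $N([a,b])=0$, so whether a large $b$ with $\rho_b$ below the threshold can occur is exactly the question of whether $\rho_\infty$ exceeds $e^{-(c+\varepsilon/2)\sqrt{\log N}}$ --- true because $c+\varepsilon/2>c$ makes this scale subcritical, but it must be proved, not deduced from monotonicity. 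The fix is to enlarge $B_N$ by the event $\{\rho_{t_*}>e^{-(c+\varepsilon/2)\sqrt{\log N}}\}$ for $t_*=3N^{1-\alpha}e^{-(c+\varepsilon/2)\sqrt{\log N}}\le T_0$ (with the horizon of $F_5$ read, as literally written, as $N^{1-\alpha}e^{(c+2\varepsilon/3)\sqrt{\log N}}$): each skip removes the currently globally closest dust particle, whp at most $2Ne^{-(c+\varepsilon/2)\sqrt{\log N}}$ initial particles lie within the threshold distance, and your own martingale lower bound on the skip count at the fixed time $t_*$ shows that on $F_5^c$ the skips by time $t_*$ whp exceed this number, forcing $\rho_{t_*}$ past the threshold; then monotonicity does confine $b$ to $[0,t_*]\subset[0,T_0]$ and your argument closes. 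Two lesser inaccuracies: your claim that every jump of $\rho$ is caused by a skip is false (the closest particle can be removed by a worker advancing toward the origin on its own halfline), but harmless as used, since the upper tail needs only $N([a,b])\le{}$(total worker jumps) and the lower tail only $N([a,b])\ge{}$(number of skips); and $G_N$ as you define it controls only subintervals of length $\ge 1$, so for small $R$ the grid-shift error involves intervals of length $R/100<1$ --- strengthen $G_N$ to cover these (routine Chernoff plus union bound).
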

\noindent
Another corollary is
\begin{cor} \label{llsigma}
For any $ \varepsilon > 0$, 
$$
{\mathbf P} \left(\sigma_{e^{-(c+\varepsilon )  \sqrt{\log N}}} \ < \  2N^{1- \alpha}e^{-(c+\varepsilon )  \sqrt{\log N}}
\right) \to 1, 
$$ 
as $N$ tends to infinity. 
\end{cor}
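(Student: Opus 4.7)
Set $x_N := e^{-(c+\varepsilon)\sqrt{\log N}}$ and $T_N := 2N^{1-\alpha}x_N$, so the claim reads $\mathbf{P}(\sigma_{x_N} \leq T_N) \to 1$. My plan is to argue by contradiction: assuming $\sigma_{x_N}>T_N$, I will produce incompatible upper and lower bounds on the total number $S_{T_N}$ of skips performed by all workers during $[0,T_N]$.

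\textbf{Upper bound.} By the very definition of $\sigma_{x_N}$, every skip occurring strictly before $\sigma_{x_N}$ lands on a dust particle at distance strictly less than $x_N$ from the origin, and the skip removes that particle. Since no dust is ever created and each initial particle can be removed at most once, on $\{\sigma_{x_N}>T_N\}$ we have $S_{T_N} \leq M_N$, where $M_N$ denotes the initial number of dust particles within distance $x_N$ of the origin. By superposition, $M_N$ is Poisson with mean $Nx_N$, and since $\log(Nx_N) = \log N - (c+\varepsilon)\sqrt{\log N} \to \infty$, a routine Chernoff estimate gives $\mathbf{P}(M_N > (11/10)Nx_N) \to 0$.

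\textbf{Lower bound.} A direct computation yields
\[
\frac{T_N}{N^{1-\alpha}\exp\!\big((c+\tfrac{2\varepsilon}{3})\sqrt{\log N}\big)} \;=\; 2\exp\!\big(-(2c+\tfrac{5\varepsilon}{3})\sqrt{\log N}\big) \longrightarrow 0,
\]
so $T_N$ lies safely inside the range where Lemma \ref{goodrate} applies. Outside the vanishing-probability event $F_5$, the total skipping rate is at least $9N^\alpha/10$ throughout $[0,T_N]$, hence the $(\mathcal{G}_t)$-compensator $\Lambda_t$ of the skip-counting process $S_t$ satisfies $\Lambda_{T_N} \geq (9/10)N^\alpha T_N = (9/5)Nx_N$ on $F_5^c$. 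Invoking the time-change representation (there is a rate-$1$ Poisson process $N^*$ with $S_t = N^*(\Lambda_t)$), we obtain $S_{T_N} \geq N^*((9/5)Nx_N)$ on $F_5^c$; since $Nx_N \to \infty$, a further Chernoff bound gives $\mathbf{P}(S_{T_N} < (3/2)Nx_N) \to 0$.

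\textbf{Conclusion.} On the intersection of the three high-probability events $F_5^c$, $\{M_N \leq (11/10)Nx_N\}$ and $\{S_{T_N} \geq (3/2)Nx_N\}$, the assumption $\sigma_{x_N}>T_N$ would force $(3/2)Nx_N \leq S_{T_N} \leq M_N \leq (11/10)Nx_N$, which is impossible for $N$ large. Hence $\mathbf{P}(\sigma_{x_N}\leq T_N) \to 1$, proving the corollary. The main technical delicacy is the lower-bound step: because the skipping intensity is a stochastic process adapted to a non-trivial filtration rather than a deterministic rate, one cannot compare $S_{T_N}$ directly with a Poisson variable and must instead pass through the martingale/compensator formalism (or equivalently through the time-change theorem) to convert the deterministic lower bound on $\Lambda_{T_N}$ into a probabilistic lower bound on $S_{T_N}$.
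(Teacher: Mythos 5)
Your proof is correct and follows essentially the same route as the paper: the paper also bounds the complement by the union of the skipping-rate event $F_5$ (Lemma \ref{goodrate}), an event $F_6$ that there are too many dust points within distance $e^{-(c+\varepsilon)\sqrt{\log N}}$ of the origin, and an event $F_7$ that too few skips occur by time $2N^{1-\alpha}e^{-(c+\varepsilon)\sqrt{\log N}}$, and then uses the same counting contradiction, since every skip before $\sigma_{x_N}$ removes a distinct particle within distance $x_N$ of the origin. Your write-up merely makes explicit two points the paper leaves to ``basic Poisson bounds'': the compensator/time-change argument converting the intensity lower bound into a lower bound on the skip count, and a clean separation of constants ($11/10$ versus $3/2$) in the contradiction.
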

\begin{proof}
We simply
note that the  complement to the event $\sigma_{e^{-(c+\varepsilon )  \sqrt{\log N}}} \ < \  2N^{1- \alpha}e^{-(c+\varepsilon)}$ 
 is a subset of a union of three events, 
$F_5$, $F_6$ and $F_7$, where  $F_5$ was defined above, 
\begin{align*}
F_6 := \{ & \mbox{the number of all dust points within distance} \  e^{-(c+\varepsilon )  \sqrt{\log N}}  \\
& \mbox{ from the origin is greater than} \    
 3Ne^{-(c+\varepsilon )  \sqrt{\log N}} / 2 \}
\end{align*} 
and 
\begin{align*}
 F_7 := \{ & \mbox{the number of skips by time  } \\
& 
2Ne^{-(c+\varepsilon )  \sqrt{ \log N}}   \mbox{ is less than}  \ 3Ne^{-(c-\varepsilon )  \sqrt{\log N}} / 2\},
\end{align*}
and the probability of each of them tends to $0$ when $N$ grows,  that of $F_5 $ is simply Lemma \ref{goodrate}, $F_6$ involves simple Poisson process bounds and that for$F_7$
is a consequence of lemma \ref{goodrate}.
\end{proof}

\noindent
Our objective in the remainder of this section is to show that, in a crude sense, the dominant part of the workers escapes occur ``around" time $N^{1- \alpha} \exp (-c \sqrt{\log N })$ and that, for any $\varepsilon \in (0,c),$ we get 
$\sigma_{\exp (-(c-\varepsilon )  \sqrt{ \log N })} = \infty $ with probability tending to one, as 
$N \rightarrow \infty$.
\vspace{0.2cm}

\noindent
We now address upper bounds on times beyond which all workers do not return to the origin (i.e. only advance along halflines).\vspace{0.2cm}

\begin{thm} \label{propsigma}
\noindent For the worker/dust process on $N$ halflines, for each $ \varepsilon \ > \ 0 $, 
$$
 P ( \sigma_ {e^{-(c- \varepsilon) \sqrt N }} \ < \ \infty ) \ \rightarrow \ 0
$$
as $N$ tends to infinity, where as before $ c =  \sqrt{2(1- \alpha)\log 2} $.

\end{thm}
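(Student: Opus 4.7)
The plan is to prove that $\sigma_y = \infty$ with probability tending to one, where $y := e^{-(c-\varepsilon)\sqrt{\log N}}$, by combining two high-probability events. Set $T^* := N^{1-\alpha}e^{-(c-\varepsilon/2)\sqrt{\log N}}$ and consider the subinterval $I := [T_a,T^*]$ with $T_a := N^{1-\alpha}e^{-(c-\varepsilon/3)\sqrt{\log N}}$.

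\emph{First event (no large skip on $[0,T^*]$).} I would run a counting argument for \emph{close} halflines, those whose smallest remaining dust particle lies below $y$. By the superposition theorem and Poisson concentration, the initial number of close halflines is $(1+o(1))Ne^{-(c-\varepsilon)\sqrt{\log N}}$, while each worker jump removes at most one close particle, and the total number of worker jumps in $[0,T^*]$ is sharply concentrated around $N^\alpha T^* = Ne^{-(c-\varepsilon/2)\sqrt{\log N}}$, smaller by a factor $e^{(\varepsilon/2)\sqrt{\log N}}$. Hence at least $Ny/2$ close halflines remain throughout $[0,T^*]$ with high probability, and for any worker on halfline $l$ and $t \le T^*$ the distance $\rho^{-l}_t$ to the closest remaining particle off $l$ satisfies $\rho^{-l}_t < y$. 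This prevents any skip to distance $\ge y$ on $[0,T^*]$.

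\emph{Second event (every worker escapes by $T^*$).} On an event of probability tending to one, each of the $N^\alpha$ workers produces $\log^3 N$ consecutive advances at some time in $I$. Lemma \ref{easy} furnishes an upper bound on $\rho_t$, and a matching lower bound $\rho_t \gtrsim tN^{\alpha-1}$ follows from the fact that at most $2N^\alpha t$ particles have been cleaned by time $t$ together with the order statistics in Proposition \ref{prop2}; together these confine $\rho_t$, and therefore any skip distance in $I$, to a window on which Lemma \ref{escape} applies with parameter $k = c-\varepsilon/3$ and intrinsic parameter of order $\varepsilon/6$. This gives a per-skip escape probability at least $p := N^{-(1-\alpha)+\eta}/(\log N)^2$ for some $\eta = \eta(\varepsilon) > 0$. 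The per-worker skip rate in $I$ is bounded below by a positive constant via Lemma \ref{goodrate}, so the expected number of escape triggers per worker is at least $p|I| \ge N^\eta e^{-(c-\varepsilon/2)\sqrt{\log N}}(\log N)^{-2} \to \infty$, and a Markov-property iteration together with a union bound over the $N^\alpha$ workers closes the step.

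\emph{Assembly.} On the intersection of the two events above with the event from Lemma \ref{happytostay} (no worker skips in $(\lambda_w,\sigma_1)$ for any $w$), I claim $\sigma_y = \infty$. If $\sigma_y \le T^*$, the first event is violated. If $\sigma_y > T^*$, then $\sigma_1 \ge \sigma_y > T^*$ since $\sigma_\cdot$ is nondecreasing and $y<1$, while by the second event every $\lambda_w \le T^* < \sigma_1$; hence the worker executing the first skip to distance $\ge y$ skips inside $(\lambda_w,\sigma_1)$, contradicting Lemma \ref{happytostay}. The main obstacle will be the second step: the two-sided control on $\rho_t$ in $I$ must be tight enough to place the skip targets in the hypothesis range of Lemma \ref{escape}, and the resulting exponent $\eta$ must be strictly positive so that $p|I|$ dominates $\alpha\log N$ in the union bound over workers.
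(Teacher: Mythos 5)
There is a genuine gap, and it sits exactly where you flagged your ``main obstacle'': the lower bound on $\rho_t$. Your claim that $\rho_t \gtrsim tN^{\alpha-1}$ ``follows from the fact that at most $2N^\alpha t$ particles have been cleaned by time $t$ together with the order statistics in Proposition \ref{prop2}'' has the inequality backwards: an upper bound on the number of cleaned particles gives an \emph{upper} bound on $\rho_t$ (this is precisely Lemma \ref{easy}), since the fewer particles removed, the closer the nearest survivor can be. A lower bound on $\rho_t$ would require that \emph{all} particles within distance $tN^{\alpha-1}$ of the origin have been removed by time $t$, and this fails once workers begin to escape — which, on your interval $I=[T_a,T^*]$ with $T_a = N^{1-\alpha}e^{-(c-\varepsilon/3)\sqrt{\log N}}$ (a time \emph{beyond} the critical scale $N^{1-\alpha}e^{-c\sqrt{\log N}}$), is exactly the regime in question. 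The paper warns about this explicitly after Lemma \ref{easy}: lower bounds for $\rho_\cdot$ ``are not true for $k < c$.'' For the same reason your appeal to Lemma \ref{goodrate} for a per-worker skip rate on $I$ is doubly unsupported: that lemma bounds the \emph{total} skipping rate, and only up to time $N^{1-\alpha}e^{-(c+2\varepsilon/3)\sqrt{\log N}}$, which lies strictly before $I$; on $I$ an escaped worker skips at rate zero. Without the two-sided pin on skip-target distances, Lemma \ref{escape} cannot be invoked at the skips in $I$, and your second event — hence the whole assembly — does not close. (Your first event, the counting argument showing no skip reaches distance $y$ by $T^*$, is fine and parallels the paper's use of Proposition \ref{bandersnatch}.)

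The paper's proof circumvents the $\rho$ lower-bound problem with a level-anchored rather than time-anchored argument, and this is the idea your proposal is missing. It sets $T_1 = \sigma_{\exp(-(c-\varepsilon/8)\sqrt{\log N})}$ and $T_2 = \sigma_{\exp(-(c-\varepsilon/4)\sqrt{\log N})}$ and works on the event $\{T_1<\infty\}$ (if $T_1=\infty$ there is nothing to prove, since $T_2\ge T_1$). The point is that at the moment of a skip past level $\ell_1 = e^{-(c-\varepsilon/8)\sqrt{\log N}}$, \emph{every} halfline's closest dust particle is at distance at least $\ell_1$ from the origin, and since dust is only ever removed this lower bound persists for all $t \ge T_1$ — the needed lower pin on skip targets becomes definitional, no quantitative estimate on $\rho_t$ as a function of $t$ required. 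All pre-$T_2$ skip targets then lie in $[\ell_1, e^{-(c-\varepsilon/4)\sqrt{\log N}}]$, the hypothesis window of Lemma \ref{escape}. The proof then shows $T_2 - T_1 \ge W := N^{1-\alpha}e^{-(c-\varepsilon/4)\sqrt{\log N}}/3$ via Proposition \ref{bandersnatch} (the many fresh lines in the inter-level annulus must be cleared at rate at most $N^\alpha$), that within time $W/6$ after $T_1$ every still-skipping worker accrues enough skip attempts — using the correct replacement for your rate argument, namely that a worker who has not yet made $\log^3 N$ consecutive advances must skip at least once per $\log^3 N$ jumps — so that by Lemma \ref{escape} each worker reaches its time $\lambda(T_1)$ before $T_1 + W/6$, and finally that Lemma \ref{happytostay} forbids any skip in $(\lambda(T_1),\sigma_1)$. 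A skip at $T_2 \ge T_1 + W$ would then be a skip in the forbidden zone, forcing $T_2 = \infty$. If you want to repair your proposal, replacing your time-window $I$ and the false $\rho_t \gtrsim tN^{\alpha-1}$ bound by this stopping-time anchoring is the essential correction.
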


\noindent
{\it Remark: } This implies that the first statement of Theorem \ref{th1} holds.  In particular $P(\sigma_1 < \infty ) $ tends to zero as $N$ tends to infinity.
Thus all the preceding statements involving limiting probability of behaviour before time $\sigma_1 $ become strengthened to results over all time.
\vspace{0.2cm}

\begin{proof}
We fix $\varepsilon > 0$ small compared to $c$
and define two stopping times for our process:
$$ 
T_{1} = \sigma_{ \exp (- (c - \frac{\varepsilon}{8}) \sqrt{\log N}) } \ 
\mbox{and}
\
T_{2} = \sigma_{ \exp (- (c - \frac{\varepsilon}{4}) \sqrt{\log N})}. \ 
$$
We wish to show that $T_2 $ equals infinity with probability tending to one.  
In fact both stopping times can well be infinity and in fact our proof (and the arbitrariness of $\varepsilon $) will show that with probability tending to one as $N$ tends to infinity, this is so.
 Our first point is that on  $T_1 < \infty , \ T_2 - T_1 \geq   N^{1 - \alpha } e^{- (c - \frac{\varepsilon}{4}) \sqrt{\log N}} / 3$ 
outside an event of probability tending to zero as $N$ tends to infinity.  This follows from Proposition \ref{bandersnatch}  (applied with $k_2 = \ (c - \frac{\varepsilon }{4}) $ and $k_1 =\ (c - \frac{\varepsilon}{8}) $) and the fact that the corresponding dust particles are removed at rate bounded by $N^{\alpha } $. 
Let $A_1 $ be the (bad) event that this lower bound does not hold.
\vspace{0.2cm}

\noindent
 Again, on the event $\{T_1$ is finite$\}$, we have 
by elementary large deviations bounds on rate-1 Poisson processes that (outside an event of probability tending to zero as $N$ tends to infinity) every worker will make 
at least $    N^{1 - \alpha } e^{- (c - \frac{\varepsilon}{4}) \sqrt{\log N}} / 8 $ jumps in time interval $(T_1, T_1 + N^{1 - \alpha } e^{- (c - \frac{\varepsilon}{4}) \sqrt{\log N}} / 6)$.  The event that these laws of large numbers are not respected will be denoted by $A_2$.
\vspace{0.2cm}

\noindent
A third bad event, $A_3$, is that for at least one of the $N^ \alpha $ workers, say worker $w$, skips in time interval $(\lambda(T_1), \sigma_1) $ (where the stopping time $\lambda(T_1)$ is understood to be specific to worker $w$).  By Lemma \ref{happytostay} and the following Remark, this has probability tending to zero as $N$ becomes large.
\vspace{0.2cm}

\noindent
Finally let $A_4 $ be the event that, for some worker $w$, $\lambda^\prime (T_1) $  is more than $T_1 + N^{1 - \alpha } e^{- (c - \frac{\varepsilon}{4}) \sqrt{\log N}} / 6$
where $\lambda^\prime (T_1) $ is the time of the last skip preceding  $\lambda (T_1) $ as defined in Lemma \ref{happytostay}.  If $A_2 $ does not occur this would imply that the worker made
$$
N^{1 - \alpha } e^{- (c - \frac{\varepsilon}{4}) \sqrt{\log N}} / (8\log^3N) 
$$
skips before time  $T_1 + N^{1 - \alpha } e^{- (c - \frac{\varepsilon}{4}) \sqrt{\log N}} / 6$ without one of the skip times being  $\lambda^\prime (T_1)$.
By Lemma \ref{escape} and the Markov property, $P(A_4)$ tends to zero as $N$ tends to infinity.
\vspace{0.2cm}

\noindent
The result now follows by noting that on the intersection of the complements of the $A_i$ we have, by definition of $A_3$, 
that every worker does not skip on interval $(\lambda(T_1), \sigma_1) $
and hence not on $(\lambda^ \prime(T_1), \sigma_1) $.  Furthermore by the definition of $A_4$ and $A_2$ we also have that no worker will skip on interval $(T_1 + N^{1 - \alpha } e^{- (c - \frac{\varepsilon}{4}) \sqrt{\log N}} / 6, \sigma _1).$ But this and $A_1$ imply together that $T_2 $ must be infinite.
\end{proof}
\vspace{0.2cm}

\noindent We now wish to relate this to actual times.  We note that, by Lemma \ref{escape}, for $\varepsilon > 0 $ and $N$ large,  after time
$\sigma_{\exp (-(c+ \varepsilon) \sqrt{ \log  N})}$
 every time a worker changes halfline (and thus moves to a closest dustpoint ) it has a chance $ \frac{1}{N^{(1 - \alpha +3d\varepsilon /2 )}} $ of 
 coming to a halfline  which it will  not leave before time $\sigma_1$.
\vspace{0.2cm}

\noindent The chance that a worker can make $N^{1 - \alpha + 3d\varepsilon }$ visits without the above event occuring 
does not exceed $e^{- N^{\varepsilon }}$. So with probability tending to zero as $N$ becomes large none of the $N^{\alpha}$ workers satisfies this.
\vspace{0.2cm}

\noindent Let $s_w$ be the number of skips of worker $w$ by time $\sigma_1$.
Let
event $F_8^c$ be that, for each worker
and  for each of his first $\min (s_w,N^{1+ 2d\varepsilon }) $ skips to another halfline, all the numbers of advances between consecutive 
skips do not exceed $\log^3 N $.
\vspace{0.2cm}

\noindent Given Lemma \ref{escape}, we can say that ${\mathbf P}(F_8)$ tends to zero as $N $ becomes large. But unless event $F_1$ occurs, each jump to a new halfline after time
$\sigma_{\exp (-(c+ \varepsilon) \sqrt{\log N})}$ gives a chance at least 
\begin{align*}
\frac{K}{N^{1- \alpha +2 \varepsilon / d}\log^2N}
\end{align*}
of escape, for universal $K$.  
So the probability that there exists a worker $w$ which has not escaped by time 
$N^{1- \alpha +3 \varepsilon / d} \log^5 N$ is bounded by 
$$
{\mathbf P}(F_1) + {\mathbf P}(F_8) + \left(1- \frac{K}{N^{1- \alpha +2 \varepsilon / d}\log^2N}\right) ^{N^{(1- \alpha 
+3 \varepsilon / d)} \log^2 N},
$$
for some universal $K$. This implies statement \eqref{eq:1111} of Theorem \ref{th1}, since all three probabilities above tend to $0$ when $N$ grows. 

\section {Proof of existence of ``double" halflines}

In this section we prove Theorem \ref{th1} for the case of $ \alpha > 2/3$.   That is with probability tending to one as $N $ tends to infinity, there will be a halfline on which a pair of workers 
will eventually travel.    We fix $ \varepsilon < ( \alpha - 2/3) / 100 $.
\vspace{0.2cm}

\noindent
We note that, by Corollary \ref{llsigma}, for $ \varepsilon > 0 $ fixed with probability tending to one (as $N$ tends to infinity), $\sigma_{\exp(-(c +  \varepsilon/2)\sqrt{\log N})} < \infty $. 
Furthermore by Proposition \ref{bandersnatch}, with probability close to one, we have at least $N\exp(-(c +  \varepsilon/2)\sqrt{\log N})/2$ halflines having the property that their first dustparticle is (at time $0$) at a distance from the origin in the interval
$$
\left( \exp (-(c +  \varepsilon)\sqrt{\log N}), \exp (-(c +  \varepsilon/2)\sqrt{\log N}) \right).
$$
We denote by $V$ the set of such halflines and by $V(t)$ the subset of  halflines in $V$ which have not been visited by any worker by time $t$.
\vspace{0.2cm}

\noindent
So we may conclude that  if we define the stopping times $s_i $ recursively by $s_0 \ = \ \sigma_{\exp (-(c +  \varepsilon)\sqrt{\log N})} $ and
$s_{i+1} \ = \ \inf \{ t > s_i: \mbox{ a worker skips to a halfline in } V(s_i)\}$,
then, with probability tending to one as $N $ tends to infinity,
\begin{align*}
s_{N\exp (-(c +  \varepsilon/2)\sqrt{\log N})/2 } < \sigma_{\exp (-(c +  \varepsilon/2)\sqrt{\log N})}.
\end{align*} 
  We denote by ${\it l}_i $ and $x_i$ the halfline and dustparticle position associated with $s_i$.  
\vspace{0.2cm}

\noindent
The advantage of considering just the stopping times $s_i $ is that at time $s_i $ there will be a unique worker on ${\it l}_i $ and we will have, conditional upon $\mathcal{G}_{s_i}
$, a rate-1 Poisson dust environment on $(x_i, \infty )$.  
\vspace{0.2cm}

\noindent
We say that stopping time $s_i $ is {\it bold} if \\
(I) the position $x_i$ is within the distance 
\begin{align*}
[ e^{-(c + {\varepsilon})\sqrt{\log N}}, e^{-(c + \frac{\varepsilon}{2})\sqrt{\log N}} ]
\end{align*} 
from the origin;\\
(II) the second dustparticle $x_i(2)$ on $l_i$ is within distance $N^{- ( 1 - \alpha +\varepsilon) } $ of $x_i$;\\
(III) the third dustparticle $x_i(3)$ on $l_i$ is of distance at least $\frac{5x_i}{4}$ from the origin;\\
(IV) the environment on the line $l_i$ is ``$2\varepsilon $ good''.  That is: if we consider an auxiliary model of a single halfline $l_i$ with
dust particles located at $x_i$,$x_i(2)$ and $x_i(3)$, plus the given dustparticle environment within $(x_i(3),\infty)$ and an extra dust particle at the origin, and if we assume that there are only two workers, who are initially located  at $x_i$ and $x_i(2)$, then both workers have  chance at least $\frac{1}{N^{\varepsilon/2}}$ of escaping the origin. 
\vspace{0.2cm}

\noindent
It is clear from Lemma \ref{lem3} and basic properties of the Poisson process that the probability that $s_i$ is bold is at least $\frac{K}{N^{1- \alpha + \varepsilon}} \times \frac{1}{N^{2(1- \alpha + \varepsilon )}},$
for universal $K$.  
\vspace{0.2cm}

\noindent
We say that $s_i $ is a {\it success} if it is bold and, in addition, \\
(i) a second worker arrives at the second nearest dust point within time $N^{- \varepsilon / 2} $ and the first worker at $x_i$ does not move within this time interval and \\
(ii) the first and second workers do not pass the origin again, having made $\log N  $ jumps in next 
$N^\varepsilon /2$ units of time.\\

\noindent
As a useful comparison process for jump time $s_i $ if (i) above occurs at time $\tau_i $, then $Y( l_i, \tau_i ) $ will denote the $2$ worker process on $l_i $ 
with initial worker positions being those on $l_i $ at time $\tau_i $  with the given dustparticle environment on $l_i $ augmented by a dustparticle at the origin
and with the two jump Poisson processes given by the two workers on ${\it l}_i $ in the larger model.

\noindent
We see easily that the probability that $s_i $ is a success is greater than or equal to 
$\frac{c}{N^{3(1- \alpha + \varepsilon )}},$
for universal $c$.  Let $A_i $ be the event that $s_i $ is a success.  
The $A_i$ are (slightly) dependent, so we need to attend to some technical issues to complete the proof.
\vspace{0.2cm}

\noindent
Let us define event $F_{10} $ to be that for some Poisson jump time $t$ in interval $(\sigma_{e^{(c + {\varepsilon})\sqrt{\log N}}}, \sigma_{e^{(c + {\varepsilon} / 2  )\sqrt{\log N}}}) $ we have that there are less than  $\frac{N^{\alpha - \varepsilon/2}}{2} $ skips by workers in time interval $(t,t+ N^{- \varepsilon /2 } )$ or there are for the same interval more than $2N^{ \alpha - \varepsilon/2} $ worker jumps or event $F_1 $ occurs.  
We have by Corollary \ref{llsigma} and Lemma \ref{goodrate} and basic Poisson bounds that $P(F_{10}) $ tends to zero as $N$ tends to infinity.  
\vspace{0.2cm}

\noindent
It has to be noted, for $N$ large, that if $F_{10} $ does not occur then for each $s_i $ provided the associated worker does not move in the succeeding time interval of length $N^{\varepsilon / 2}$,
then condition (i) of success will automatically be satisfied and that (provided $F_2$ does not occur) then the required movement of the two workers on $l$ implies that no further workers arrive on ${\it l}$.  Thus if we consider $\tau_i $ to be the arrival of the second worker on ${\it l}$, then $s_i $ is a success if and only if process $Y({\it l}_i, \tau_i) $ satisfies corresponding conditions.
These considerations yield the following result which clearly implies
statement (iii) of Theorem \ref{th1}.
\vspace{0.2cm}

\noindent
Given these facts and results     we have
\begin{lemma}\label{lln}
There is a coupling between the worker/dust process and  a collection of indicator random variables $\{K_i\}_{ i \leq N\exp(-(c +  \varepsilon/2)\sqrt{\log N})/2} $ so that 
\begin{itemize}
\item
The $K_i $ are i.i.d. with $P(K_i = 1 ) = N^{-3(1- \alpha ) - 4 \varepsilon } $. 
\item
On event $B_N^  c $ where ${\mathbf P}(B_N) \ \rightarrow \ 0 $ as $N \rightarrow \infty $, we have $K_i = 1 \ \Rightarrow \ s_i $ is a success.
\end{itemize}
\end{lemma}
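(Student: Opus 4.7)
\emph{Proof proposal.} The strategy is to exploit the conditional independence of the ``fresh'' dust environments on the halflines $l_i$ (each $l_i \in V(s_i)$ has never been touched before time $s_i$) and to thin the success indicators down to an i.i.d.\ Bernoulli sequence via auxiliary uniforms. First I would assemble the bad event $B_N$ as the union of $F_1 \cup F_2 \cup F_{10}$, the complement of the event in Corollary \ref{llsigma} at the scale $e^{-(c+\varepsilon/2)\sqrt{\log N}}$, the event that fewer than $Ne^{-(c+\varepsilon/2)\sqrt{\log N}}/2$ halflines lie in $V$, and the event that the stopping index $s_M$ with $M := \lceil Ne^{-(c+\varepsilon/2)\sqrt{\log N}}/2 \rceil$ is not strictly smaller than $\sigma_{\exp(-(c+\varepsilon/2)\sqrt{\log N})}$. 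Proposition \ref{bandersnatch}, Corollary \ref{llsigma}, Lemma \ref{goodrate} and standard Poisson bounds then give ${\mathbf P}(B_N) \to 0$ as $N \to \infty$.

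Next, setting $\tilde p_i := {\mathbf P}(s_i \text{ is a success} \mid \mathcal{G}_{s_i})$ (with the convention $\tilde p_i := 0$ whenever $s_i = \infty$), I would verify that on $B_N^c$ and for all $i \le M$ one has $\tilde p_i \ge c\, N^{-3(1-\alpha)-3\varepsilon}$ for some universal $c>0$. The crucial structural fact is that since $l_i \in V(s_i)$ has never been touched before $s_i$, conditionally on $\mathcal{G}_{s_i}$ the dust on $l_i \cap (x_i,\infty)$ is a fresh rate-one Poisson process. The boldness conditions (I)--(IV) therefore depend only on this fresh process and on $\mathcal{G}_{s_i}$-measurable quantities, so the conditional boldness probability is at least $K\, N^{-3(1-\alpha)-3\varepsilon}$ by the Poisson computation together with Lemma \ref{lem3} already invoked in the paragraph preceding the lemma. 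Given boldness, under $F_{10}^c$ the aggregate skipping rate is at least $N^{\alpha-\varepsilon/2}/2$ and an incoming skipper travels to the currently closest dust point, which in this regime is $x_i(2)$ once $x_i$ is cleared; a routine Poisson computation then yields condition (i) with conditional probability of order one, and condition (ii) holds with conditional probability at least $N^{-\varepsilon}$ by the $2\varepsilon$-good property (IV), using $F_2^c$ and Lemma \ref{happytostay} to rule out further workers arriving on $l_i$ during the time window.

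Finally, enlarge the probability space with an i.i.d.\ sequence $(U_i)_{i \ge 1}$ of uniforms on $[0,1]$ independent of everything driving the model, set $p_N := N^{-3(1-\alpha)-4\varepsilon}$ (so that $p_N \le \tilde p_i$ on $B_N^c$ for $N$ large), and define
\[
K_i \;:=\; \mathbf{1}\{\tilde p_i \ge p_N\}\,\mathbf{1}\{s_i \text{ is a success}\}\,\mathbf{1}\{U_i \le p_N/\tilde p_i\} \;+\; \mathbf{1}\{\tilde p_i < p_N\}\,\mathbf{1}\{U_i \le p_N\}.
\]
Since $\tilde p_i$ and $\mathbf{1}\{\tilde p_i \ge p_N\}$ are $\mathcal{G}_{s_i}$-measurable while $U_i$ is independent of $\mathcal{G}_\infty \vee \sigma(U_1,\ldots,U_{i-1})$, direct conditioning gives ${\mathbf P}(K_i = 1 \mid \mathcal{G}_{s_i} \vee \sigma(U_1,\ldots,U_{i-1})) = p_N$ almost surely, so the $K_i$ are i.i.d.\ Bernoulli$(p_N)$; on $B_N^c$ one has $\tilde p_i \ge p_N$ and hence $K_i = 1 \Rightarrow s_i$ is a success. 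The main obstacle I foresee is the rigorous lower bound on the conditional probability of condition (ii): one must couple the two-worker dynamics on $l_i$ after the second arrival $\tau_i$ with the auxiliary process $Y(l_i,\tau_i)$ on a single halfline and exclude spurious extra arrivals on $l_i$ within the $N^{\varepsilon/2}$ window, which is precisely what the combination of $F_2^c$, $F_{10}^c$ and the $2\varepsilon$-good property (IV) is engineered to enforce.
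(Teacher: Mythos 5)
Your thinning construction pins down the marginal law of each $K_i$ but not the joint law, and the joint independence is exactly what the lemma is for. The step ``direct conditioning gives ${\mathbf P}(K_i=1\mid\mathcal{G}_{s_i}\vee\sigma(U_1,\ldots,U_{i-1}))=p_N$, so the $K_i$ are i.i.d.'' requires $K_1,\ldots,K_{i-1}$ to be measurable with respect to the conditioning $\sigma$-field, and they are not: the event $\{s_j \mbox{ is a success}\}$ is resolved only after a time window of length at least $N^{-\varepsilon/2}$ beyond $s_j$ (condition (ii) uses an even longer one), while the stopping times $s_1<\cdots<s_M$ with $M\approx Ne^{-(c+\varepsilon/2)\sqrt{\log N}}/2$ all occur before $\sigma_{\exp(-(c+\varepsilon/2)\sqrt{\log N})}\le 2N^{1-\alpha}e^{-(c+\varepsilon/2)\sqrt{\log N}}$ (Corollary \ref{llsigma}), i.e.\ at typical spacing of order $N^{-\alpha}\ll N^{-\varepsilon/2}$ since $\varepsilon\ll\alpha$. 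So of order $N^{\alpha-\varepsilon/2}$ later stopping times fall inside each success window, $\mathbf{1}\{s_j\mbox{ success}\}\notin\mathcal{G}_{s_i}$ for many $j<i$, and the tower argument collapses. Nor does the construction rescue independence by itself: on $\{\tilde p_i\ge p_N,\ \tilde p_j\ge p_N\}$ one has ${\mathbf E}[K_iK_j]=p_N^2\,{\mathbf E}\bigl[\mathbf{1}_{A_i}\mathbf{1}_{A_j}/(\tilde p_i\tilde p_j)\bigr]$, which equals $p_N^2$ only if the success indicators decouple after conditioning --- precisely the ``slight dependence'' (through the shared global skip flow and worker clocks) that the paper flags as the technical issue. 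The paper removes this dependence structurally rather than by thinning: on the complement of $F_1\cup F_2\cup F_{10}$ it shows that $s_i$ is a success \emph{if and only if} the comparison process $Y(l_i,\tau_i)$ satisfies the corresponding conditions, and $Y(l_i,\tau_i)$ is driven by the fresh rate-one dust on the previously unvisited halfline $l_i$ together with the post-$\tau_i$ clock increments of the two workers there --- randomness essentially disjoint across $i$ --- from which the i.i.d.\ $K_i$ are then built. You invoke $Y(l_i,\tau_i)$ only as a tool for lower-bounding condition (ii); in the paper it is the mechanism that makes independence possible at all.

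Two smaller, repairable points. First, ``$\tilde p_i\ge p_N$ on $B_N^c$'' mixes an adapted quantity with a non-adapted event: $\tilde p_i$ is $\mathcal{G}_{s_i}$-measurable while your $B_N$ contains events (e.g.\ $F_{10}$) visible only later, and this matters because on $\{\tilde p_i<p_N\}$ your second term permits $K_i=1$ without success. The clean fix is to place $\{\exists\, i\le M:\ \tilde p_i<p_N\}$ itself into $B_N$ and bound its probability using the adapted ``has not occurred by time $s_i$'' versions of the bad events, in the sense of the Remark following Proposition \ref{prop1} (compare Lemma \ref{escape}, whose conditional bound is stated for the intersection with complements of bad events for exactly this reason). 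Second, your argument for condition (i) needs the boldness condition (II) explicitly: a skipping worker is routed to $x_i(2)$ only when $x_i(2)$ becomes the globally closest dust point, which is what the bound $x_i(2)-x_i\le N^{-(1-\alpha+\varepsilon)}$ combined with the regular behaviour of $\rho$ under $F_{10}^c$ guarantees; the aggregate skipping-rate bound alone does not deliver an arrival on $l_i$. With these repairs your exponent bookkeeping (a conditional success probability of order $N^{-3(1-\alpha)-3\varepsilon}$ times the condition (i)--(ii) factors, comfortably above $p_N=N^{-3(1-\alpha)-4\varepsilon}$ for large $N$) matches the paper's, but the independence gap above must be closed by the paper's decoupling step, or an equivalent one, before the lemma is actually proved.
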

\noindent
From this the desired result is shown.

\section {Nonexistence of double escape}

\noindent To show that if $\alpha < \frac{2}{3}$, we do not obtain any halflines on which two workers escape, we choose $\varepsilon > 0$ so that
$
100 \varepsilon < 2/3 - \alpha.
$
\vspace{0.2cm}

\noindent
We will need the following result which has affinities with Proposition \ref{bandersnatch}

\begin{lemma} \label{regul}
Fix $ k > 0 $.  With probability tending to one as $N $ tends to infinity, for all $r \leq e^{-k \sqrt { \log (N)} } $,
$$
\sigma_{r + N^{-(1- \alpha )}} \ \geq \ \sigma_r \ + \  1/4.
$$ 
\end{lemma}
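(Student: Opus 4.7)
The plan is a counting argument balancing worker jumps against the density of initial dust in the narrow slice $S_r := [r, r + N^{-(1-\alpha)})$. Heuristically, $S_r$ contains $\sim N^\alpha$ initial dust particles (by Poisson superposition over the rate-$N$ dust on $\mathcal{X}$), and for $\min_{k\ne j}I_k$ to cross $r + N^{-(1-\alpha)}$ essentially all of them must be removed; but the $N^\alpha$ workers combined have jump rate $N^\alpha$, yielding only $\sim N^\alpha/4$ removals within a window of length $1/4$, which is insufficient.

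\textbf{Reduction to a net.} By monotonicity of $\sigma$ in its index, it suffices to establish the inequality for $r$ in the polynomial-size net $\mathcal{N} := \{jN^{-(1-\alpha)} : 0 \le j \le N^{1-\alpha} e^{-k\sqrt{\log N}}\}$; the extension to all $r \le e^{-k\sqrt{\log N}}$ follows from a minor refinement (a slightly finer net together with a tiny slack in $1/4$).

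\textbf{Regularity events.} Two standard high-probability events are established by Chernoff plus a union bound:
(E1) In every time window $[t, t + 1/4]$, for $t$ up to the polynomial horizon supplied by Corollary \ref{llsigma}, the total number of worker jumps is at most $3N^\alpha/10$. This is immediate from the rate-$N^\alpha$ Poisson superposition and a discretization of the starting time.
(E2) For every $r \in \mathcal{N}$, the initial dust count in $S_r$ lies in $[9N^\alpha/10, 11N^\alpha/10]$, and no single halfline contributes more than $\log N$ particles to $S_r$. Poisson concentration and union bound.

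\textbf{Key probabilistic input.} (E3) For every $r \in \mathcal{N}$ with $\sigma_r < \infty$, the number of worker jumps by time $\sigma_r$ whose target lies in $S_r$ is at most $N^\alpha/10$. By definition of $\sigma_r$, every \emph{skip} strictly before $\sigma_r$ has destination at distance $<r$ and cannot hit $S_r$; only \emph{advances} contribute. At each jump time $\tau \le \sigma_r$ with worker at position $a$ on halfline $l$, the strong Markov property produces a rate-$1$ Poisson dust environment above $a$ on $l$, so the conditional probability that the next advance targets $S_r$ is at most $N^{-(1-\alpha)}$ (an exponential next-dust gap constrained to an interval of width $N^{-(1-\alpha)}$). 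Summing these per-jump bounds, the count is stochastically dominated by a $\mathrm{Binomial}(n, N^{-(1-\alpha)})$ variable, where $n$, the total number of jumps by $\sigma_r$, is Poisson$(N^\alpha \sigma_r)$. By Corollary \ref{llsigma}, $\sigma_r \le 2N^{1-\alpha} e^{-(c + \delta)\sqrt{\log N}}$ on a high-probability event (for larger $r$, Theorem \ref{propsigma} gives $\sigma_r = \infty$ and the lemma is vacuous), so the expected count is $O(N^\alpha e^{-(c + \delta)\sqrt{\log N}}) = o(N^\alpha)$. Chernoff and a union bound over $\mathcal{N}$ then yield (E3).

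\textbf{Finish.} Fix $r \in \mathcal{N}$ with $\sigma_r < \infty$ and suppose, for contradiction, that $s := \sigma_{r + N^{-(1-\alpha)}} \le \sigma_r + 1/4$. Let $j$ be the halfline of the worker skipping at time $s$. Since the skip destination equals $\min_{k \ne j} I_k(s) \ge r + N^{-(1-\alpha)}$, every halfline $l \ne j$ satisfies $I_l(s) \ge r + N^{-(1-\alpha)}$, so every initial dust particle of $S_r$ on a halfline other than $j$ has been removed by time $s$. By (E2) this accounts for at least $9N^\alpha/10 - \log N \ge 4N^\alpha/5$ removals; by (E3) at most $N^\alpha/10$ of them occur before $\sigma_r$. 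Hence at least $7N^\alpha/10$ worker jumps must take place within $(\sigma_r, s] \subseteq [\sigma_r, \sigma_r + 1/4]$, contradicting (E1).

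\textbf{Main obstacle.} The technical heart is (E3): quantifying how many slice-targeted removals can occur \emph{before} $\sigma_r$. The proof combines the Markov property of the conditional Poisson dust environment (producing the $N^{-(1-\alpha)}$ per-jump probability bound) with the polynomial-in-$N$ upper bound on $\sigma_r$ from Corollary \ref{llsigma}. Once (E3) is in place, (E1), (E2), and the final counting balance are routine.
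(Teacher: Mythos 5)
Your overall counting scheme (slice population vs.\ jump budget in a window of length $1/4$, discretized over a net of $r$ values) is sound and close in spirit to the paper's, but your key step (E3) has a genuine hole: it silently assumes every jump before $\sigma_r$ is made by a worker already sitting on a halfline. Your per-jump bound --- ``the next advance targets $S_r$ with conditional probability at most $N^{-(1-\alpha)}$'' --- is correct for a worker at position $a$ on a halfline $l$, since its advance target is the first point of a rate-one Poisson process beyond $I_l(t)$, and the chance that an exponential gap lands that point in an interval of width $N^{-(1-\alpha)}$ is at most $N^{-(1-\alpha)}$. But in this paper a jump from the origin (each worker's first jump) is also an \emph{advance}, it occurs before any skip is registered, and its target is $\rho_t$, the \emph{minimum} over up to $N$ conditionally rate-one environments. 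For that minimum, the probability of landing in $S_r$ is not $N^{-(1-\alpha)}$ but can be of order one (at time $0$, ${\mathbf P}(\rho_0 \in [0,N^{-(1-\alpha)})) = 1-e^{-N^{\alpha}}$). Since there are up to $N^{\alpha}$ origin jumps --- exactly the order of the slice population in (E2) --- your Binomial domination and hence the conclusion of (E3) fail as stated for small $r$. The gap is fixable but needs a real argument: for $r \gtrsim N^{-(1-\alpha)}\log^2 N$ one can show $\rho$ cannot reach $r$ before time of order $\log^2 N$, by which time all clocks have rung and no worker remains at the origin, so origin jumps never touch $S_r$; for smaller $r$ one must argue separately that once $\rho_t \geq r$, any jump by a worker located at a dust particle is a skip to distance at least $r$ and therefore triggers $\sigma_r$ within very few jumps, leaving origin workers little time to deplete the slice. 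Neither argument appears in your proposal.

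It is worth seeing why the paper's proof does not need your (E3) at all: instead of counting all initial dust points in the slice, it counts \emph{halflines whose first (closest-to-origin) dust particle lies in the slice} $(r, r+N^{-(1-\alpha)}/2)$ (event $B_{r,2}$, at least $3N^{\alpha}/7$ such halflines w.h.p.). Such a particle is structurally protected from ordinary advances: no worker can ever stand on that halfline strictly between the origin and the first particle, so it can only be removed by a skip to distance greater than $r$ --- which by definition cannot happen before $\sigma_r$ --- or by a jump from the origin. This reduces the whole lemma to two elementary tail bounds (Poisson for the jump count $B_{r,1}$, Binomial for the halfline count $B_{r,2}$), each with failure probability $e^{-c_1 N^{\alpha}}$ uniformly in $r$, so the union over the net of order $N^{1-\alpha}e^{-k\sqrt{\log N}}$ values is immediate and no input from Corollary \ref{llsigma} or Theorem \ref{propsigma} is required (your route leans on both, which is legitimate here since they are proved independently, but is heavier). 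Your approach, once (E3) is repaired to account for origin jumps, would also work; the paper's choice of ``first particles per halfline'' is precisely the device that makes the pre-$\sigma_r$ removal accounting (your main obstacle) unnecessary.
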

\begin{proof}
We fix $r \leq e^{-k \sqrt { \log (N)} } $.  The event \\
$B_r \ = \ \{ \sigma_{r + N^{-(1- \alpha )} / 2} \ \leq \ \sigma_r \ + \  1/4\}$ is contained in the union of events \\
$B_{r,1}=\{$ number of worker jumps between $\sigma_r $ and $\sigma_r + 1/4$ is at least $3N^ \alpha/7 \}$  and \\
$B_{r,2}=\{$ number of halflines so that the first dust particle is in $(r, r  + N^{-(1- \alpha )} / 2)$ is at most $3/7 N^ \alpha \}$.\\

\noindent
By elementary Poisson and Binomial tail probability bounds we find that $P(B_r ) \ \leq e^{-c_1 N^ \alpha } $, for some universal $c_1>0$ and uniformly in $r$, for $N $ large enough.  Taking $ r_ i = (i-1) N^{-(1 - \alpha)} / 10$ for $i = 1,2, \ldots, 10 N^{1 - \alpha }e^{-k \sqrt { \log (N)} } $, we get
$$
P( \cup _ i B_{r_i} ) \ \rightarrow \ 0 
$$
as $N$ tends to infinity, which gives the result.
\end{proof}
\noindent
For worker $u$ we write $D'(u)$ for the event that there exists another worker $v$ so that $u$ and $v$
escapes to infinity on the same halfline and that the final arrival time of $u$ to this halfline precedes that of $v$.  In general, other workers may also escape along the same halfline and their final arrival times may occur before that of $u$.  
For technical reasons we will work with event $D(u) \ = \ D'(u) \cap ( \cup _{i=1}^ {13} F_i)^c $ where $F_i $ are ``bad " events of probability tending to zero as $N $ becomes large.  Most of the $F_i $ have already been introduced, some remain to be defined.\\

\noindent
Let $F_{12} $ be the union of the following three events,
$\{ \sigma_{e^{-(c- \varepsilon ) \sqrt {\log N}}} \  < \ \infty \} $
and 
  $\{$there is a worker which has not escaped by time  $N^{1- \alpha + \varepsilon }\}$ and
 $\{$there is a worker making $2 N^{1- \alpha + \varepsilon } \mbox{ jumps by time } N^{1- \alpha + \varepsilon }\}$.

\noindent
By the first statement of Theorem \ref{th1} (proven at the end of Section 4) and Theorem \ref{propsigma} and elementary Poisson bounds, the probability $P(F_{12}) $ tends to zero as $N$ tends to infinity.

\noindent
Let $F_{13} $ the union of two events,
$\{$there is a worker $u$ so that for some jump (of $u$) while $t \leq N^{1- \alpha + \varepsilon }$, the worker makes less than $N^{\varepsilon / 2} /2$ jumps in the next $N^{\varepsilon / 2} $ time units $\}$ and
$\{$the conclusion of Lemma  \ref{regul} fails to hold$\}$.\\

\noindent
It is clear $P(F_{13}) \ \rightarrow \ 0$ as $N$ becomes large.
We wish to analyze $D(u)$.  We first note that $D'(u) \ = \ \cup_j D'(u,j) $ where $D'(u,j) $ is the event that after the $j$'th  jump of worker $u, \tau^u_j $, but before the $(j+1)$'st a different worker skips to the current halfline of $u$ and that thereafter the two workers do not skip.

\noindent
An advantage of working with the events  $D(u)$ is that since $D(u) \ \subset \  F_{12} ^c$ for each $u$
 
$$
D(u) \ = \ \cup_{j=1} ^ {2 N^{1- \alpha + \varepsilon} } D(u,j) 
$$ 
where $D(u,j) \ = \ D'(u,j) \cap D(u)$.  
Another advantage of using the events $D(u,j) $ is the following
\begin{lemma} \label{CJM}
Event $D(u,j)$ is contained in the event that after the $j$'th jump of $u$ there is a dustparticle on the same halfline within distance $N^{1- \alpha + \varepsilon} $ of $u$,
as $N$ becomes large.
\end{lemma}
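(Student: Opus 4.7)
The plan is to use the two key features of $D(u,j)$: (i) the event $\bigl(\bigcup_i F_i\bigr)^c$ holds, and in particular $F_{12}^c$ forces $\sigma_{\exp(-(c-\varepsilon)\sqrt{\log N})}=\infty$ and $\tau^u_j\le N^{1-\alpha+\varepsilon}$ with at most $2N^{1-\alpha+\varepsilon}$ jumps made by $u$ before $\tau^u_j$; and (ii) by definition of $D'(u,j)$, worker $u$ makes no skip after $\tau^u_j$, so its $(j{+}1)$-st jump is an advance along $l := l_u(\tau^u_j)$. The dust regularity from $F_0^c$ combined with the jump bound gives $d_u \le 4N^{1-\alpha+\varepsilon}$, since after at most $2N^{1-\alpha+\varepsilon}$ advances along $l$ worker $u$ can only occupy one of the first $\sim 4N^{1-\alpha+\varepsilon}$ dustparticle positions on $l$.

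The central step is a deterministic bound on the distance of $u$'s $(j{+}1)$-st advance. By greedy dynamics this distance equals the distance from $d_u$ to the nearest dustparticle on $l$, and it is at most the cost of any skip alternative available to $u$ at time $\tau^u_{j+1}^-$, namely $d_u+\rho'_{\tau^u_{j+1}^-}$, where I set $\rho'_t := \min_{l'\ne l}\rho^{(l')}_t$. An extension of Lemma~\ref{regul} restricted to halflines other than $l$ gives $\rho'_t = O(N^{\varepsilon})$ for $t \le N^{1-\alpha+\varepsilon}$ on the good event. Combining, $u$'s $(j{+}1)$-st advance has length at most $d_u+\rho'_{\tau^u_{j+1}^-}=O(N^{1-\alpha+\varepsilon})$, so the nearest dustparticle to $u$ on $l$ at time $\tau^u_{j+1}^-$---and hence also at the earlier time $\tau^u_j$, since dustparticles are only removed and never added---sits within $O(N^{1-\alpha+\varepsilon})$ of $u$. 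The multiplicative constant is absorbed by starting the section with a slightly smaller $\varepsilon$ (the constraint $100\varepsilon<2/3-\alpha$ leaves plenty of room) and then relaxing it when stating the lemma.

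The main obstacle I anticipate is the restriction of Lemma~\ref{regul} to $\rho'_t$ rather than $\rho_t$. I expect its proof to carry over essentially verbatim: dropping a single halfline out of $N$ changes nothing of substance, since the supply of halflines with small first dustparticle guaranteed by Proposition~\ref{bandersnatch} is of order $Ne^{-k_2\sqrt{\log N}}\gg 1$, and the skip-rate lower bound from Lemma~\ref{goodrate} is likewise unaffected by removing one halfline. As a fallback, one may split according to whether the overall minimum $\rho_t$ is achieved on $l$ or on a different halfline: in the latter case $\rho'_t=\rho_t$ and Lemma~\ref{regul} applies directly, while in the former the second-smallest minimum across halflines can be controlled by the same large-deviation arguments used to prove Lemma~\ref{regul}.
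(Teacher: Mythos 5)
There is a genuine gap, and it starts with a misreading of the statement. The distance $N^{1-\alpha+\varepsilon}$ in the lemma is a sign typo for a \emph{vanishing} distance of order $N^{-(1-\alpha)+\varepsilon}$: this is forced by how the lemma is consumed in Lemma~\ref{probest}, whose bound $N^{-3(1-\alpha)+3\varepsilon}$ factorizes as (probability of a dustparticle within $N^{-(1-\alpha)+\varepsilon}$ of $u$, $\approx N^{-(1-\alpha)+\varepsilon}$ for a rate-one Poisson process) times (probability the environment is not $2$-blocking, $\approx N^{-2(1-\alpha)+2\varepsilon}$ via Corollary~\ref{Willwriteit}), and it matches condition (II) of ``bold'' in Section 5, where the second dustparticle must lie within $N^{-(1-\alpha+\varepsilon)}$ of the first. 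Read literally, with a distance tending to infinity, the lemma is essentially vacuous, and your proof of that literal version — however carefully executed — cannot support Lemma~\ref{probest}: the event you control has probability close to $1$, not $N^{-(1-\alpha)+\varepsilon}$, so the union bound $N^{\alpha}\cdot 2N^{1-\alpha+\varepsilon}\cdot {\mathbf P}(D(u,j))$ in Section 6 would not go to zero. Your argument (bounding $d_u$ by the jump count and $F_0^c$, and bounding the $(j{+}1)$-st advance by the skip alternative $d_u+\rho'$) never touches the mechanism that makes the true statement hold.

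That mechanism, in the paper's proof, is a \emph{timing contradiction} pairing the two halves of $F_{13}$: on $F_{13}^c$ worker $u$ makes at least $N^{\varepsilon/2}/2$ jumps in the next $N^{\varepsilon/2}$ time units, so on $D(u,j)$ the second worker must skip onto $u$'s halfline within $N^{\varepsilon/2}$ of $\tau^u_j$; but that skip lands on the nearest dustparticle ahead of $u$, so if the gap ahead of $u$ at time $\tau^u_j$ exceeded $N^{-(1-\alpha)+\varepsilon}$, the level at which skips land — which is at most $d_u$ just before $\tau^u_j$, since the particle $u$ jumps to is still available — would have to climb by more than $N^{-(1-\alpha)+\varepsilon}$ within that window, and by Lemma~\ref{regul} each increment of $N^{-(1-\alpha)}$ costs at least $1/4$ time unit, i.e.\ at least $N^{\varepsilon}/4\gg N^{\varepsilon/2}$ time units in all ($F_1^c$ and $F_{12}^c$ keep everything in the range $r\le e^{-k\sqrt{\log N}}$ where Lemma~\ref{regul} applies). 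None of this appears in your proposal. A secondary problem: your ``extension of Lemma~\ref{regul}'' to conclude $\rho'_t=O(N^{\varepsilon})$ for all $t\le N^{1-\alpha+\varepsilon}$ oversteps the lemma's stated range $r\le e^{-k\sqrt{\log N}}$, and its proof mechanism (a supply of order $N^{\alpha}$ halflines whose \emph{first} dustparticle lies in an interval of length $N^{-(1-\alpha)}/2$ near $r$, which relies on the rate-$N$ density of first particles near the origin, as in Proposition~\ref{bandersnatch}) does not carry over verbatim to such scales — so even the trivial version is not fully justified as written, quite apart from it being the wrong target.
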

\begin{proof}
If $F_{13} $ does not occur, then worker $u $ in the time interval  $( \tau^u_j , \tau^u_j +N^{ \varepsilon / 2  } )$  must make at least  $ N^{ \varepsilon / 2  } / 2 >> \ \log(N)$ jumps.  So  if $F_1 $ 
does not occur, then in this time interval 
either $u$ skips or all dustparticle up to distance $1$ are cleaned.  But, again if $F_{13} $ does not occur so the conclusions of Lemma \ref{regul} hold, then unless there is a dustparticle within distance $N^{1- \alpha + \varepsilon } $ of $u$ at time $\tau^u_j $, no particles can skip onto the halfline of u before $\sigma_1 $ which is infinity if $F_{12}$ does not occur.  

\end{proof}

\noindent
In what follows, we need the following notion. An array of points $\mathcal{P} $ on interval $(x, \infty ) $ is said to be {\it $k-(\delta)$blocking} if there exists $(y,2y] \ \subset \ ( \delta, 1) $ so that 
$ ( \mathcal{P} -x) \cap (y,2y]$ has fewer than $k$ points. 
Here $ \mathcal{P} -x $ is simply the translation  of $ \mathcal{P} $ for $x$ units to the left.
This is discussed further in Section 8.

\begin{lemma} \label{block}
If a worker $w$ skips to a halfline $l$ at random time $t \ \leq \ \sigma_ {e^{- (c- \varepsilon) \sqrt {\log(N)}}. }$ where $u$ is also located then if the 
dustparticle environment
to the right of $w$ is  $ 2-(2e^{- (c- \varepsilon) \sqrt {\log(N)}}$) blocking then it is not possible that two of the workers currently on the halfline  escapes on $l$ (without subsequent skipping)
before time $\sigma_{ e^{- (c- \varepsilon) \sqrt {\log(N)}}}$ .
\end{lemma}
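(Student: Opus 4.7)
The plan is to argue by contradiction. Assume two workers $A$ and $B$ currently on $l$ both avoid skipping throughout $[t, \sigma]$, where $\sigma := \sigma_{e^{-(c-\varepsilon)\sqrt{\log N}}}$, and derive a contradiction with the blocking hypothesis. First, I unpack the hypothesis: it produces some $y \in (\delta, 1)$, with $\delta := 2e^{-(c-\varepsilon)\sqrt{\log N}}$, such that the interval $J := (x+y,\, x+2y]$ on $l$ contains at most one dustparticle $z$ at time $t$. For every $s \in [t, \sigma]$ one has $\rho_s \le \delta/2 < y/2$, so any advance of a worker on $l$ from position $P$ to the next untaken particle $Q$ must satisfy the no-skip inequality $Q \le 2P + \rho_s^{-l} < 2P + y/2$.

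Label $A$ as the first of the two workers to complete a jump whose landing point strictly exceeds $x+2y$ and $B$ as the second, and write $P_A, P_B \le x+y$ for their respective positions immediately before that decisive jump. The first structural step is to rule out $A$ bypassing a still-present $z$: if $z$ were untaken at the moment of $A$'s decisive jump, then $z \in (P_A, Q_A)$ would be a strictly closer untaken particle than the landing point $Q_A$, contradicting the greedy rule. Hence, when $z$ exists, $A$'s decisive jump lands on $z$ and a further advance carries $A$ past $x+2y$; when $z$ is absent, $J$ is already empty. In either case, $J$ has been cleared before $B$ makes its decisive jump.

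Now apply the no-skip inequality to $B$: from $Q_B \le 2P_B + y/2$ and $Q_B > x+2y$ we deduce $P_B > x/2 + 3y/4$. Moreover the whole interval $(P_B, Q_B)$ must be free of untaken particles at the moment $B$ jumps; since $J$ is empty by the previous step, this forces every dustparticle of $l$ in $(P_B, x+y]$ to have been consumed along $A$'s trajectory. The same no-skip reasoning applied to $A$ pins $P_A$ to a narrow window just below $x+y$ (or just below $z$, when $z$ exists), and $A$'s strictly increasing sequence of advances from its time-$t$ position up to $P_A$ must exhaust the dust inside $(P_B, x+y]$.

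The main obstacle is to turn these packed constraints into an outright contradiction with the blocking count. The key point is that since $P_B > 3y/4$, the doubling interval $(P_B, 2P_B]$ overlaps the blocking window $(y, 2y]$ substantially; after $A$ consumes $z$ the blocking hypothesis forces the untaken-particle count in $(y, 2y]$ to be zero from $B$'s viewpoint, and yet the no-skip inequality demands that both $P_A$ and $P_B$ be distinct dustparticles in a very narrow band just beneath $x+y$ while also forcing $A$'s trajectory to exhaust $(P_B, x+y]$. Carrying this combinatorial bookkeeping through --- with a case distinction on whether $A$ begins at time $t$ already past $x+y$, and in that edge case a brief appeal to the single-halfline coupling of Proposition \ref{coup} together with Lemma \ref{lem1} to preclude escape --- yields the contradiction. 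Expressing this tension cleanly is where the real work of the proof lies.
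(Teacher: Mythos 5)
You should first know that the paper itself offers \emph{no} proof of Lemma \ref{block}: it is stated bare and immediately fed into Lemma \ref{probest} as though it were a self-evident deterministic consequence of the greedy rule, so there is no official argument to compare against and your attempt must stand on its own. Your preliminary reductions are the natural ones (the no-skip doubling constraint $Q \le 2P + r$, the observation that the greedy rule forbids the leading worker from bypassing a still-present gap particle $z$, hence that the window is empty when the trailing worker confronts it). But the proof is incomplete at precisely the decisive point, and you concede this yourself: ``Carrying this combinatorial bookkeeping through \dots\ yields the contradiction. Expressing this tension cleanly is where the real work of the proof lies.'' An argument that ends by asserting that unspecified bookkeeping yields the contradiction has not proven the lemma, and here the deferral hides a real obstruction rather than routine work.

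Concretely: write $\delta = 2e^{-(c-\varepsilon)\sqrt{\log N}}$, let $x \le \delta/2$ be the landing point and $(x+y,\,x+2y]$ the blocking window in absolute coordinates, with $y > \delta$. The trailing worker $B$ may legitimately occupy any cleaned particle $P_B \le x+y$, and its no-skip reach is $2P_B + r \le (x+2y) + (x+r)$; that is, it \emph{overshoots} the right end of the window by up to $x+r > 0$ (of order $\delta$, or as much as $\approx y/2$ under your cruder bound $\rho_s < y/2$). The blocking hypothesis constrains only $(x+y, x+2y]$ and says nothing about $(x+2y,\, x+2y+x+r]$, so a single dustparticle sitting just beyond the window lets $B$ cross an \emph{empty} window without skipping. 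Your own inequalities confirm there is no tension: $P_B > x/2 + 3y/4$ is perfectly compatible with $P_B \le x+y$, so the ``packed constraints'' are mutually consistent and no contradiction follows from what you have established. Closing the gap requires handling this margin explicitly --- e.g.\ exploiting $x, \rho_s \le \delta/2 \le y/2$ with a window carrying slack, or accepting that the literal statement only holds modulo corner events absorbed into the $F_i$'s (which, given that Corollary \ref{Willwriteit} is insensitive to such constant adjustments, is evidently the authors' intent). Two secondary unjustified steps compound this: your blanket claim that $\rho_s \le \delta/2$ for all $s \in [t,\sigma]$ does not follow from the definition of $\sigma$, which is triggered only at skip times, so $\rho_s$ (and a fortiori the distance to the nearest off-line particle, which is what actually enters the advance criterion and is \emph{bounded below}, not above, by $\rho_s$) can exceed the threshold between skips while $s < \sigma$; and your assertion that $A$'s trajectory ``exhausts'' $(P_B, x+y]$ ignores that the two workers may leapfrog, with the trailing worker jumping past the leader to the frontier particle, which changes the bookkeeping you are relying on.
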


\vspace{0.2cm}




\noindent
The above result, the Markov property, Corollary \ref{Willwriteit} (and the definition of event $D(u)$) immediately imply 
\begin{lemma} \label{probest}
For all $u$ and $ j \leq  \ 2N^{1- \alpha + \varepsilon
}$ and for all $N$ sufficiently large, 
\begin{align*}\ 
 {\mathbf P}(D(u,j) ) \ < \ N^{ -3(1- \alpha) + 3 \varepsilon }.
 \end{align*} 
  \end{lemma}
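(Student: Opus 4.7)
The plan is to condition on $\mathcal{G}_{\tau^u_j}$ and exploit the strong Markov property of the underlying Poisson structures. Write $l = l_u(\tau^u_j)$ and $z = d_u(\tau^u_j)$. By the conditional description of Section 3, given $\mathcal{G}_{\tau^u_j}$ the dustparticles on $l$ strictly to the right of $z$ form a rate-$1$ Poisson process on $(z, \infty)$, independent of the rate-$1$ Poisson jump clocks of the workers. Since $D(u,j) \subset D(u) \subset \bigcap_{i=1}^{13} F_i^c$, I can work throughout in the good regime $t \leq \sigma_{e^{-(c-\varepsilon)\sqrt{\log N}}}$ with the usual laws-of-large-numbers for worker activity in force.

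My strategy is to split the probability of $D(u,j)$ into two factors. First, by Lemma \ref{CJM}, $D(u,j)$ forces the existence of a dustparticle on $l$ within distance $r_0 := N^{-(1-\alpha+\varepsilon)}$ of $u$, and by the conditional $\mathrm{Poisson}(1)$ law this has probability at most $r_0 = N^{-(1-\alpha)-\varepsilon}$. Second, given such a nearby dustparticle, the remainder of $D(u,j)$ requires that a second worker skip onto $l$ and that both $u$ and this worker then escape without any subsequent skipping. This is exactly the two-worker single-halfline escape problem studied in Section 8: Lemma \ref{block} implies that joint escape is possible only on a non-blocking dust environment in the macroscopic windows $(y, 2y) \subset (2e^{-(c-\varepsilon)\sqrt{\log N}}, 1)$, and Corollary \ref{Willwriteit} supplies an upper bound of order $N^{-2(1-\alpha)+2\varepsilon}$ on the joint-escape probability. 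The nearby-dustparticle event involves only the microscopic interval $(z, z+r_0)$ of the Poisson process, while the blocking condition lives on the disjoint macroscopic intervals, so the two conditions depend on independent portions of the dust and the bounds multiply:
\[
\mathbf{P}(D(u,j) \mid \mathcal{G}_{\tau^u_j}) \leq N^{-(1-\alpha)-\varepsilon} \cdot N^{-2(1-\alpha)+2\varepsilon} = N^{-3(1-\alpha)+\varepsilon},
\]
well inside the asserted bound $N^{-3(1-\alpha)+3\varepsilon}$.

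The main obstacle is justifying that the genuine multi-worker dynamics on $l$ (which by $F_1^c$ carries at most $\log N$ workers) can legitimately be compared to the two-worker single-halfline model of Section 8. Two observations make the comparison work. Additional workers on $l$ can only remove further dustparticles, which can only \emph{decrease} the joint-escape probability relative to a two-worker configuration on a clean $\mathrm{Poisson}(1)$ environment. Moreover, by $F_{12}^c$ and $F_{13}^c$, no new workers can arrive on $l$ after time $\sigma_{e^{-(c-\varepsilon)\sqrt{\log N}}}$ and the number of jumps by any worker in a short time window is tightly controlled, so the blocking estimate of Section 8 applies with only harmless modifications. Averaging the conditional bound over $\mathcal{G}_{\tau^u_j}$ and taking a union over $u$ and $j \leq 2N^{1-\alpha+\varepsilon}$ produces the stated uniform bound.
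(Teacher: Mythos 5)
Your argument is correct and is essentially the paper's own proof: Lemma \ref{CJM} forces a dustparticle near $u$ at time $\tau^u_j$, Lemma \ref{block} forces the dust environment beyond it to be non-$2$-blocking, and the Markov property together with Corollary \ref{Willwriteit} lets the two Poisson estimates (on disjoint scales) multiply before the union bound over $u$ and $j$. Two minor remarks: the distance in Lemma \ref{CJM} should be read as $N^{-(1-\alpha)+\varepsilon}$ (the paper's exponent $1-\alpha+\varepsilon$ is a sign typo), which makes the product come out as exactly $N^{-3(1-\alpha)+3\varepsilon}$ rather than your stronger $N^{-3(1-\alpha)+\varepsilon}$; and your closing monotonicity digression about extra workers is unnecessary (and delicate, as the paper's appendix and introduction warn), since Lemma \ref{block} is a deterministic statement that already applies to any two workers within the genuine multi-worker dynamics.
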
 
\vspace{0.2cm}
\begin{proof}
This simply follows since by Lemmas \ref{CJM} and \ref{block}.  This gives that for $D(u,j) $ to occur there must be a dustparticle within $N^{1- \alpha + \varepsilon } $ of $u$ at time $\tau^u_j $ and that 
and the  dustparticle environment to the right of this dustparticle cannot be $2-$blocking.  The bound now follows from the Markov property and Corollary \ref{Willwriteit}
\end{proof}

\noindent
{\it Proof of Theorem \ref{th1} (III)}
\noindent
The  bound provided by Lemma \ref{probest} is sufficient to show part (III) of Theorem \ref{th1}. We have that the probability of two workers escaping on the same halfline is less than
$$
P (\cup_{i=1} ^{ 13} F_i ) \ + \ P ( \cup _ u D(u) ) \ =  P (\cup_{i=1} ^{ 13} F_i ) \ + \ P( \cup _ u \cup \cup_{j=1} ^ {2 N^{1 -\alpha + \varepsilon}} D(u,j) )
$$
$$
\leq \ P (\cup_{i=1} ^{ 13} F_i ) \ + \ N^ \alpha 2 N^{1 -\alpha + \varepsilon}N^{ -3(1- \alpha) + 3 \varepsilon }
$$

\noindent
which is $P (\cup_{i=1} ^{ 13} F_i ) \ +\ 2 N^{\alpha  -2(1- \alpha) + 4 \varepsilon }$ $= \ o(1) \ + \ 2N^{-(2-3 \alpha ) +4 \varepsilon} $ which tends to zero by our assumption on $ \varepsilon $.

\section{Comments on the Proof of Theorem \ref{th2}}

\noindent
The arguments given in the two preceding sections readily generalize to the case of more than two workers escaping together.  So we provide only a sketch of the proof of Theorem \ref{th2}.  
\vspace{0.2cm}

\noindent
To show that event $A_m $, as defined in Section 2, is likely for $ \frac{2m-2}{2m-1} \ < \ \alpha $ as $N $ becomes large, we fix $0 \ < \  \varepsilon \ \ll \ \alpha -  \frac{2m-2}{2m-1} $ and operate on 
times less than $e ^ {-(c+ \varepsilon ) \sqrt { \log (N) } }$ for which the evolution of $\rho _.$ is predictable and governed  by laws of large numbers.  Proposition \ref{bandersnatch} implies that we will be visiting many halflines for the first time
at a near deterministic rate.  We simply modify the definition of {\it bold} given in section 5: (II) is changed to require that beyond the first dustparticle there are $m-1$ dustparticles within distance $N^{1- \alpha - \varepsilon }$ and (IV)
is changed to require that the dustparticle environment
 is such that $m$ particles can escape with ``reasonable" probability. 
 \vspace{0.2cm}

\noindent
To show that for $\alpha < \ \frac{2m-2}{2m-1} $ event $A_m $ has small probability for $N$ large we first argue, as in Section 6, that this event is essentially the event that there is a halfline with a dustparticle close to the origin so that there are
$m-1 $ other dustparticles within  $N^{1- \alpha +\varepsilon } $ of the first (for $ 0 \ < \ \varepsilon \ \ll  \frac{2m-2}{2m-1} -  \alpha $) and that the environment is not $m$-blocking (rather than $2$-blocking).  Thereafter the argument is the same.

\section{Auxiliary results -- Cleaning process on the halfline}
\begin{lemma}\label{lem1} 

Consider an auxiliary model, with a single halfline and a single worker which is initial ly located at distance $x>0$ from the origin.
Assume that initially there are infinitely many dust particles on the halfline, with one locating at the origin and all the others at points of a rate-1 Poisson process on the set $(x,\infty )$. The worker always jumps to the closest existing dust particle and 
removes it. Let $D(x)$ be the event that the dust particle at the origin will be never removed and let $P_1(x)$ be its probability. Then
\begin{equation}\label{F1}
\lim_{x\to 0} \frac{\log P_1(x)}{\log^2 x} = -\frac{1}{2\log 2}.
\end{equation}
\end{lemma}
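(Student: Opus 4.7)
Write $f(y):=P_1(y)$. My plan is to derive a self-similar functional equation for $f$ and then obtain matching upper and lower bounds on $\log f(y)$ by iterating the equation (upper bound) and by constructing an explicit escape-forcing event (lower bound). Let $E_1,E_2,\ldots$ be i.i.d.\ $\mathrm{Exp}(1)$ (the gaps between successive Poisson dust points on $(y,\infty)$, by the memoryless property), and set $\xi_n=y+E_1+\cdots+E_n$, the worker's position after $n$ advances. The origin particle is never removed iff $E_{n+1}<\xi_n$ for every $n\geq 0$. Conditioning on $E_1$ gives
\begin{equation*}
f(y)=\int_0^y e^{-t}f(y+t)\,dt=e^y\int_y^{2y}e^{-u}f(u)\,du,
\end{equation*}
so that $g(y):=e^{-y}f(y)$ satisfies the clean relation $g(y)=\int_y^{2y}g(u)\,du$. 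A monotone coupling of the $E_i$ shows $f$ is nondecreasing; since $f(y)\to 1$ as $y\to\infty$, $f(c)>0$ for every $c>0$.

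For the upper bound, iterate $n$ times to obtain $g(y)=\int_{\Omega_n(y)}g(u_n)\,du_1\cdots du_n$ on the region $\Omega_n(y)=\{y<u_1<\cdots<u_n:\ u_k<2u_{k-1}\}$. Bound $g(u_n)\leq 1$ and change variables to $v_k:=u_k/u_{k-1}\in(1,2)$ (with $u_0:=y$); the Jacobian of this substitution is $y^n v_1^{n-1}v_2^{n-2}\cdots v_{n-1}$, and the resulting integral factorizes to
\begin{equation*}
g(y)\leq y^n\prod_{m=1}^{n-1}\frac{2^{m+1}-1}{m+1}\leq\frac{y^n\cdot 2^{n(n+1)/2}}{n!}.
\end{equation*}
Choosing $n=\lceil\log_2(1/y)\rceil$ and applying Stirling, the leading term $n\log y+\tfrac{n(n+1)}{2}\log 2=-\tfrac{n^2\log 2}{2}(1+o(1))=-\tfrac{\log^2 y}{2\log 2}(1+o(1))$, while $\log n!=O(n\log n)=o(\log^2 y)$. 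Hence $\log f(y)\leq -\log^2 y/(2\log 2)\cdot(1+o(1))$.

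For the matching lower bound, force escape on an explicit event. Fix a summable sequence $\delta_n=n^{-2}$, set $N=\lceil\log_2(1/y)\rceil$, and consider
\begin{equation*}
B:=\bigcap_{n=1}^N\bigl\{E_n\in((1-\delta_n)\xi_{n-1},\,\xi_{n-1})\bigr\}.
\end{equation*}
On $B$, the escape inequalities $E_n<\xi_{n-1}$ hold for every $n\leq N$, and $\xi_n\geq y\prod_{k\leq n}(2-\delta_k)\geq c\cdot 2^n y$ where $c=\prod_k(1-\delta_k/2)>0$, so in particular $\xi_N\geq c$. The Markov computation ${\mathbf P}(B_n\mid\xi_{n-1})=e^{-\xi_{n-1}}(e^{\delta_n\xi_{n-1}}-1)\geq e^{-1}\xi_{n-1}\delta_n$, valid while $\xi_{n-1}\leq 1$, combined with $\xi_{n-1}\geq c\cdot 2^{n-1}y$, yields
\begin{equation*}
{\mathbf P}(B)\geq\prod_{n=1}^N\frac{c'\cdot 2^{n-1}y}{n^2}=\frac{(c')^N\,y^N\,2^{N(N-1)/2}}{(N!)^2},
\end{equation*}
whose logarithm equals $-\log^2 y/(2\log 2)\cdot(1+o(1))$, since $2\log N!=O(N\log N)$ is of lower order. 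Combined with $f(y)\geq{\mathbf P}(B)\cdot f(c)$, this gives the matching lower bound, and \eqref{F1} follows.

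\emph{Main obstacle.} Both bounds are tight at leading order, so the technical core is second-order bookkeeping: a $\log^2 y$-scale quantity must be controlled to precision $o(\log^2 y)$. In the upper bound this demands the exact Jacobian and Stirling's formula; in the lower bound the choice of a summable $\delta_n\to 0$ is essential --- any constant $\delta$ would yield the suboptimal constant $1/(2\log(2-\delta))$, while a non-summable $\delta_n$ would destroy the calibration $\xi_n\sim 2^n y$. The sharp constant $1/(2\log 2)$ arises precisely because escape proceeds on the geometric doubling scale $\xi_n\sim 2^n y$, requiring $N\sim\log_2(1/y)$ consecutive ``lucky'' steps.
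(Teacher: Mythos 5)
Your proof is correct and takes essentially the same route as the paper: the identical renewal identity $P_1(y)=\int_0^y e^{-t}P_1(y+t)\,dt$ iterated on the doubling scale $n\approx\log_2(1/y)$ for the upper bound, and for the lower bound the same forcing event pinning each gap $E_n$ into $((1-\delta_n)\xi_{n-1},\xi_{n-1})$ with a summable slack sequence (the paper uses $\varepsilon_n=c\,n^{-\gamma}$, $\gamma>2$, in place of your $\delta_n=n^{-2}$), landing at a fixed level $c$ where $P_1(c)>0$. The only cosmetic difference is that your upper bound integrates the iterated equation exactly via the substitution $v_k=u_k/u_{k-1}$ (yielding the slightly sharper $y^n2^{n(n+1)/2}/n!$) instead of invoking monotonicity to reduce to the scalar recursion $P_1(x)\le x\,P_1(2x)$, which is immaterial at the $\log^2 y$ scale.
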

\noindent Here is an extension of Lemma \ref{lem1} onto the case of $k$ workers 
located at the same point. 

\begin{lemma}\label{lem2}
Consider a single halfline and $k$ workers located initially at the same point at distance $x>0$ from the origin.
Assume there are dust particles that are located at points of a rate-1 
Poisson process at $(x,\infty )$
and at the origin. Each worker has its own Poisson rate-1 clock, and the clocks ring independently of each other.
The workers always jump to the closest dust particle and 
remove it. Let $D_k(x)$ be the event that the dust particle at the origin will be never removed and $P_k(x)$ its probability. Then
\begin{equation}\label{F2}
\lim_{x\to 0} \frac{\log P_k(x)}{\log^2 x} = -\frac{k}{2\log 2}.
\end{equation}
\end{lemma}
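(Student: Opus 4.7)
The plan is to establish matching upper and lower bounds of the form $P_k(x) = \exp\!\bigl(-\frac{k}{2\log 2}\log^2 x\,(1+o(1))\bigr)$ as $x \to 0$, mirroring Lemma \ref{lem1} but adapted to $k$ workers sharing a common dust environment. The driving heuristic is that the origin dust survives essentially if and only if, at each ``doubling'' scale $y_j := 2^j x$ for $j=0,1,\dots,J$ with $J \approx \log(1/x)/\log 2$, the interval $(y_j,2y_j)$ contains at least $k$ dust particles --- each worker needs its own forward target to avoid being pushed back. The product of these $k$-fold thickness probabilities across scales evaluates, via the same Euler-type sum as in Lemma \ref{lem1}, to $\exp(-k\log^2 x/(2\log 2))$ up to subexponential factors (replacing the exponent $1$ by $k$ throughout).

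For the lower bound I would introduce a favourable environment event $E_{\text{env}}$ requiring, for every $j \leq J$, that $(y_j,5y_j/4)$ contain at least $k$ dust particles and $(5y_j/4,2y_j)$ be empty, together with a Lemma \ref{lem1}-type escape structure on $(1,\infty)$. By independence of the Poisson process on disjoint intervals and $\mathbf{P}(\mathrm{Poisson}(\lambda)\geq k)\sim \lambda^k/k!$ as $\lambda \to 0$, one obtains $\mathbf{P}(E_{\text{env}}) \geq \exp(-(k+o(1))\log^2 x/(2\log 2))$. Conditional on $E_{\text{env}}$, I would argue that all $k$ workers escape with probability bounded below by a constant in $x$: scale-by-scale, the emptiness of $(5y_j/4,2y_j)$ prevents overshooting, and the independent Poisson clocks give uniformly positive probability that the $k$ workers distribute themselves one-per-particle onto the $k$ targets in $(y_{j+1},5y_{j+1}/4)$ in a non-colliding order.

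For the upper bound, let $G$ be the blocking event that some $j \leq J$ has $(y_j,2y_j)$ containing fewer than $k$ dust particles. A Poisson tail estimate gives $\mathbf{P}(G^c) \leq \exp(-(k-o(1))\log^2 x/(2\log 2))$, so it suffices to show that $\{$origin dust survives$\} \subseteq G^c$ up to a negligible event. Heuristically: letting $j^\ast$ be the smallest blocking scale, each of the $k$ workers must at some time be located at a point $y \in (y_{j^\ast},2y_{j^\ast})$ with a strictly closer forward dust target than the origin behind it; since each dust particle is used at most once, this requires $k$ available particles in $(y_{j^\ast},2y_{j^\ast})$, contradicting the definition of $j^\ast$.

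The \textbf{main obstacle} is making this last implication rigorous in the presence of worker interactions: the $k$ workers may overtake each other, clean dust out of order, and their clock orderings are random, so a naive pigeonhole argument is not sufficient. I would handle this either (a) by coupling the $k$-worker process with a system of $k$ ``virtual'' single-worker processes driven by the same clocks but on independent copies of the Poisson environment, leveraging the monotonicity that adding more workers to the same halfline can only accelerate dust depletion and hence lower escape probabilities; or (b) by induction on $k$, conditioning on the first worker pushed back near the origin and reducing to the $(k-1)$-worker problem while invoking Lemma \ref{lem1}. Either route rests on the single-worker bound and careful bookkeeping; the Poisson-sum computation producing the constant $k/(2\log 2)$ is the easy part, and the shared-environment interactions are where the real technical work lies.
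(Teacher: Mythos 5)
Your overall architecture is the same as the paper's: dyadic scales, $k$ dust particles required per doubling scale, Poisson clock-ordering events, and monotonicity, with the product of per-scale Poisson probabilities producing the exponent $k/(2\log 2)$. Your upper bound is sound, and in fact the ``main obstacle'' you flag largely dissolves: if the origin particle survives, every worker escapes, and for each worker one can consider the last time it sits at a position $z\le y_j$; a jump from $z$ that avoids the origin has length strictly less than $z$, so it lands on a particle in $(y_j,2y_j)$, and distinct workers remove distinct particles, so survival forces at least $k$ particles in every interval $(y_j,2y_j)$ by direct pigeonhole --- overtaking and out-of-order cleaning are irrelevant. Neither of your fallback devices is needed, and route (a) in particular is unjustified as stated: no domination of the shared-environment process by $k$ independent single-worker processes on independent environments is established (the coincidence of exponents $P_1(x)^k \approx P_k(x)$ is not a coupling). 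The paper obtains the same upper bound by iterating $P_k(x)\le \frac{x^k}{k!}\,P_k(2x)$ via its two monotonicity properties.

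The genuine gap is in your lower bound: the environment event $E_{\mathrm{env}}$, with clusters in $(y_j,\frac54 y_j)$ and $(\frac54 y_j,2y_j)$ empty, does \emph{not} guarantee that forward jumps beat the origin, even with ideal clock orderings. The right endpoint of cluster $j{+}1$ is $\frac54 y_{j+1}=\frac52 y_j$, which exceeds \emph{twice} the left endpoint $y_j$ of cluster $j$. On the positive-probability sub-event of $E_{\mathrm{env}}$ where the particles of cluster $j$ all lie in $(y_j,\frac{101}{100}y_j)$ and those of cluster $j{+}1$ all lie in $(\frac{12}{5}y_j,\frac52 y_j)$, any worker that must leave cluster $j$ sits at some $z<\frac{101}{100}y_j$ while the nearest forward particle is at distance at least $\frac{12}{5}y_j-z>z$; it therefore removes the origin particle deterministically, whatever the clocks do. So the escape probability conditional on $E_{\mathrm{env}}$ is not bounded below uniformly --- it is zero on part of $E_{\mathrm{env}}$ --- and your lower bound fails as written. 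This is precisely why the paper uses \emph{shrinking} clusters $(2^nB_nx,\,2^nB_{n-1}x)$ with $B_n=\prod_{i\le n}(1-\varepsilon_i)$ and $\varepsilon_n=c\,n^{-\gamma}$: there the right end of cluster $n{+}1$ equals $2\cdot 2^nB_nx$, exactly twice the \emph{left} end of cluster $n$, so from any point of cluster $n$ the entire next cluster is strictly preferred to the origin, and the narrowing costs only $\bigl(\prod_{n\le m}\varepsilon_n\bigr)^k=\exp\bigl(o(\log^2x)\bigr)$. Two further repairs: require \emph{exactly} $k$ particles per cluster rather than at least $k$ (with surplus particles, leftovers behind the pack can pull a worker backwards to a position from which the origin wins), paired with the paper's clock event $J_n$ that each worker rings exactly once in each block of $k$ rings; and note that even then the conditional escape probability is a constant \emph{per scale}, hence $p^m=\exp(-O(|\log x|))$ overall --- a power of $x$, not ``a constant in $x$'' as you claim, though this is harmless for the $\log^2x$ asymptotics.
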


\noindent
We need now the notation of $k-(\delta)$ {\it blocking array} that has been introduced
close to the end of Section 6.
The proof of Lemma \ref{lem2} immediately implies
\begin{cor} \label{Willwriteit}
For any $x > 0 $ and $\gamma \ > \ 0 $ and for all $\delta $ sufficiently small, 
the probability that a rate one Poisson process of points on $(x, \infty ) $ is not $k-( \delta)$blocking is less than
$$
e ^{- ( \log( \delta) ) ^ 2 k(1- \gamma) / 2 \log(2)}
$$
\end{cor}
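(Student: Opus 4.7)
\medskip

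\noindent\textbf{Proof proposal.} The plan is to reduce the blocking condition to a finite independent product via a dyadic skeleton of scales inside $(\delta,1)$. After translating the Poisson process by $-x$, we work with a rate-one Poisson process $\mathcal{P}$ on $(0,\infty)$ and we must upper bound the probability that for \emph{every} interval $(y,2y]\subset(\delta,1)$, the set $\mathcal{P}\cap(y,2y]$ has at least $k$ points. Being a universal upper bound, it suffices to impose this only for the dyadic intervals
\begin{equation*}
J_i:=(2^{-i-1},2^{-i}],\qquad i=0,1,\ldots,I,
\end{equation*}
where $I=\lfloor\log_2(1/\delta)\rfloor-1$ so that $J_I\subset(\delta,1)$. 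Each $J_i$ is of the prescribed form with $y=2^{-i-1}$, and the $J_i$ are pairwise disjoint, so the counts $N_i:=\#(\mathcal{P}\cap J_i)$ are independent Poisson variables with means $\lambda_i=2^{-i-1}$.

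First I would record the per-interval bound. Since $\lambda_i\le 1/2$, the Taylor tail estimate gives
\begin{equation*}
\mathbf{P}(N_i\ge k)=e^{-\lambda_i}\sum_{j\ge k}\frac{\lambda_i^{j}}{j!}\le \frac{\lambda_i^{k}}{k!}\cdot\frac{1}{1-\lambda_i/(k+1)}\le \frac{C_k\,2^{-(i+1)k}}{k!},
\end{equation*}
for a universal constant $C_k$. Taking the product over $i=0,\ldots,I$ by independence,
\begin{equation*}
\mathbf{P}(\text{not }k\text{-}(\delta)\text{blocking})\le\prod_{i=0}^{I}\mathbf{P}(N_i\ge k)\le \Bigl(\tfrac{C_k}{k!}\Bigr)^{I+1}2^{-k\sum_{i=0}^{I}(i+1)}=\Bigl(\tfrac{C_k}{k!}\Bigr)^{I+1}2^{-k(I+1)(I+2)/2}.
\end{equation*}

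Taking logarithms and using $I+1\sim \log_2(1/\delta)=-\log\delta/\log 2$, the dominant term is
\begin{equation*}
-\,k\,\frac{(I+1)(I+2)}{2}\log 2\;\sim\;-\,\frac{k(\log\delta)^{2}}{2\log 2},
\end{equation*}
while the factor $(C_k/k!)^{I+1}$ is of order $e^{O(\log(1/\delta))}$ and therefore negligible compared to the $(\log\delta)^{2}$ leading term. The slack of $(1-\gamma)$ in the exponent is precisely there to absorb both this lower-order correction and the $C_k/(1-\lambda_i/(k+1))$ inefficiency in the per-interval bound, so for any fixed $\gamma>0$ the bound claimed in the corollary holds for all $\delta$ sufficiently small.

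The approach is essentially mechanical once the dyadic observation is made, which is why the authors describe it as immediate from the proof of Lemma~\ref{lem2}: that proof also isolates the dyadic scales $(2^{-i-1},2^{-i}]$ near the origin, since for $k$ workers to avoid cleaning the origin they must encounter at least $k$ dust particles in each such scale, and the same independent-Poisson computation produces the constant $k/(2\log 2)$. The only real subtlety is bookkeeping of the lower-order terms to show they truly are absorbed by the arbitrary $\gamma$; there is no substantive obstacle.
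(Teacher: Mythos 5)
Your proof is correct and is essentially the computation the paper itself invokes when it says the corollary follows immediately from the proof of Lemma~\ref{lem2}: failure of $k$-$(\delta)$blocking forces at least $k$ Poisson points in each of the $\sim \log_2(1/\delta)$ disjoint doubling intervals, the per-interval probability is at most $\lambda^k/k!$, and the independent product yields $\exp\left(-k(\log \delta)^2(1+o(1))/(2\log 2)\right)$, with $x$ playing no role after translation. The only (harmless) slip is that $J_0=(1/2,1]$ is not an admissible interval, since its right endpoint $1$ lies outside the open interval $(\delta,1)$, so your product should run over $i\ge 1$; this drops one term from $\sum_i (i+1)$ and is absorbed by the $(1-\gamma)$ slack exactly as your other lower-order corrections are.
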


\noindent
{\it Remark:} in the above result the variable $x$ plays no role, but it is formulated as it is for application in Section 6.

\noindent Here is a further extensions of Lemma \ref{lem2} onto the case where the  initial
locations of $k$ workers may differ, in general. 
\begin{lemma}\label{lem3}
Let $\delta \in (0,1)$. 
Assume that, in conditions of Lemma \ref{lem2},
the initial locations of $k$ workers on the halfline are
$$
0<x=x_{-k+1}\le x_{-k+2}\le\ldots \le x_0
$$
where $x_0\le (2-\delta )x$.  Let ${\bf x} = (x_{-k+1},x_{-k+2},\ldots,x_0)$ and $P_k({\bf x})$ be the probability that, with this initial configuration, the dust particle at the origin will be never removed. 
Then
\begin{equation}\label{F3}
\lim_{x\to 0} \frac{\log P_k({\bf x})}{\log^2 x} = -\frac{k}{2\log 2}, 
\end{equation} 
where the convergence is uniform 
in $x_{-k+i}/x \in [1,(2-\delta))$.
\end{lemma}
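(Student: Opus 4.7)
The lemma extends Lemma \ref{lem2} from $k$ co-located workers to $k$ workers spread within $[x,(2-\delta)x]$. The key structural observation is that $x_0\le(2-\delta)x$ confines the entire initial worker cloud to the first dyadic interval $(x,2x)$, so the dyadic-scale analysis behind Lemma \ref{lem2} is insensitive to the precise worker positions at leading $\log^2 x$ order. My plan is to sandwich $P_k(\mathbf{x})$ between two quantities whose asymptotics come from Lemma \ref{lem2}, treating the lower bound by a monotone coupling and the upper bound by adapting the dyadic ``blocking'' argument.

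For the lower bound I will establish the monotonicity $P_k(\mathbf{x})\ge P_k(x,x,\ldots,x)$, after which Lemma \ref{lem2} yields the matching asymptotic $\exp(-k\log^2 x/(2\log 2)(1+o(1)))$. I couple the spread-out and concentrated systems by sharing the rate-one Poisson dust on $(x,\infty)$ and the $k$ independent clock processes, pairing the $i$th worker in each system. By induction on successive clock rings, I will show that each paired worker in the spread-out system is at least as far from the origin as its partner in the concentrated system. The induction step exploits that the closest-dust rule is monotone in a single worker's position: moving a worker rightward (with the other workers and the dust environment fixed) either selects the same dust particle as target or a particle strictly further to the right, while the relative appeal of the origin only decreases. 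Consequently, escape of the concentrated system implies escape of the spread-out system.

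For the upper bound I adapt the proof of Lemma \ref{lem2} directly rather than attempting a cross-support coupling. That proof shows the event ``origin never reached'' forces each dyadic interval $J_i=(2^i x,2^{i+1}x]$ for $i=0,\ldots,I_0$ (with $2^{I_0}x$ of order one) to contain at least $k$ Poisson dust points, for otherwise one of the $k$ workers is pigeonholed into finding the origin as its closest dust at some jump time. Independence of the Poisson counts on disjoint intervals then yields
$$P_k(\mathbf{x})\le\prod_{i=0}^{I_0}{\mathbf P}\bigl(\mathrm{Poisson}(2^i x)\ge k\bigr)\cdot e^{o(\log^2 x)}=\exp\bigl(-k\log^2 x/(2\log 2)(1+o(1))\bigr).$$
The initial configuration $\mathbf{x}$ affects this computation only through the $J_0$ factor, which contributes $O(\log x)=o(\log^2 x)$ uniformly over $\mathbf{x}\in[x,(2-\delta)x]^k$.

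The main obstacle will be making rigorous the pairwise monotone coupling in the lower bound, since ``closest dust'' is not Lipschitz in worker position and a single ring event could in principle re-order which worker is leftmost across the two systems. I expect the fixed gap $\delta>0$ to provide the slack needed for the coupling to persist: while worker clouds remain in $(x,2x)$ the spread-out workers cannot cross below $x$, and once all workers have escaped past $2x$ the two systems agree on the finer dyadic scales, where Lemma \ref{lem2}'s dynamics take over directly. The uniformity claimed in $x_{-k+i}/x\in[1,2-\delta)$ then follows from the uniformity of the ratio $\log^2 x_0/\log^2 x\to 1$ on this compact range and from the fact that all the remaining estimates depend only on the fixed $\delta$.
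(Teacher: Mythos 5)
Your upper half is essentially fine (and genuinely different from the paper, which disposes of the upper bound in one line via the monotone comparison $P_k(\mathbf{x})\le P_k(2x)$ and Lemma \ref{lem2}), modulo one fixable slip: from spread positions $y\le(2-\delta)x$ a first jump can reach dust up to distance $2y<4x$, so $J_0$ need \emph{not} contain $k$ points; you should instead run the pigeonhole over the scales $[2^ix,2^{i+1}x)$ for $i\ge 1$, using that each worker's first entry into $[2^ix,\infty)$ must land in $[2^ix,2^{i+1}x)$ (a jump from $y<2^ix$ to $d\ge 2^{i+1}x$ has $d-y>2^ix>y$, so the origin would be preferred), and these $k$ first-entry particles are distinct; losing the $i=0$ factor costs only $e^{O(|\log x|)}=e^{o(\log^2x)}$. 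The genuine gap is in your lower bound. The pathwise coupling claim --- shared dust on $(x,\infty)$ and shared clocks make the spread system dominate the concentrated one --- is \emph{false} exactly in the regime you flagged, when dust falls in $(x,x_0)$ between the workers. Take $k=2$, concentrated pair at $x$, spread pair at $x$ and $1.9x$ (allowed for $\delta\le 0.1$), shared dust at $1.05x,\,1.5x,\,2.05x,\,2.9x,\,4.05x,\,5.0x$ and dense thereafter, clocks ringing alternately with worker $2$ first. The concentrated system escapes: $x\to 1.05x$, $x\to 1.5x$, $1.05x\to 2.05x$, $1.5x\to 2.9x$, $2.05x\to 4.05x$, $2.9x\to 5.0x,\dots$. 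In the spread system worker $2$ first grabs $2.05x$, then jumps \emph{left} to $1.5x$ (distance $0.55x$, nearer than $2.9x$ at $0.85x$), after which worker $1$, sitting at $1.05x$, sees nearest dust $2.9x$ at distance $1.85x$ versus the origin at $1.05x$ and jumps to the origin. So escape of the concentrated system does not imply escape of the spread one; the domination invariant breaks at the left jump, $\delta$ provides no slack, and the two systems never ``re-agree past $2x$'' since their surviving dust sets already differ. This is a positive-probability dust configuration, and the interleaving event has probability of order $x_0-x$, astronomically larger than the target $e^{-k\log^2x/(2\log 2)(1+o(1))}$, so it cannot be absorbed into an error term by a union bound either.

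The repair shows why the paper proves the lower bound the way it does. The monotonicity you want (Monotonicity property 1 in the Appendix) is valid only when all dust lies strictly to the right of all workers, so you must first intersect with $\{\mbox{no dust in } (x,x_0]\}$, whose probability $e^{-(x_0-x)}\to 1$ is harmless for a lower bound. But then the dominated comparison system is ``$k$ workers at $x$, dust at the origin and Poisson on $(x_0,\infty)$'', which has a deterministic dust-free gap and is \emph{not} covered by the statement of Lemma \ref{lem2}; you are forced to reopen the proof of Lemma \ref{lem2} and enlarge its first vacancy event $E_1$ from ``no dust in $(x,2B_1x)$'' to ``no dust in $(x_0,2B_1x)$'' --- which is precisely the paper's one-line proof of Lemma \ref{lem3}, and is where the hypothesis $x_0\le(2-\delta)x$ is actually used, guaranteeing that the $k$ particles of $H_1$ in $(2B_1x,2x)$ still beat the origin for every worker. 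So your plan cannot be completed by citing Lemma \ref{lem2} as a black box plus a coupling: the lower bound requires re-running the construction inside Lemma \ref{lem2}'s proof with the first step modified.
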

\noindent The following useful corollary shows that the logarithmic tail asymptotics for the probability for all workers to escape to infinity without visiting the origin is mostly due to the corresponding dust and clock environments, and given that, the probability to escape is a power function of the initial value $x_0$.
 
\begin{cor} \label{corrr} In conditions of the previous Lemma \ref{lem3},  
let  $Z$ be the conditional (upon the dust environment) probability that the $k$ workers will never return to the origin. Then there exists a constant $M>0$ such that, for any small $\widehat{\delta}$ and
uniformly in $x_{-k+i}/x \in [1,(2-\delta))$, 
\begin{align*}
{\mathbf P} (Z> x_0^M) \ge 
e ^{-k (1+ \delta) \log^2(x_0) / 2\log 2}, 
\end{align*}
for all $x_0 $ small enough.
\end{cor}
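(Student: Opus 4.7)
The plan is to refine the proof of Lemma \ref{lem3} so as to separate the dust-environment contribution from the clock-ordering contribution to the escape probability. Concretely, we shall exhibit a dust-environment event $E^\star$ with $\mathbf{P}(E^\star) \ge e^{-k(1+\delta)\log^2 x_0/(2\log 2)}$ and show that on $E^\star$ the conditional escape probability $Z$ is at least $x_0^M$ for a suitable constant $M = M(k)$. Since $E^\star \subseteq \{Z > x_0^M\}$, the claim then follows immediately.

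\emph{Construction of $E^\star$.} Partition $(x_0, 1)$ into the doubling intervals $I_j = (2^j x_0, 2^{j+1} x_0)$ for $j = 0, 1, \ldots, n-1$, where $n = \lceil \log_2(1/x_0) \rceil$. Define $E^\star$ as the event that each $I_j$ contains exactly $k$ Poisson dust points, all of which lie in a specific sub-interval of $I_j$ (for instance the upper half, minus a small buffer near the endpoint $2^{j+1}x_0$) whose length is a constant fraction of $2^j x_0$. By independence across $j$ and the Poisson formula $\mathbf{P}(\text{$k$ points in interval of mean $\lambda$}) = \lambda^k e^{-\lambda}/k!$,
\[
\log \mathbf{P}(E^\star) \ = \ \sum_{j=0}^{n-1}\bigl[\,k\log (2^j x_0) - \log(k!) + O(1)\,\bigr] \ = \ -\,\frac{k\log^2 x_0}{2\log 2} \,+\, O\bigl(\log(1/x_0)\bigr).
\]
Since $\log(1/x_0) = o(\log^2 x_0)$ as $x_0 \to 0$, this yields $\mathbf{P}(E^\star) \ge e^{-k(1+\delta)\log^2 x_0/(2\log 2)}$ for all sufficiently small $x_0$, uniformly in $x_{-k+i}/x \in [1, 2-\delta)$, since the initial worker positions affect only finitely many of the intervals $I_j$.

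\emph{Lower bound for $Z$ on $E^\star$.} The sub-interval structure imposed by $E^\star$ ensures that at every stage of the dynamics the workers can be made to traverse the intervals $I_j$ as a coordinated cluster: whenever a designated worker's clock rings next, its nearest remaining dust particle lies at distance strictly less than its position from the origin, so it advances outward rather than returning to origin. Prescribing an explicit ``safe'' ordering of clock rings (for example, the trailing worker rings next at every stage), the cluster crosses all intervals $I_0, \ldots, I_{n-1}$ and reaches distance at least $1$ after $N = O(kn) = O(k\log(1/x_0))$ rings, past which escape is almost sure by a standard Poisson law-of-large-numbers estimate for the rate-$1$ dust field on $(1,\infty)$. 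Because the first $N$ rings among $k$ independent rate-$1$ Poisson clocks occur in any specified order with probability exactly $k^{-N}$, we have
\[
\mathbf{P}(\text{prescribed ordering} \mid \text{dust}) \ \ge \ k^{-N} \ = \ e^{-N\log k} \ \ge \ x_0^{Ck\log k/\log 2}
\]
for a universal constant $C$. Taking $M := Ck\log k/\log 2 + 1$ gives $Z \ge x_0^M$ on $E^\star$, and combining with the bound on $\mathbf{P}(E^\star)$ completes the proof.

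\emph{Main obstacle.} The delicate point is the design of the sub-interval structure inside each $I_j$ so that both requirements are met simultaneously: the bound on $\mathbf{P}(E^\star)$ needs the sub-intervals to be long enough (a constant fraction of $|I_j|$), while the cluster-advance dynamics needs them short enough and positioned near the upper end of $I_j$ so that, at the transition from $I_j$ to $I_{j+1}$, the trailing worker's position is still within a factor $2$ of the leading available dust in $I_{j+1}$. Verifying that a single choice of constants works uniformly in $j$ and in the initial configurations of Lemma \ref{lem3} is essentially the same bookkeeping already performed in the proof of Lemma \ref{lem3}; the corollary is obtained by keeping the clock-probability factor $k^{-N}$ visible rather than absorbing it into the overall escape probability estimate.
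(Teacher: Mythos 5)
Your high-level architecture is exactly the paper's: the paper proves this corollary in one line by observing that in the lower-bound proofs of Lemmas \ref{lem2} and \ref{lem3} the events $G$, $E_n$, $H_n$ depend only on the dust environment (with joint probability $e^{-k(1+o(1))\log^2 x_0/(2\log 2)}$), while the ordering events $J_n$ depend only on the clocks and cost $p^m=x_0^{O(1)}$ with $m\approx |\log x_0|/\log 2$; your $E^\star$ versus prescribed-ring-order split is the same idea, and your $k^{-N}$ ordering bound is a legitimate substitute for the paper's $p^m$. However, your construction of $E^\star$ has a genuine defect that your ``main obstacle'' paragraph does not resolve but actually runs into: sub-intervals whose length is a \emph{constant fraction} of the scale provably cannot work. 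Write the window at scale $j$ as $W_j=(\alpha_j 2^j x_0,\ \beta_j 2^j x_0)$ with $1\le \alpha_j<\beta_j\le 2$ and $\beta_j-\alpha_j\ge c$. Under your round-robin dynamics the trailing worker may sit at the bottom of $W_j$ when it rings while the lowest \emph{remaining} particle may be the top one of $W_{j+1}$, so avoiding a jump to the origin forces $\beta_{j+1}2^{j+1}\le 2\alpha_j 2^j$, i.e. $\beta_{j+1}\le \alpha_j\le \beta_j-c$. Thus $\beta_j$ must decrease by at least $c$ per scale while remaining in $[1,2]$, which is impossible beyond $\lfloor 1/c\rfloor$ of the $\sim\log_2(1/x_0)$ scales; so no ``single choice of constants uniformly in $j$'' exists. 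The paper's construction is designed around precisely this: it takes $W_n=(2^nB_nx,\,2^nB_{n-1}x)$ with $B_n=\prod_{i\le n}(1-\varepsilon_i)$ and $\varepsilon_n=c/n^\gamma$, $\gamma>2$, so that $\sup W_{n+1}$ equals \emph{exactly} $2\inf W_n$ (strict inequality then coming from interior positions), and the shrinking widths cost only $(\prod_1^m\varepsilon_n)^k=e^{o(\log^2 x_0)}$, which is absorbed into the $(1+\delta)$ slack.

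Two further steps fail as written. First, anchoring the windows at $x_0$ destroys the claimed uniformity over the Lemma \ref{lem3} configurations: when $x_0$ is close to $(2-\delta)x$, your first window, lying in the upper half of $(x_0,2x_0)$, sits above $2x$, so the leftmost worker at $x$ finds the origin strictly closer at its first ring \emph{whatever} the ordering, and escape fails deterministically; the paper instead keeps the first cluster just below $2x=2x_{-k+1}$ (this is the modification of $E_1$ mentioned in the proof of Lemma \ref{lem3}). Second, the inclusion $E^\star\subseteq\{Z>x_0^M\}$ is false: $Z$ also depends on the dust on $(1,\infty)$, which $E^\star$ leaves unconstrained, and escape from distance of order $1$ is certainly not ``almost sure'' --- indeed $Z=0$ on a positive-probability set of environments with a sufficiently large gap beyond the last window. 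You need to intersect $E^\star$ with an independent, constant-probability event on that remote environment, e.g. $\{Z_1\ge c_k/2\}$ where $Z_1$ is the conditional escape probability from the final cluster, which has probability at least $c_k/2$ since $\mathbf{E}Z_1=c_k>0$ and $Z_1\le 1$; this constant is again absorbed into the $(1+\delta)$ factor, and it is the role played in the paper by the event $G$ together with the inequality $\mathbf{P}(A\mid B)\ge \mathbf{P}(A)-\mathbf{P}(\overline B)$. With these three repairs your argument becomes a correct rewriting of the paper's proof; as it stands, the dynamical core does not go through.
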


\noindent
{\sc Proof of Lemma \ref{lem1}}. We obtain separately the upper and
the lower bounds that are logarithmically equivalent.
\vspace{0.2cm}

\noindent
Write, for short, $P(x)=P_1(x)$. Clearly, $P(x)$ is an increasing function
that 
tends to 1 if $x\to\infty$, and to 0 if $x\to 0$. 
By the total probability formula,
\begin{equation}\label{BF}
P(x) = {\bf P} (D(x)) = \int_0^{x}{\bf  P}(\psi\in dy) {\bf P} (D(x) \ | \ 
\psi=y) = \int_0^xe^{-y}P(x+y) dy
\end{equation}
where $\psi$ is the distance from $x$ to the first dust particle on the right.  
Indeed, for the event of interest to happen, there should be at least
one dust particle within $(x,2x)$.
Recall that, for $x\in (0,1)$, we have
\begin{equation}\label{ex}
x/2 < 1-e^{-x} < x.
\end{equation}
\noindent 
{\sc Upper bound.} 
By monotonicity of $P$ (see Appendix) and by \eqref{BF} and \eqref{ex},
$P(x) \le \min (1,x) P(2x)$.  Let 
$m\equiv m_x=\min\{n:  2^nx\ge 1/2\}$, then  $m=-\frac{\log x}{\log 2}+
O(1)$, as $x\to 0$. 
Using the induction argument, we get:
\begin{eqnarray*}
P(x) &\le & x \cdot P(2x) \le x\cdot 2x\cdot \ldots \cdot 2^{m-1}x \cdot P(2^mx) 
\le x^m 2^{m(m-1)/2} \\
&=& \exp \left( m \log x + (1+o(1))\log 2\cdot m(m-1)/2 \right)\\
&=& \exp \left(-(1+o(1))\frac{\log^2 x}{2\log 2} \right), 
\end{eqnarray*}
as $x\to 0$.
\vspace{0.2cm}

\noindent
{\sc Lower bound.} We again use the monotonicity property of $P$ and \eqref{BF} and \eqref{ex}. For any $0<x,\varepsilon < 1/2$.
\begin{eqnarray*}
P(x) &\ge &  \int_{x(1-2\varepsilon)}^x e^{-y} P(x+y) dy \\
&\ge & 2\varepsilon x e^{- 2x(1-\varepsilon)}P(2x(1-\varepsilon )).
\end{eqnarray*}
Let $\varepsilon_n = \frac{c}{n^{\gamma}}$ be a decreasing to 0 sequence,
where $0<c<1/2$ and $\gamma >2$.
Using consequently $\varepsilon_1,\varepsilon_2,\ldots$ in place of
$\varepsilon$, we get:
\begin{eqnarray*}
P(x) &\ge & 2\varepsilon_1x e^{-2x(1-\varepsilon_1)}
P(2x(1-\varepsilon_1))\\
&\ge &2 \varepsilon_1x e^{-2x(1-\varepsilon_1)} \cdot
4\varepsilon_2 x(1-\varepsilon_1) 
e^{-4x(1-\varepsilon_1)(1-\varepsilon_2)}
P(4x(1-\varepsilon_1)(1-\varepsilon_2)) \ge \ldots \\
&\ge & 
(\prod_1^m \varepsilon_i) x^m2^{\frac{m(m+1)}{2}} A_{m-1} 
\exp ( -x\sum_1^m2^iB_i ) P(2^mB_mx) \\
&\ge &
A\frac{c^m}{(m!)^{\gamma}} \exp (m\log x+ \log 2\cdot m(m-1)/2
-x2^{m+1})P(2^mBx),
\end{eqnarray*}
where $A_n=\prod_1^n (1-\varepsilon_i)^i$,
$A=\prod_1^{\infty} (1-\varepsilon_i)^i$, $B_n=\prod_1^n (1-\varepsilon_i)$
and $B=\prod_1^{\infty} (1-\varepsilon_i)$ are strictly positive numbers.
Letting again $m$ be the integer part of $\frac{|\log x|}{\log 2}$, we get, as $x\to 0$,  
$$
P(x) \ge  AP(B/2) e^{-2}\exp \left(-\frac{\log^2x}{2\log 2} (1+o(1))\right) = \exp \left(-\frac{\log^2x}{2\log 2} (1+o(1))\right)
$$
since $2^mx = O(1)$, $c^m=e^{O(|\log x|)}=e^{o(\log^2x)}$ and
$$
(m!)^{\gamma}=\exp (\gamma |\log x| \log |\log x| (1+o(1)))=
\exp (o(\log^2x)).
$$
\vspace{0.2cm}

\noindent
{\sc Proof of Lemma \ref{lem2}}.\\
{\sc Upper bound}.
For the event of interest to occur, we need to have at least $k$ points of
the Poisson dust process to be within $(x,2x)$ and then, by the monotonicity  property 2 (see Appendix),
$$
P_k(x) \le {\mathbf P} (\xi_k \le x) P_k(2x).
$$
Here $\xi_k$ is the distance from $x$ to the $k$th point of the Poisson dust process
on $(x,\infty)$, so $\xi_n$ has Gamma distribution with parameters
$(k,1)$. Let $p_k(x) = {\mathbf P}(\xi_k\le x)$.
Then, for all $x$,
$$
x^k/k! \ge p_k(x) \ge x^ke^{-x}/k! \ .
$$ 
Then we may use the monotonicity properties 1 and 2 (see Appendix) to get, with $m$ as before,
\begin{eqnarray*}
P_k(x)&\le & \frac{x^k}{k!}P_k(2k)\\ 
&\le & \ldots \le x^{km}2^{\frac{km(m-1)}{2}}/(k!)^m \\
&=& \exp \left( -\frac{k\log^2x}{2\log 2} (1+o(1))\right).
\end{eqnarray*}
\vspace{0.2cm}

\noindent
{\sc Lower bound.} We define $\varepsilon_n$ as in the proof of Lemma
\ref{lem1}. 
Let $D_k(x)$ be the event the interest and  
\begin{align*}
G(x,x+c) = \{\mbox{no particles of the dust process within
interval} \ (x,x+c)\}.
\end{align*}

\noindent
Introduce the following events, for $n=1,2,\ldots$:\\
(i) let
\begin{align*}
 E_n = \{\mbox{  there is no dust particles in the interval} \ 
(2^{n-1}B_{n-1}x, 2^nB_nx)\}, 
\end{align*}
the probability of this event is not smaller than
$\exp (-2^nB_nx)$;\\
(ii) let
\begin{align*}
H_n = 
\{\mbox{there are exactly}  \ k \ \mbox{dust particles in the interval} \ 
(2^{n}B_{n}x, 2^nB_{n-1}x) \},
\end{align*} 
the probability of this event is equal to
$x^kK_n^ke^{-K_nx}/k!$ where $K_n= 2^nB_{n-1}\varepsilon_n$;\\
(iii) let 
\begin{align*}
J_n = \{\mbox{the clock of each of the} \ k \ \mbox{ workers rings
exactly once within} \ k \  \\
\mbox{consecutive rings numbered} \ k(n-1)+1, k(n-1)+2,\ldots, k_n\}
\end{align*} 
(we number the rings in order of their appearance),  
the probability of this  
event is not smaller that the probability
$$
p:= {\mathbf P} (X_k \le Y_k)>0
$$
where $X_k$ is the maximum of $k$ i.i.d. exponential-1 r.v.'s, $Y_k$ 
the minimum of other $k$ i.i.d. exponential-1 r.v.'s,
and $X_k$ and $Y_k$ do not depend on each other.
\vspace{0.2cm}

\noindent
Taking $m$ as before, we get: 
\begin{eqnarray*}
P_k(x) &=& {\bf P} (D_k(x))\\
&\ge & {\bf P} \left(\cap_1^m (E_n\cap H_n\cap J_n)\right)
\cdot {\bf P}\left(D_k(2^mB_mx)
\cap G (2^mB_mx, 2^mB_{m-1}x)\right)\\
&\ge & e^{-x\sum_1^m2^nB_n} \cdot \frac{x^{km}2^{m(m-1)/2}B^{km}
\left(\prod_1^m\varepsilon_n\right)^ke^{-\sum_1^m 2^nB_{n-1}\varepsilon_nx}}{\left(k!\right)^m} \\
&\times & p^m  \left(P(B/2)-1+e^{-2^m\varepsilon_mx}\right).
\end{eqnarray*}
Here\\
(1) We apply the first monotonicity property from Appendix: 
we observe that after removing $mk$ particles,
all $k$ workers are located within the interval $(2^mB_mx, 2^mB_{m-1}x)$ and
there are dust particles only at the origin and to the right of this interval. Therefore, in we move all workers to the smallest point of this interval,
the probability of interest becomes smaller, so the first inequality follows.\\
(2) Given that, we have all $k$ points at $2^mB_mx$ and are interested in
the probability of excaping of all workers to infinity given there is no 
dust particles within $(2^mB_mx, 2^mB_{m-1}x)$. Then we apply basic
inequalities: for events $A$,$B$ of positive probabilities,
$${\mathbf P}(A \ | \ B) \ge {\mathbf P} (AB) \ge {\mathbf P}(A) -
{\mathbf P} (\overline{B}), \ \  \mbox{where} \ \ \overline{B} \ \ 
\mbox{is the complement of} \ \ B.
$$

\noindent
Similarly to that in the proof of the lower bound in the previous lemma, we have a number
of inequalities: 
\begin{eqnarray*}
&& \exp (-x\sum_1^m2^nB_n)  \ge  \exp (-2B)>0,\\
&& (k!)^m = \exp (O(|\log x| \log |\log x|) = \exp (o(\log x)^2),\\
&& B^{km} = \exp (O(|\log x|),\\
&& (\prod_1^m \varepsilon_n)^k = \exp (O(|\log x| \log |\log x|) = \exp (o(\log x)^2),\\ 
&& p^m=\exp (O(|\log x|))
\end{eqnarray*}
 and then 
\begin{align*}
x^{km}2^{km(m-1)/2} &=& \exp (-\log^2x/\log 2 + \log^2x/2\log 2 (1+o(1)))\\
&=& \exp (-(1+o(1))\log^2x/2\log 2)
\end{align*}
and, for any monotone function $h(n)\to\infty$, $h(n)=o(n)$,
\begin{align*}
\sum_1^m 2^nB_{n-1}\varepsilon_nx \le \sum_1^{h(m)} 2^nx +
\varepsilon_{h(m)} 2^{m+1}x \le 2^{h(m)+1}x + 2\varepsilon_{h(m)} \to 0.
\end{align*}

Since 
$$
e^{-2^m\varepsilon_mx}\ge e^{-\varepsilon_m/2}\to 1,
$$
the result follows. 
\vspace{0.2cm}

\noindent
{\sc Proof of Lemma \ref{lem3}}. 
The probability of interest is upper-bounded by $P_k(2x)$, so we may use the upper bound
from the previous lemma. The proof of the lower bound differs only in  the first stepl
where the event $E_1$ is replaced by a bigger event, which makes the
probability bigger.

{\sc Proof of Corollary \ref{corrr}} follows from the proofs of the lemmas \ref{lem2} and \ref{lem3}, since the events $G$, $E_n$ and $H_n$ relate to the dust environment and events $J_n$ to the clock environment.

\section*{Appendix}

\subsection*{Monotonicity properties}

Consider the following deterministic model. 
Assume the state space is halfline $[0,\infty)$ and there are $k$ workers there,
located initially at points $0 < x_{-k+1}\le x_{-k+2}\le \ldots \le x_0$. Let
${\bf x} = (x_{-k+1},\ldots,x_0)$, with worker $1$ located at point $x_{-k+1}$, worker $2$ at point $x_{-k+2}$, etc., worker $k$ at point $x_0$. 
\vspace{0.2cm}

\noindent
Assume next that there are infinitely
many dust particles located at points $0=d_0<d_1\le d_2 \le \ldots$ where
$d_1>x_0$, so all workers are located initially between $0$ and $d_1$.
Let ${\bf d} = (d_1,d_2,\ldots)$ be an infinite vector (that does not include $d_0=0$). 
\vspace{0.2cm}

\noindent
Assume further that there is given a fixed (predefined) order of moves of workers 
$\{w_n\}_{n\ge 1}$ that says that, first, worker $w_1$ jumps to its closest
dust particle and removes it, then worker $w_2$ jumps and removes its
closest particle etc. If a worker finds two particles at the same location,
it chooses any of them. If a worker finds that he is located, say, at point $y$
and there are two closest particles at points $0$ and $2y$, then it chooses
point $0$. We assume that each number $1,2,\ldots,k$ appears in the sequence
$\{w_n\}$ infinitely often. 
\vspace{0.2cm}

\noindent
Assume it takes a unit of time per move.  
Let $D({\bf x}, {\bf d})$ represent the event that all workers escape
to infinity without visiting $0$ (in other words, that by time $\infty$
all dust particles but the one at the origin have been removed).
\vspace{0.2cm}

\noindent
Then we have the following elementary monotonicity properties.
\vspace{0.2cm}

\noindent
{\bf Monotonicity property 1.} Let $\widehat{\bf x}=(\widehat{x}_{-k+1},
\ldots,\widehat{x}_0)$ be another initial location of $k$ workers.
Assume that $\widehat{{\bf x}}\ge {\bf x}$ component-wise and that
$\widehat{x}_0<d_1$. Then if event 
$
D({\bf x},{\bf d})$
occurs, then event $ D(\widehat{\bf x},{\bf d}),
$ occurs too. 
\vspace{0.2cm}

\noindent
Indeed, when any worker, say, $j$, moves first time, it finds that it is more
likely to move to the right from location
$\widehat{x}_{-k+j}$ than from location $x_{-k+j}\le \widehat{x}_{-k+j}$.
\vspace{0.2cm}

\noindent
{\bf Monotonicity property 2.} Let $c>0$. Let $\widetilde{x}_{-k+j}=x_{-k+j}+c$ and $\widetilde{d}_i=d_i+c$, for all $1\le j \le k$ and all $i\ge 1$.
Then if event 
$
D(\widetilde{\bf x},\widetilde{\bf d})$ occurs, then event $D({\bf x}, {\bf d})
$ occurs too.
\vspace{0.2cm}

\noindent
This property can be easily verified step by step.  
As a corollary, we have the following. Let $P_k({\bf x})$ be defined as in the previous Section. Then
$$
P_k({\bf x})\le P_k({\bf y}), \quad \mbox{for all} \quad 0<{\bf x}\le {\bf y}
\ \mbox{and} \ k=1,2,\ldots.
$$

\noindent
{\bf Acknowledgement}. The authors thank Takis Konstantopoulos for his
valuable comments.

\end{document}